\numberwithin{equation}{section}
\newtheorem{thm}{Theorem}[section]
\newtheorem{prop}[thm]{Proposition}
\newtheorem{lemm}[thm]{Lemma}
\newtheorem{cor}[thm]{Corollary}
\theoremstyle{remark}
\newtheorem{rem}{Remark}[section]
\newtheorem{defn}{Definition}
\newcommand{\BBB}{\mathbb}
\newcommand{\R}{{\BBB R}}
\newcommand{\Z}{{\BBB Z}}
\newcommand{\N}{{\BBB N}}
\newcommand{\C}{{\BBB C}}
\newcommand{\ZZ}{\mathcal{Z}}
\newcommand{\FT}{\mathcal{F}}
\newcommand{\ee}{\mbox{\boldmath $1$}}
\newcommand{\al}{\alpha}
\newcommand{\be}{\beta}
\newcommand{\ga}{\gamma}
\newcommand{\p}{\partial}
\newcommand{\kuuhaku}{\text{}}
\newcommand{\F}{\mathcal{F}}
\title[WP and scattering for system of qDNLS]{
}
\author[H. Hirayama]{
}
\address[H. Hirayama]{
}
\email[H. Hirayama]{m08035f@math.nagoya-u.ac.jp}
\subjclass[2010]{35Q55, 35B65}
\keywords{Schr\"odinger equation, well-posedness, Cauchy problem, scaling critical, Bilinear estimate, bounded $p$-variation}
\begin{document}
\begin{center}
{\bf \normalsize WELL-POSEDNESS  AND SCATTERING FOR A SYSTEM OF QUADRATIC DERIVATIVE
NONLINEAR SCHR\"ODINGER EQUATIONS WITH LOW REGULARITY INITIAL DATA}

\smallskip \bigskip {\sc Hiroyuki Hirayama}

\smallskip {\footnotesize Graduate School of Mathematics, Nagoya University\\
Chikusa-ku, Nagoya, 464-8602, Japan}
\end{center}
\vspace{-2ex}
\begin{abstract}
In the present paper, we consider the Cauchy problem of a system of quadratic derivative nonlinear 
Schr\"odinger equations which was introduced by M. Colin and T. Colin (2004) as a model of laser-plasma interaction. 
The local existence of the solution of the system in the Sobolev space $H^{s}$ for $s>d/2+3$ is proved by M. Colin and T. Colin.  
We prove the well-posedness of the system with low regularity initial data. 
For some cases, we also prove the well-posedness and the scattering at the scaling critical regularity  
by using $U^{2}$ space and $V^{2}$ space which are applied to prove 
the well-posedness and the scattering for KP-II equation at the scaling critical regularity by Hadac, Herr and Koch (2009).  
\end{abstract}
\maketitle
\setcounter{page}{001}


\section{Introduction\label{intro}}
We consider the Cauchy problem of the system of Schr\"odinger equations:
\begin{equation}\label{NLS_sys}
\begin{cases}
\displaystyle (i\p_{t}+\alpha \Delta )u=-(\nabla \cdot w )v,\hspace{2ex}(t,x)\in (0,\infty )\times \R^{d} \\
\displaystyle (i\p_{t}+\beta \Delta )v=-(\nabla \cdot \overline{w})u,\hspace{2ex}(t,x)\in (0,\infty )\times \R^{d} \\
\displaystyle (i\p_{t}+\gamma \Delta )w =\nabla (u\cdot \overline{v}),\hspace{2ex}(t,x)\in (0,\infty )\times \R^{d} \\
(u(0,x), v(0,x), w(0,x))=(u_{0}(x), v_{0}(x), w_{0}(x)),\hspace{2ex}x\in \R^{d}
\end{cases}
\end{equation}
where $\al$, $\be$, $\ga\in \R\backslash \{0\}$ and the unknown functions $u$, $v$, $w$ are $d$-dimensional complex vector valued. 
The system (\ref{NLS_sys}) was introduced by Colin and Colin in \cite{CC04} 
as a model of laser-plasma interaction.  
(\ref{NLS_sys}) is invariant under the following scaling transformation:
\begin{equation}\label{scaling_tr}
A_{\lambda}(t,x)=\lambda^{-1}A(\lambda^{-2}t,\lambda^{-1}x)\ \ (A=(u,v,w) ), 
\end{equation}
and the scaling critical regularity is $s_{c}=d/2-1$. 
The aim of this paper is to prove the well-posedness and the scattering of (\ref{NLS_sys}) 
in the scaling critical Sobolev space. 

First, we introduce some known results for related problems. 
The system (\ref{NLS_sys}) has quadratic nonlinear terms which contains a derivative. 
A derivative loss arising from the nonlinearity makes the problem difficult. 
In fact, Mizohata (\cite{Mi85}) proved that a necessary condition for the $L^{2}$ well-posedness of the problem:
\[
\begin{cases}
i\partial_{t}u-\Delta u=b_{1}(x)\nabla u,\ t\in \R ,\ x\in \R^{d},\\
u(0,x)=u_{0}(x),\ x\in \R^{d}
\end{cases}
\]
is the uniform bound
\[
\sup_{x\in \R^{n},\omega \in S^{n-1},R>0}\left| {\rm Re}\int_{0}^{R}b_{1}(x+r\omega )\cdot \omega dr\right| <\infty.
\]
Furthermore, Christ (\cite{Ch}) proved that the flow map of the Cauchy problem:
\begin{equation}\label{1dqdnls}
\begin{cases}
i\partial_{t}u-\partial_{x}^{2}u=u\partial_{x}u,\ t\in \R,\ x\in \R,\\
u(0,x)=u_{0}(x),\ x\in \R
\end{cases}
\end{equation}
is not continuous on $H^{s}$ for any $s\in \R$. 
While, there are positive results for the Cauchy problem:
\begin{equation}\label{ddqdnls}
\begin{cases}
i\partial_{t}u-\Delta u=\overline{u}(\nabla \cdot \overline{u}),\ t\in \R,\ x\in \R^{d},\\
u(0,x)=u_{0}(x),\ x\in \R^{d} .
\end{cases}
\end{equation}
Gr\"unrock (\cite{Gr}) proved that (\ref{ddqdnls}) is globally well-posed in $L^{2}$ for $d=1$ and 
locally well-posed in $H^{s}$ for $d\geq 2$ and $s>s_{c}$ ($=d/2-1$). 
For more general problem:
\begin{equation}\label{genqdnls}
\begin{cases}
i\partial_{t}u-\Delta u=P(u,\overline{u},\nabla u,\nabla \overline{u}),\ t\in \R,\ x\in \R^{d},\\
u(0,x)=u_{0}(x),\ x\in \R^{d},\\
P\ {\rm is\ a\ polynomial\ which\ has\ no\ constant\ and\ linear\ terms},
\end{cases}
\end{equation}
there are many positive results for the well-posedness 
in the weighted Sobolev space (\cite{Be06}, \cite{Be08}, \cite{Chi95}, \cite{Chi99}, \cite{KPV98}, \cite{St07}). 
Kenig, Ponce and Vega (\cite{KPV98}) also obtained that (\ref{genqdnls}) 
is locally well-posed in $H^{s}$ (without weight)  for large enough $s$ when $P$ has no quadratic terms. 

The Benjamin--Ono equation:
\begin{equation}\label{BOeq}
\partial_{t}u+H\partial_{x}^{2}u=u\partial_{x}u,\ (t,x)\in \R \times \R
\end{equation}
is also related to the quadratic derivative nonlinear Schr\"odinger equation. 
It is known that the flow map of (\ref{BOeq}) is not uniformly continuous 
on $H^{s}$ for $s>0$ (\cite{KT05}). 
But the Benjamin--Ono equation has better structure than the equation (\ref{1dqdnls}). 
Actually,  
Tao (\cite{Ta04}) proved that (\ref{BOeq}) is globally well-posed in $H^{1}$ 
by using the gauge transform. 
Furthermore, Ionescu and Kenig (\cite{IK07}) proved that (\ref{BOeq}) is globally well-posed in 
$H_{r}^{s}$ for $s\geq 0$, where $H_{r}^{s}$ is the Banach space of the all real valued function $f\in H^{s}$. 

Next, we introduce some known results for  systems of quadratic nonlinear derivative Schr\"odinger equations. 
Ikeda, Katayama and Sunagawa (\cite{IKS13}) considered (\ref{NLS_sys}) with null form nonlinearity and obtained the small data global existence and the scattering 
in the weighted Sobolev space for the dimension $d\geq 2$ under the condition $\alpha \beta \gamma (1/\alpha -1/\beta -1/\gamma )=0$. 
While, Ozawa and Sunagawa (\cite{OS13}) gave the examples of the quadratic derivative nonlinearity which causes the small data blow up for a system of Schr\"odinger equations. 
As the known result for (\ref{NLS_sys}), we introduce the work by Colin and Colin (\cite{CC04}). 
They proved that the local existence of the solution of (\ref{NLS_sys}) for $s>d/2+3$. 
There are also some known results for a system of Schr\"odinger equations with no derivative nonlinearity 
(\cite{CCO09_1}, \cite{CCO09_2}, \cite{CO12}, \cite{HLN11}, \cite{HLO11}). 
Our results are an extension of the results by Colin and Colin (\cite{CC04}) and Gr\"unrock (\cite{Gr}).

Now, we give the main results in the present paper. 
For a Banach space $H$ and $r>0$, we define $B_r(H):=\{ f\in H \,|\, \|f\|_H \le r \}$. 
Furthermore, we put $\theta :=\alpha \beta \gamma (1/\alpha -1/\beta -1/\gamma )$ and $\kappa :=(\alpha -\beta )(\alpha -\gamma )(\beta +\gamma )$. 
Note that if $\alpha$, $\beta$, $\gamma \in \R\backslash \{0\}$ and $\theta \geq0$, then $\kappa \neq 0$. 
\begin{thm}\label{wellposed_1}\kuuhaku \\
{\rm (i)} We assume that $\alpha$, $\beta$, $\gamma \in \R\backslash \{0\}$ satisfy
$\kappa \neq 0$ if $d\geq 4$, and $\theta >0$ if $d=2, 3$. 
Then {\rm (\ref{NLS_sys})} is globally well-posed for small data in $\dot{H}^{s_{c}}$. 
More precisely, there exists $r>0$ such that for all initial data $(u_{0}, v_{0}, w_{0})\in B_{r}(\dot{H}^{s_{c}}\times \dot{H}^{s_{c}}\times \dot{H}^{s_{c}})$, there exists a solution
\[
(u,v,w)\in \dot{X}^{s_{c}}([0,\infty ))\subset C([0,\infty );\dot{H}^{s_{c}})
\]
of the system {\rm (\ref{NLS_sys})} on $(0, \infty )$. 
Such solution is unique in $\dot{X}_{r}^{s_{c}}([0,\infty ))$ which is a closed subset of $\dot{X}^{s_{c}}([0,\infty ))$ 
{\rm (see (\ref{Xs_norm}) and (\ref{Xrs_norm}))}. 
Moreover, the flow map
\[
S_{+}:B_{r}(\dot{H}^{s_{c}}\times \dot{H}^{s_{c}}\times \dot{H}^{s_{c}})\ni (u_{0},v_{0},w_{0})\mapsto (u,v,w)\in \dot{X}^{s_{c}}([0,\infty ))
\]
is Lipschitz continuous. \\
{\rm (ii)} The statement in {\rm (i)} remains valid if we replace the space $\dot{H}^{s_{c}}$, $\dot{X}^{s_{c}}([0,\infty ))$ and $\dot{X}_{r}^{s_{c}}([0,\infty ))$ 
by $H^{s}$, $X^{s}([0,\infty ))$ and $X_{r}^{s}([0,\infty ))$ for $s\geq s_{c}$. 
\end{thm}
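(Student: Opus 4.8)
The plan is to prove Theorem~\ref{wellposed_1} by a contraction mapping argument in the adapted function spaces $\dot X^{s_c}$ (built from $U^2$ and $V^2$ spaces associated to the Schr\"odinger propagators $e^{it\al\Delta}$, $e^{it\be\Delta}$, $e^{it\ga\Delta}$, restricted to $[0,\infty)$), following the Hadac--Herr--Koch framework. First I would record the standard linear estimates in these spaces: the homogeneous estimate $\|e^{it\al\Delta}f\|_{\dot X^{s_c}}\lesssim \|f\|_{\dot H^{s_c}}$, and the Duhamel (inhomogeneous) estimate bounding the Duhamel term in $\dot X^{s_c}$ by the $\dot Y^{s_c}$-type norm (or the $L^1_t\dot H^{s_c}_x$-type norm, using the $U^2/V^2$ duality) of the forcing term. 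Then the whole problem reduces to proving the trilinear/bilinear estimates for the nonlinearities $(\na\cdot w)v$, $(\na\cdot\overline w)u$ and $\na(u\cdot\overline v)$: schematically, one needs
\[
\big\|(\na\cdot w)\,v\big\|_{\dot Y^{s_c}} \lesssim \|w\|_{\dot X^{s_c}}\|v\|_{\dot X^{s_c}},
\]
and the two analogous estimates, in the dual norm appropriate for the Duhamel map; the resolution of the derivative is what forces the hypotheses on $\al,\be,\ga$.

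The key step — and the main obstacle — is the bilinear estimate that controls the derivative loss. After a Littlewood--Paley decomposition, one is led to estimate, for dyadic frequencies $N_1\ge N_2\ge N_3$, expressions like $N_1\,\| P_{N_1}w\,P_{N_2}v\|$ against a product of localized norms, and the derivative $N_1$ must be absorbed by a gain coming from transversality of the three characteristic surfaces $\tau=-\al|\x|^2$, $\tau=-\be|\x|^2$, $\tau=-\ga|\x|^2$. This is exactly where the resonance function $\al|\x_1|^2-\be|\x_2|^2-\ga|\x_3|^2$ (with $\x_1=\x_2+\x_3$) enters: the quantity $\te=\al\be\ga(1/\al-1/\be-1/\ga)$ measures the non-degeneracy of this resonance, and $\ka=(\al-\be)(\al-\ga)(\be+\ga)$ controls a Jacobian in the relevant change of variables. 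For $d\ge 4$ the bilinear Strichartz / $L^2$-based estimates suffice under $\ka\ne0$; for $d=2,3$ the summation over frequencies is more delicate and one needs the stronger condition $\te>0$ to close the estimate (this dichotomy is presumably where the hypotheses of part~(i) come from, and I would isolate the worst interaction — the high-high-to-low and high-low-to-high cases — and handle them with an interpolation between the $L^2$ bilinear estimate and an $L^\infty$-type bound).

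Granting the multilinear estimates, the contraction argument is routine: define the map $\Phi(u,v,w)$ by the Duhamel formulas with the given initial data, show that for $r$ small enough $\Phi$ maps the ball $\dot X^{s_c}_r([0,\infty))$ of radius $Cr$ into itself and is a contraction there (the nonlinearity being quadratic, the radius can be taken proportional to $\|(u_0,v_0,w_0)\|_{\dot H^{s_c}}$), so a unique fixed point exists; Lipschitz dependence on the data follows from the same estimates applied to the difference of two solutions. The embedding $\dot X^{s_c}([0,\infty))\subset C([0,\infty);\dot H^{s_c})$ is a property of the $V^2$-based spaces, and scattering follows because the Duhamel integral converges in $\dot H^{s_c}$ as $t\to\infty$ (again a consequence of the $\dot Y^{s_c}$ bound on the nonlinearity).

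Finally, for part~(ii) — the statement to be proved here — the argument is essentially identical but technically lighter, since $s\ge s_c$ gives extra room. The plan is to replace the homogeneous spaces $\dot H^s$, $\dot X^s$, $\dot X^s_r$ by their inhomogeneous counterparts $H^s$, $X^s$, $X^s_r$ (where $\LR{\na}^s$ replaces $|\na|^s$), and to observe that all the linear estimates carry over verbatim. For the multilinear estimates one uses $\LR{\x_1}^s\lesssim \LR{\x_2}^s\LR{\x_3}^s$ together with $\LR{\x}\sim\LR{\x}$ bounds; writing $\LR{\x_1}^s = \LR{\x_1}^{s_c}\LR{\x_1}^{s-s_c}$ and distributing the extra $s-s_c\ge0$ derivatives onto the highest-frequency factor, one reduces to the already-established scaling-critical bilinear estimates (the low-frequency part is harmless since $\LR{\x}\ge 1$, so no summation at the bottom end is needed — this is the one place where the inhomogeneous case is strictly easier than the homogeneous one). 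With these estimates in hand, the same contraction mapping in $X^s_r([0,\infty))$ produces the unique solution, the Lipschitz flow map, the embedding into $C([0,\infty);H^s)$, and scattering, exactly as in part~(i). I do not anticipate any genuinely new obstacle in~(ii); the only care needed is bookkeeping of the inhomogeneous weights, and checking that the range of admissible $\al,\be,\ga$ is unchanged (it is, because the resonance and Jacobian considerations are frequency-scale invariant and unaffected by the $\LR{\,\cdot\,}$ truncation).
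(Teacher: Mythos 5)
Your proposal follows essentially the same route as the paper: a contraction in $U^2/V^2$-based spaces, with the Duhamel term controlled via the $U^2$--$V^2$ duality by trilinear dyadic estimates, which are in turn proved by splitting into low/high modulation (where the resonance identity shows the conditions $\kappa\neq 0$ for separated frequencies and $\theta>0$ for comparable frequencies in $d=2,3$ force a large modulation) combined with bilinear Strichartz bounds, and with part~(ii) reduced to the critical estimate by splitting off the unit frequency ball. The one small inaccuracy is your attribution of $\kappa\neq 0$ to a Jacobian: in the paper it is the non-vanishing of the pairwise sums $(\sigma_i+\sigma_j)$ that guarantees the modulation lower bound $\max_j|\tau_j+\sigma_j|\xi_j|^2|\gtrsim N_{\max}^2$ via completing the square, while the Jacobian in the bilinear $L^2$ estimate is controlled solely by the frequency separation $L\ll H$.
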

\begin{rem}
Due to the time reversibility of the system (\ref{NLS_sys}), the above theorems also hold in corresponding intervals $(-\infty, 0)$. 
We denote the flow map with respect to $(-\infty ,0)$ by $S_{-}$. 
\end{rem}
\begin{cor}\label{sccat}\kuuhaku \\
{\rm (i)} We assume that $\alpha$, $\beta$, $\gamma \in \R\backslash \{0\}$ satisfy $\kappa \neq 0$ if $d\geq 4$, and $\theta >0$ if $d=2, 3$. 
Let $r>0$ be as in Theorem~\ref{wellposed_1}. 
For every $(u_{0},v_{0},w_{0})\in B_{r}(\dot{H}^{s_{c}}\times \dot{H}^{s_{c}}\times \dot{H}^{s_{c}})$, there exists 
$(u_{\pm},v_{\pm},w_{\pm})\in \dot{H}^{s_{c}}\times \dot{H}^{s_{c}}\times \dot{H}^{s_{c}}$ 
such that 
\[
\begin{split}
S_{\pm}(u_{0},v_{0},w_{0})&-(e^{it\alpha \Delta}u_{\pm}, e^{it\beta \Delta}v_{\pm}, e^{it\gamma \Delta}w_{\pm}) 
\rightarrow 0\\
&{\rm in}\ \dot{H^{s_{c}}}\times \dot{H}^{s_{c}}\times \dot{H}^{s_{c}}\ {\rm as}\ t\rightarrow \pm \infty. 
\end{split}
\]
{\rm (ii)} The statement in {\rm (i)} remains valid if we replace the space 
$\dot{H}^{s_{c}}$ by $H^{s}$ for $s\geq s_{c}$.  
\end{cor}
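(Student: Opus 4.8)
The plan is to derive scattering as a standard consequence of the functional framework used to prove Theorem~\ref{wellposed_1}, namely the adapted $U^2$/$V^2$ (Hadac–Herr–Koch) spaces. Recall that the solution produced in Theorem~\ref{wellposed_1} lives in $\dot{X}^{s_c}([0,\infty))$, whose norm controls each component in the space $U^2_{\Delta}$ (or $V^2_{\Delta}$) adapted to the corresponding linear propagator $e^{it\alpha\Delta}$, $e^{it\beta\Delta}$, $e^{it\gamma\Delta}$. The key structural fact I would invoke is that a function whose profile $e^{-it\alpha\Delta}u(t)$ lies in $U^2_{\Delta}$ (hence is a right-continuous $\dot H^{s_c}$-valued function of bounded $2$-variation) automatically has a limit in $\dot H^{s_c}$ as $t\to\infty$.

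First I would pass to the Duhamel formulation: writing $u(t)=e^{it\alpha\Delta}u_0+\mathcal{D}_\alpha[(\nabla\cdot w)v](t)$ and similarly for $v$, $w$, I set the scattering profile to be
\[
u_+:=u_0+\int_0^\infty e^{-is\alpha\Delta}(\nabla\cdot w)v(s)\,ds,
\]
with analogous definitions of $v_+$ (from $(\nabla\cdot\overline w)u$) and $w_+$ (from $-\nabla(u\cdot\overline v)$). Then
\[
e^{-it\alpha\Delta}u(t)-u_+=-\int_t^\infty e^{-is\alpha\Delta}(\nabla\cdot w)v(s)\,ds,
\]
so I must show the $\dot H^{s_c}$-norm of this tail tends to $0$ as $t\to\infty$. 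The second step is to observe that the bilinear estimates proved for Theorem~\ref{wellposed_1} — the ones giving boundedness of the Duhamel maps from the relevant $V^2$-type nonlinearity spaces into $\dot{X}^{s_c}$ — are, by their very construction via duality with $U^2$, estimates on the Duhamel integral over an arbitrary time interval; applying them on $[t,\infty)$ gives
\[
\Bigl\|\int_t^\infty e^{-is\alpha\Delta}(\nabla\cdot w)v(s)\,ds\Bigr\|_{\dot H^{s_c}}\lesssim \bigl\|(\nabla\cdot w)v\bigr\|_{N^{s_c}([t,\infty))},
\]
and the right-hand side is the tail of a quantity finite over $[0,\infty)$.

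The main obstacle — really the only nontrivial point — is establishing that this tail actually vanishes rather than merely staying bounded: the bilinear/nonlinear norm on $[0,\infty)$ need not be an integral of a pointwise density, so monotone convergence is not immediate. I would handle this by a density/approximation argument in the Hadac–Herr–Koch spaces: the nonlinearity, being in the atomic space controlling the Duhamel term, can be approximated in that norm by functions supported in finite time intervals $[0,T]$, for which the tail integral over $[t,\infty)$ is identically zero once $t>T$; combining this with the uniform bilinear bound yields $\limsup_{t\to\infty}\|\text{tail}\|_{\dot H^{s_c}}=0$. Finally, part (ii) is obtained verbatim with $\dot H^{s_c}$, $\dot X^{s_c}$ replaced by $H^s$, $X^s$ for $s\ge s_c$, since Theorem~\ref{wellposed_1}(ii) supplies exactly the same functional setup in the inhomogeneous spaces, and the $\pm\infty$ statement follows from the time-reversal symmetry already recorded in the Remark.
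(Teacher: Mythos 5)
Your proposal is correct and takes essentially the same route as the paper: the paper simply observes that, by Proposition~\ref{Duam_est}, the profile $e^{-it\sigma\Delta}P_{N}I^{(j)}_{\infty,\sigma}(\cdot,\cdot)$ of each Duhamel term lies in $V^{2}$ (indeed $U^{2}$), and then invokes Proposition~\ref{upvpprop}~(iii)--(iv) (existence of $\lim_{t\to\infty}$ in $L^{2}$ for $U^{p}/V^{p}$ functions) together with square-summability over $N$ to define $(u_{+},v_{+},w_{+})$ --- which is exactly the structural fact you state at the outset, your tail-plus-density argument being an unwinding of its proof via the atomic structure of $U^{2}$. One caution for the implementation: the approximation by compactly time-supported objects must be carried out on the Duhamel term itself in the atomic space $U^{2}$ (finite sums of atoms are eventually constant, so their tails vanish), not by truncating the factors $u,v$ in time, since $||\ee_{[T,\infty)}u||_{V^{2}}$ does not tend to $0$ as $T\to\infty$.
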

\begin{thm}\label{wellposed_2}Let $\alpha$, $\beta$, $\gamma \in \R\backslash \{0\}$.  \\
{\rm (i)} Let $d\geq 4$. We assume $(\alpha -\gamma )(\beta +\gamma ) \neq 0$ and $s>s_{c}$. 
Then {\rm (\ref{NLS_sys})} is locally well-posed in $H^{s}$. 
More precisely, for any $r>0$ and for all initial data $(u_{0}, v_{0}, w_{0})\in B_{r}(H^{s}\times H^{s}\times H^{s})$, there exist $T=T(r)>0$ and a solution
\[
(u,v,w)\in X^{s}([0,T])\subset C\left([0,T];H^{s}\right)
\]
of the system {\rm (\ref{NLS_sys})} on $(0, T]$. 
Such solution is unique in $X_{r}^{s}([0,T])$ which is a closed subset of $X^{s}([0,T])$. 
Moreover, the flow map
\[
S_{+}:B_{r}(H^{s}\times H^{s}\times H^{s})\ni (u_{0},v_{0},w_{0})\mapsto (u,v,w)\in X^{s}([0,T])
\]
is Lipschitz continuous. \\
{\rm (ii)} Let $d=2$, $3$. We assume $s>s_{c}$ if $\theta >0$, $s\geq 1$ if $\theta \leq 0$ and $\kappa \neq 0$, and $s>1$ if $\alpha =\beta$. 
Then the statement in {\rm (i)} remains valid. \\ 
{\rm (iii)} Let $d=1$. We assume $s\geq 0$ if $\theta > 0$, $s\geq 1$ if $\theta= 0$, and $s\geq 1/2$ if $\theta<0$ and $(\alpha -\gamma )(\beta +\gamma ) \neq 0$.  
Then the statement in {\rm (i)} remains valid.  
\end{thm}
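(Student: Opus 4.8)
The plan is to solve (\ref{NLS_sys}) by the contraction mapping principle applied to its Duhamel formulation. Write $A=(u,v,w)$, let $e^{it\Lambda}$ be the propagator of the linear diagonal system with symbols $\tau-\alpha|\xi|^2$, $\tau-\beta|\xi|^2$, $\tau-\gamma|\xi|^2$, and let $N(A)$ collect the quadratic terms $-(\nabla\cdot w)v$, $-(\nabla\cdot\overline w)u$, $\nabla(u\cdot\overline v)$; then $A(t)=e^{it\Lambda}A_0-i\int_0^t e^{i(t-t')\Lambda}N(A)(t')\,dt'$. Running the fixed point in the spaces $X^s([0,T])$ adapted to the three Schr\"odinger flows (the $V^2$- or $X^{s,1/2}$-type spaces associated to each dispersion relation), everything reduces to the linear and energy estimates together with a bilinear estimate
\[
\|N(A,B)\|_{Y^s([0,T])}\lesssim T^{\rho}\,\|A\|_{X^s([0,T])}\|B\|_{X^s([0,T])},
\]
where $Y^s$ is the space appearing in the inhomogeneous Duhamel bound and $\rho=\rho(s)>0$ is a small power coming from the strict subcriticality $s>s_c$ (at the endpoints $s=0,\,1/2,\,1$ one instead uses smallness of the data on a short interval in place of $T^\rho$). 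Uniqueness in the ball, Lipschitz dependence on the data and persistence of regularity then follow from the same bilinear estimate by routine arguments, so the whole theorem rests on it.

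For the bilinear estimate I would decompose the two inputs dyadically in frequency, $|\xi_j|\sim N_j$, and split the output frequency $\xi=\xi_1\pm\xi_2$ into the regimes high$\times$high$\to$low and high$\times$low$\to$high. The single derivative on the output contributes a factor $N_{\max}$ which must be recovered, either from a gain in the modulation variables or from a bilinear Strichartz estimate. The algebraic input is the resonance identity: the resonance function for the $u$-equation is, up to sign,
\[
H=(\alpha-\gamma)|\xi_1|^2+2\alpha\,\xi_1\cdot\xi_2+(\alpha-\beta)|\xi_2|^2 ,
\]
a quadratic form whose determinant is $(\alpha-\gamma)(\alpha-\beta)-\alpha^2=\beta\gamma-\alpha\gamma-\alpha\beta=\theta$; the $v$- and $w$-equations produce the analogous forms obtained by permuting $\alpha,\beta,\gamma$ and flipping signs, again with determinant $\theta$ and with leading coefficients drawn (up to sign) from $\{\alpha-\beta,\ \alpha-\gamma,\ \beta+\gamma\}$, so all three forms are nondegenerate exactly when $\kappa\ne0$. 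If $\theta>0$ every form is definite and the largest modulation is $\gtrsim N_{\max}^2$, which pays for the derivative with a full derivative to spare; the estimate then closes for all $s>s_c$. If $\theta\le0$ the form is indefinite and vanishes on a codimension-one cone on which no modulation gain is available, so there $N_{\max}$ must be extracted from a bilinear Strichartz estimate, which costs Sobolev regularity; $\kappa\ne0$ is then used to guarantee that at least the high$\times$low$\to$high pieces still gain from modulation.

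The regularity thresholds in (i)--(iii) are governed by the strength of the bilinear Strichartz estimates available in each dimension. For $d\ge4$ the bilinear $L^2_{t,x}$ estimates for the Schr\"odinger flow --- the bilinear refinement of Strichartz, together with transversality estimates for waves on distinct paraboloids --- already absorb $N_{\max}$ with no hypothesis on $\theta$, provided the resonance forms are nondegenerate, which is ensured by $(\alpha-\gamma)(\beta+\gamma)\ne0$; in particular $\alpha=\beta$ is harmless here. For $d=2,3$ the bilinear gain is weaker: the genuinely nonresonant case $\theta>0$ still reaches $s>s_c$, but under $\theta\le0$ and $\kappa\ne0$ one can only pay for the derivative after spending almost a full derivative of regularity, forcing $s\ge1$; and when $\alpha=\beta$, which forces $\theta<0$ and $\kappa=0$, the $u$-$v$ interaction degenerates further and one loses an additional endpoint, whence $s>1$. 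For $d=1$ the Strichartz exponents are weakest: the nonresonant case $\theta>0$ reaches $L^2$, i.e. $s\ge0$ (the Gr\"unrock-type range), $\theta=0$ is purely degenerate and needs $s\ge1$, while $\theta<0$ with $(\alpha-\gamma)(\beta+\gamma)\ne0$ admits a one-dimensional bilinear Strichartz estimate on the resonant lines which restores half a derivative, giving $s\ge1/2$.

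I expect the main obstacle to be precisely the resonant/degenerate regime $\theta\le0$: on the null set of $H$ (or of one of its companion forms) the derivative can only be recovered through a bilinear Strichartz estimate rather than through modulation, and pinning down the sharp regularity there --- the distinction between $s\ge1$ and the strict $s>1$ when $\alpha=\beta$, and between these and $s\ge1/2$ in one dimension --- calls for a careful analysis of the frequency interactions near the resonant set together with a sharp dyadic summation. A secondary, essentially technical difficulty is carrying the three distinct dispersion relations and the complex conjugations in $N$ through the argument simultaneously, so that the null structure present in each of the three equations is exploited consistently in the $X^{s,b}$ (or $U^2$/$V^2$) duality and in the high$\times$low summations.
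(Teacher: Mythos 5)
Your plan follows essentially the same route as the paper: a contraction argument for the Duhamel map in $U^2/V^2$-based spaces (reduced by duality to trilinear integral estimates with a $T^{\delta}$ gain), where the derivative is paid for either by the modulation gain coming from the resonance form --- whose discriminant is exactly $\theta$ and whose diagonal coefficients are $\pm(\alpha-\beta)$, $\pm(\alpha-\gamma)$, $\pm(\beta+\gamma)$, i.e.\ the factors of $\kappa$ --- or by bilinear/linear Strichartz estimates whose strength depends on $d$, with standard Bourgain spaces reserved for $d=1$, $1/2\le s<1$, $\theta<0$ (Appendix~\ref{appendix}). The thresholds you predict and the mechanisms you assign to them, including the observation that for $\alpha=\beta$ the resonance only degenerates when the output frequency (hence the derivative) is low, coincide with the paper's Propositions~\ref{HL_est_sub}--\ref{HHH_est_sub} and Corollaries~\ref{HH-HL_est_sub}--\ref{HL_HH_est_sub2}.
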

\begin{rem}
For the case $d=1$, $1>s\geq 1/2$, $\theta <0$ and $(\alpha -\gamma )(\beta +\gamma ) \neq 0$, 
we prove the well-posedness as $X^{s}([0,T])=X^{s,b}_{\alpha}([0,T])\times X^{s,b}_{\beta}([0,T])\times X^{s,b}_{\gamma}([0,T])$, 
where $X^{s,b}_{\sigma}$ denotes the standard Bourgain space 
which is the completion of the Schwarz space with respect to the norm 
$||u||_{X^{s,b}_{\sigma}}:=||\langle \xi \rangle^{s}\langle \tau +\sigma \xi^{2}\rangle^{b}\widetilde{u}||_{L^{2}_{\tau \xi}}$ 
{\rm (}see Appendix~\ref{appendix}{\rm )}.   
\end{rem}
System (\ref{NLS_sys}) has the following conservation quantities (see Proposition~\ref{conservation}):
\[
\begin{split}
M(u,v,w)&:=2||u||_{L^{2}_{x}}^{2}+||v||_{L^{2}_{x}}^{2}+||w||_{L^{2}_{x}}^{2},\\
H(u,v,w)&:=\alpha ||\nabla u||_{L^{2}_{x}}^{2}+\beta ||\nabla v||_{L^{2}_{x}}^{2}+\gamma ||\nabla w||_{L^{2}_{x}}^{2}+2{\rm Re}(w ,\nabla (u\cdot \overline{v}))_{L^{2}_{x}}. 
\end{split}
\]
By using the conservation law for $M$ and $H$, we obtain the following result.
\begin{thm}\label{global_extend}\kuuhaku \\
{\rm (i)} Let $d=1$ and assume that $\alpha$, $\beta$, $\gamma \in \R\backslash \{0\}$ satisfy $\theta >0$. 
For every $(u_{0}, v_{0}, w_{0})\in L^{2}\times L^{2}\times L^{2}$, we can extend the local $L^{2}$ solution of {\rm Theorem~\ref{wellposed_2}} globally in time. \\
{\rm (ii)} We assume that $\alpha$, $\beta$, $\gamma \in \R \backslash \{0\}$ have the same sign and satisfy $\kappa \neq 0$ if $d=2$, $3$ and 
 $(\alpha -\gamma )(\beta +\gamma ) \neq 0$ if $d=1$. There exists $r>0$ such that 
for every $(u_{0}, v_{0}, w_{0})\in B_{r}(H^{1}\times H^{1}\times H^{1})$, we can extend the local $H^{1}$ solution of {\rm Theorem~\ref{wellposed_2}} globally in time. 
\end{thm}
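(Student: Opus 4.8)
The strategy is the classical conservation-law bootstrap: the local theory of Theorem~\ref{wellposed_2} produces a solution on a time interval whose length depends only on the size of the data in the relevant norm, so it suffices to propagate an a priori bound on that norm. For part (i) the relevant norm is $L^2$, and the conserved mass $M(u,v,w)=2\|u\|_{L^2_x}^2+\|v\|_{L^2_x}^2+\|w\|_{L^2_x}^2$ is, up to the fixed constants $2,1,1$, equivalent to $\|(u,v,w)\|_{L^2\times L^2\times L^2}^2$. Hence $\|(u(t),v(t),w(t))\|_{L^2\times L^2\times L^2}$ stays equal to its initial value for as long as the solution exists; since $d=1$ and $\theta>0$ places us in the $s=0$ case of Theorem~\ref{wellposed_2}(iii), the existence time is bounded below by a function of this norm alone, and iterating the local result on successive intervals of equal length extends the solution to all of $[0,\infty)$ (and, by time reversibility, to $\R$).

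For part (ii) the relevant norm is $H^1$, controlled by $M$ together with the Hamiltonian $H(u,v,w)=\alpha\|\nabla u\|_{L^2_x}^2+\beta\|\nabla v\|_{L^2_x}^2+\gamma\|\nabla w\|_{L^2_x}^2+2\,{\rm Re}(w,\nabla(u\cdot\overline v))_{L^2_x}$. When $\alpha,\beta,\gamma$ have the same sign, the quadratic part $\alpha\|\nabla u\|^2+\beta\|\nabla v\|^2+\gamma\|\nabla w\|^2$ is (after multiplying $H$ by the common sign) comparable to $\|\nabla(u,v,w)\|_{L^2}^2$. The plan is to absorb the cubic interaction term: by Hölder and the Gagliardo--Nirenberg/Sobolev inequality one estimates
\[
\bigl|{\rm Re}(w,\nabla(u\cdot\overline v))_{L^2_x}\bigr|\lec \|\nabla w\|_{L^2_x}\|u\|_{L^\infty_x}\|v\|_{L^2_x}+\cdots \lec \|\nabla(u,v,w)\|_{L^2_x}^{1+\mu}\,M(u,v,w)^{\nu}
\]
for suitable exponents $\mu<1$, $\nu$ depending on $d\le 3$. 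Combining this with conservation of $M$ and $H$ and a continuity (bootstrap) argument shows that, provided the initial $H^1$-norm is smaller than some $r>0$, the gradient norm cannot escape a fixed ball: $\|\nabla(u(t),v(t),w(t))\|_{L^2}\le C\|\nabla(u_0,v_0,w_0)\|_{L^2}$ uniformly in $t$. This uniform $H^1$ bound then feeds the local theory of Theorem~\ref{wellposed_2} (the $s\ge 1$, resp. $s=1$, cases covered by the stated hypotheses on $\kappa$ and $(\alpha-\gamma)(\beta+\gamma)$), and the same iteration as in part (i) gives global existence.

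The main obstacle is part (ii): one must verify that the cubic term in $H$ is genuinely lower order relative to the quadratic part in the $H^1$-topology, which requires $d\le 3$ (so that $H^1\hookrightarrow L^p$ for the needed $p$) and an honest Gagliardo--Nirenberg estimate with a gradient power strictly less than $2$, together with the smallness of the data to close the bootstrap — without smallness the coercivity of $H$ can fail. A secondary technical point is checking that the local solutions furnished by Theorem~\ref{wellposed_2} actually conserve $M$ and $H$; this is where one invokes Proposition~\ref{conservation}, applied on each subinterval of the iteration. The gluing of the solutions across subintervals and the upgrade from the $X^s$-type spaces to $C([0,\infty);H^s)$ are routine given the uniform-in-time norm bound.
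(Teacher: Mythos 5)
Your overall strategy is the paper's: conserve $M$ and $H$ (Proposition~\ref{conservation}), use the sign condition on $\alpha,\beta,\gamma$ to make the quadratic part of $H$ coercive, control the cubic term by Gagliardo--Nirenberg, close with smallness of the data, and iterate the local theory; part (i) via mass conservation alone is exactly what the paper does. One quantitative claim in part (ii) is wrong, however, and it matters for $d=2,3$. Scaling forces the cubic term to carry gradient power $1+d/2$: the correct splitting is
\[
\bigl|\bigl(w,\nabla(u\cdot\overline v)\bigr)_{L^2_x}\bigr|\le \|\nabla\cdot w\|_{L^2_x}\|u\|_{L^4_x}\|v\|_{L^4_x}\lec M^{1-d/4}F^{(d+2)/4},\qquad F:=\|\nabla u\|_{L^2_x}^2+\|\nabla v\|_{L^2_x}^2+\|\nabla w\|_{L^2_x}^2,
\]
so the exponent of $\|\nabla(u,v,w)\|_{L^2}$ is $3/2$ for $d=1$, exactly $2$ for $d=2$, and $5/2$ for $d=3$; your asserted bound with $\mu<1$ (power strictly below $2$) is unattainable for $d\ge2$, and your sample H\"older splitting $\|\nabla w\|_{L^2}\|u\|_{L^\infty}\|v\|_{L^2}$ is unavailable there since $H^1\not\hookrightarrow L^\infty$ for $d\ge2$. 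If the $\mu<1$ bound were true, Young's inequality would absorb the cubic term with no smallness at all, contradicting the role you yourself assign to the small-data hypothesis. The repair is what the paper does: for $d\le2$ use $F^{(d+2)/4}\le 1+F$ and absorb the linear-in-$F$ term using $CM_0^{1-d/4}<\rho_{\min}$ (guaranteed by small $r$); for $d=3$ the power $5/4>1$ genuinely requires the continuity/bootstrap argument starting from $F(0)<\epsilon^2/\rho_{\max}$. Since you do invoke the bootstrap and smallness, the architecture closes once the exponent is corrected.
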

While, we obtain the negative result as follows.
\begin{thm}\label{notC2}
Let $d\geq 1$ and $\alpha$, $\beta$, $\gamma \in \R\backslash \{0\}$.  
We assume $s\in \R$ if $(\alpha -\gamma )(\beta +\gamma )=0$, $s<1$ if $\theta =0$, and $s<1/2$ if $\theta <0$. 
Then the flow map of {\rm (\ref{NLS_sys})} is not $C^{2}$ in $H^{s}$. 
\end{thm}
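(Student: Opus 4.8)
The plan is to run the standard argument for ruling out $C^{2}$ regularity of a flow map: pass from $C^{2}$ regularity to a bilinear Duhamel estimate, and then destroy that estimate with a suitable frequency-localized choice of data. So I would suppose, for contradiction, that for some $T>0$ and $r>0$ the flow map $S_{+}$ of (\ref{NLS_sys}) is of class $C^{2}$ from $B_{r}(H^{s}\times H^{s}\times H^{s})$ into $C([0,T];H^{s}\times H^{s}\times H^{s})$. Writing the solution with data $(\varepsilon f,\varepsilon g,0)$ as a power series in $\varepsilon$ and matching the $\varepsilon^{2}$-term of the $w$-component in the Duhamel form of the third equation of (\ref{NLS_sys}), boundedness of $D^{2}S_{+}(0)$ forces
\[
\sup_{0\le t\le T}\left\|\ \int_{0}^{t}e^{i(t-t')\gamma\Delta}\nabla\!\left(e^{it'\alpha\Delta}f\cdot\overline{e^{it'\beta\Delta}g}\,\right)dt'\ \right\|_{H^{s}_{x}}\ \lesssim\ \|f\|_{H^{s}}\,\|g\|_{H^{s}}.
\]
Since $f$ and $g$ are $d$-component vector fields, I would take $f=\tilde f\,e_{1}$ and $g=\tilde g\,e_{1}$ with scalar $\tilde f,\tilde g$ and keep only the first component of the (vector-valued) left-hand side, so that it suffices to contradict the scalar estimate obtained by replacing $\nabla$ with $\partial_{1}$. (The first two equations yield analogous estimates, but the third alone suffices in every parameter regime.)

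Next I would identify the resonance function controlling that integral: after space-time Fourier transform the relevant phase is
\[
\phi(\xi_{1},\xi_{2}):=\gamma|\xi_{1}-\xi_{2}|^{2}+\beta|\xi_{2}|^{2}-\alpha|\xi_{1}|^{2}=(\gamma-\alpha)|\xi_{1}|^{2}+(\gamma+\beta)|\xi_{2}|^{2}-2\gamma\,\xi_{1}\cdot\xi_{2},
\]
where $\xi_{1}$, $\xi_{2}$ are the input frequencies of $\tilde f$, $\tilde g$ and $\xi=\xi_{1}-\xi_{2}$ is the output frequency. The coefficient matrix of the one-variable section of $\phi$ has determinant $\theta$, so along a coordinate line $\phi$ is (positive or negative) definite when $\theta>0$, a perfect square when $\theta=0$, and a product of two distinct real linear forms when $\theta<0$; and when $(\alpha-\gamma)(\beta+\gamma)=0$ one of the coefficients $\gamma-\alpha,\ \gamma+\beta$ vanishes, so one of the null lines of $\phi$ is a coordinate axis $\{\xi_{1}=0\}$ or $\{\xi_{2}=0\}$. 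This trichotomy is precisely what separates the three cases of the theorem; moreover, in the two cases with $\theta\le0$ and $(\alpha-\gamma)(\beta+\gamma)\ne0$ one checks (using $\alpha\ne0$) that $\phi$ has a null direction along which the output frequency $\xi$ does not vanish.

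The counterexample I would use is frequency-localized. Fix a large parameter $N$; for $d\ge2$ one works in the first coordinate and tensors with a fixed small bump $\1_{B'}$ in the remaining variables (whose small frequencies contribute a negligible amount to $\phi$, harmlessly absorbed). Take $\widehat{\tilde f}=\1_{J_{1}\times B'}$ and $\widehat{\tilde g}=\1_{J_{2}\times B'}$ with $J_{1},J_{2}\subset\R$ intervals of common length $L$, placed along a null direction of $\phi$: then $|\phi|\lesssim L^{2}$ on the interaction region when $\theta=0$ (and one takes $L\sim1$), $|\phi|\lesssim NL$ when $\theta<0$ (and one takes $L\sim(NT)^{-1}$), and $|\phi|\lesssim NL$ with one of $J_{1},J_{2}$ centered at the origin when $(\alpha-\gamma)(\beta+\gamma)=0$ (again $L\sim(NT)^{-1}$), while in all cases the output frequency has modulus $\sim N$. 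Since $|\phi|\,t'\lesssim1$ on the support for $0\le t'\le T$, the time integral $\int_{0}^{t}e^{it'\phi}\,dt'$ has real part $\gtrsim t$, and all the Fourier factors are real and nonnegative, so there is no cancellation in the convolution variable; evaluating at $t=T$ gives a lower bound $\gtrsim_{T,B'}N^{s}\cdot N\cdot L^{3/2}$ for the left-hand side, while $\|\tilde f\|_{H^{s}}\|\tilde g\|_{H^{s}}\sim_{B'}N^{2s}L$ if both inputs sit at frequency $\sim N$ and $\sim_{B'}N^{s}L$ if one input sits at frequency $\sim1$. Inserting the value of $L$ and letting $N\to\infty$, the inequality fails unless $s\ge1$ when $\theta=0$, fails unless $s\ge1/2$ when $\theta<0$, and fails for every $s\in\R$ when $(\alpha-\gamma)(\beta+\gamma)=0$, since then the weight $N^{s}$ on the low-frequency input is absent, the two powers $N^{s}$ cancel, and the uncompensated derivative $\partial_{1}$, producing the factor $N$, survives.

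I expect the main obstacle to be exactly that last, fully degenerate case $(\alpha-\gamma)(\beta+\gamma)=0$: unlike the other two regimes the Sobolev weights do not cancel automatically, and one must genuinely use that the vanishing of $\gamma-\alpha$ or of $\gamma+\beta$ lets one of the two interacting factors be supported near frequency $1$ --- this is the same degeneracy responsible for Christ's ill-posedness of $i\partial_{t}u-\partial_{x}^{2}u=u\partial_{x}u$. The only other point requiring care is the (routine) justification that $C^{2}$ regularity of the flow map really yields the displayed bilinear bound uniformly on $[0,T]$; this is by now standard.
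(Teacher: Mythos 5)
Your proposal is correct and follows essentially the same route as the paper: reduce $C^{2}$ regularity of the flow map to a bilinear bound on the second Picard iterate for the $w$-equation, factor the resonance function $\phi$ (whose discriminant is exactly $\theta$, giving the same trichotomy: coordinate null line when $(\alpha-\gamma)(\beta+\gamma)=0$, perfect square when $\theta=0$, two distinct real null lines when $\theta<0$), and defeat the estimate with characteristic functions of intervals of length $\sim 1$ (resp.\ $\sim N^{-1}$) placed along a null direction with nonvanishing output frequency, which is precisely the paper's choice of $D_{1}$, $D_{2}$, $D$ in each of the three cases. The resulting exponent counts ($N^{1-s}$, $N^{1/2-s}$, $N^{1/2}$) match the paper's, so no further comment is needed.
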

Furthermore, for the equation (\ref{ddqdnls}), we obtain the following result.
\begin{thm}\label{ddqdnls_wp}
Let $d\geq 2$. Then, the equation {\rm (\ref{ddqdnls})} is 
globally well-posed for small data in $\dot{H}^{s_{c}}$ {\rm (resp.} $H^{s}$ for $s\geq s_{c}${\rm )} 
and the solution converges to a free solution in $\dot{H}^{s_{c}}$ {\rm (resp.} $H^{s}$ for $s\geq s_{c}${\rm )}  asymptotically in time. 
\end{thm}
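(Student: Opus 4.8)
The equation (\ref{ddqdnls}) is the scalar analogue of the $w$-component of the system (\ref{NLS_sys}), but with the feature that \emph{both} factors in the nonlinearity $\overline{u}\,(\na\cdot\overline{u})$ are complex conjugated. Evaluating the interaction of frequencies $\xi=\xi_{1}+\xi_{2}$ against the Schr\"odinger dispersion relation, the associated resonance function is
\[
|\xi|^{2}+|\xi_{1}|^{2}+|\xi_{2}|^{2}\ \gec\ \max\{|\xi|,|\xi_{1}|,|\xi_{2}|\}^{2},
\]
which is uniformly elliptic and never degenerates. This is in sharp contrast with the $w$-equation of (\ref{NLS_sys}), where the mismatch of $\alpha,\beta,\gamma$ creates the characteristic set governed by $\kappa$ and $\theta$; consequently no restriction beyond $d\ge 2$ is required, and the proof is a simpler instance of the scheme already used for Theorem~\ref{wellposed_1}.

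Concretely, the plan is to solve (\ref{ddqdnls}) by the contraction mapping principle for the Duhamel map
\[
u\ \longmapsto\ e^{-it\Delta}u_{0}-i\int_{0}^{t}e^{-i(t-t')\Delta}\big(\overline{u}\,(\na\cdot\overline{u})\big)(t')\,dt'
\]
in a small ball of the scaling-critical adapted space $\dot{X}^{s_{c}}([0,\infty))$ built on $U^{2}$ and $V^{2}$ (the very space appearing in Theorem~\ref{wellposed_1}; see (\ref{Xs_norm}) and (\ref{Xrs_norm})). Everything then reduces to the bilinear estimate
\[
\Big\|\int_{0}^{t}e^{-i(t-t')\Delta}\big(\overline{u_{1}}\,(\na\cdot\overline{u_{2}})\big)(t')\,dt'\Big\|_{\dot{X}^{s_{c}}}\ \lec\ \|u_{1}\|_{\dot{X}^{s_{c}}}\,\|u_{2}\|_{\dot{X}^{s_{c}}},
\]
which, by the $V^{2}$-duality characterization of $\dot{X}^{s_{c}}$, is equivalent to a trilinear estimate of the form $\big|\iint \overline{u_{1}}\,(\na\cdot\overline{u_{2}})\,u_{3}\,dt\,dx\big|\lec\prod_{j=1}^{3}\|u_{j}\|$ with the $\dot{H}^{s_{c}}$-level adapted norms.

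To prove that trilinear estimate I would decompose dyadically into frequencies $N_{1},N_{2},N_{3}$ and run through the standard interaction regimes. In the high$\times$high$\to$low case $N_{1}\sim N_{2}\gg N_{3}$ the output derivative costs only $N_{3}$, and the needed summability is supplied by the bilinear Strichartz inequality for the free flow, $\|e^{-it\Delta}P_{N_{1}}f\cdot e^{-it\Delta}P_{N_{2}}g\|_{L^{2}_{t,x}}\lec N_{3}^{(d-1)/2}\,N_{1}^{-1/2}\|f\|_{L^{2}}\|g\|_{L^{2}}$ --- here $d\ge 2$ is exactly what makes a positive power of the low output frequency available --- transferred from free solutions to $U^{2}$ and interpolated to $V^{2}$ in the usual way. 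In the high$\times$low$\to$high and comparable cases, say $N_{2}\sim N_{3}\gg N_{1}$, there is a genuine loss of a full derivative $N_{2}$ at the output; this is recovered from the ellipticity $|\xi|^{2}+|\xi_{1}|^{2}+|\xi_{2}|^{2}\gec N_{2}^{2}$ of the resonance function, which forces at least one of the three functions to be supported in temporal modulation $\gec N_{2}^{2}$, and the embedding of $V^{2}$ into $L^{2}_{t,x}$ restricted to a modulation block of that size gains a factor $N_{2}^{-1}$, after which the remaining pair is handled by a second (bilinear) Strichartz estimate. All dyadic sums converge thanks to the leftover room $N_{1}^{0+}$ and the off-diagonal gains. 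Assembling the pieces gives the bilinear estimate, and the contraction mapping principle in a small ball $\dot{X}^{s_{c}}_{r}([0,\infty))$ then yields the small-data global solution, its uniqueness there, and the Lipschitz dependence on $u_{0}$.

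Scattering follows because the Duhamel term belongs to $U^{2}_{\Delta}$, hence to $V^{2}_{\Delta}$, whose elements have strong limits in $\dot{H}^{s_{c}}$ as $t\to\pm\infty$; thus $e^{it\Delta}u(t)$ converges to $u_{\pm}:=u_{0}-i\int_{0}^{\pm\infty}e^{it'\Delta}\big(\overline{u}\,(\na\cdot\overline{u})\big)(t')\,dt'$ in $\dot{H}^{s_{c}}$, so that $u(t)-e^{-it\Delta}u_{\pm}\to 0$. The statement in $H^{s}$, $s\ge s_{c}$, is obtained by rerunning the same estimate with $\LR{\xi}^{s}$-weights in the inhomogeneous spaces $X^{s}$, exactly as in part (ii) of Theorem~\ref{wellposed_1}. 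The main obstacle is the bilinear estimate at the scaling-critical regularity: one must precisely balance the one-derivative loss against the two independent sources of gain --- the bilinear Strichartz improvement available for $d\ge 2$, and the ellipticity of $|\xi|^{2}+|\xi_{1}|^{2}+|\xi_{2}|^{2}$ --- while keeping every dyadic summation summable.
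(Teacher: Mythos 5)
Your plan is analytically sound and would prove the theorem, but it reconstructs from scratch what the paper obtains in one line: equation (\ref{ddqdnls}) is exactly the first equation of the system (\ref{NLS_sys}) with $(\alpha ,\beta ,\gamma )=(-1,1,1)$ and $w$, $v$ replaced by (constant multiples of) $\overline{u}$, and since $\theta =(-1)(1)(1)(-1-1-1)=3>0$ and $\kappa =(-2)(-2)(2)=8\neq 0$, the hypotheses of Theorem~\ref{wellposed_1} hold for every $d\geq 2$; the paper therefore simply invokes the already-proved Duhamel estimates (\ref{Duam_al}) and (\ref{Duam_al_inhom}) for this parameter choice (using $||\overline{u}||_{V^{2}_{\sigma}}=||u||_{V^{2}_{-\sigma}}$) and runs the identical contraction and scattering arguments. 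Your observation that the resonance function degenerates into $|\xi_{1}|^{2}+|\xi_{2}|^{2}+|\xi_{3}|^{2}\gtrsim \max_{j}|\xi_{j}|^{2}$ is precisely the statement that $\sigma_{1}\sigma_{2}\sigma_{3}(1/\sigma_{1}+1/\sigma_{2}+1/\sigma_{3})=3>0$ for $(\sigma_{1},\sigma_{2},\sigma_{3})=(1,1,1)$, i.e.\ Lemma~\ref{modul_est}\,(ii), so you have correctly identified why no structural condition survives; what you gain by redoing the trilinear estimates is nothing the system case does not already give. Two points to tighten if you do carry out the direct proof: first, in $\overline{u}\,(\nabla\cdot\overline{u})$ the derivative falls on an \emph{input}, so in the high$\times$high$\to$low regime it costs $N_{\max}$, not $N_{3}$ as you wrote --- this is still covered because the paper's trilinear estimates (Propositions~\ref{HL_est}--\ref{HHH_est}) carry the weight $N_{\max}$, but your accounting as stated would not close; second, the bilinear Strichartz inequality you quote for two comparable high frequencies with a low-frequency output projection is not the one proved in Lemma~\ref{L2be} (which gains from the \emph{smaller input} frequency), and the paper instead reaches the same numerology by pairing the low-frequency dual function with one high-frequency factor after the modulation decomposition.
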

\begin{rem}
The results by Gr\"unrock (\cite{Gr}) are not contained the critical case $s=s_{c}$ 
and global property of the solution. 
In this sense, Theorem~\ref{ddqdnls_wp} is the extension of the results by Gr\"unrock (\cite{Gr}).  
\end{rem}
The main tools of our results are $U^{p}$ space and $V^{p}$ space which are applied to prove 
the well-posedness and scattering for KP-II equation at the scaling critical regularity by Hadac, Herr and Koch (\cite{HHK09}, \cite{HHK10}). 
After their work, $U^{p}$ space and $V^{p}$ space are used to prove the well-posedness of the 3D periodic quintic nonlinear Schr\"odinger equation 
at the scaling critical regularity by Herr, Tataru and Tzvetkov (\cite{HTT11}) 
and to prove the well-posedness and the scattering of the quadratic Klein-Gordon system at the scaling critical regularity by Schottdorf (\cite{Sc}).  
\kuuhaku \\

\noindent {\bf Notation.} 
We denote the spatial Fourier transform by\ \ $\widehat{\cdot}$\ \ or $\F_{x}$, 
the Fourier transform in time by $\F_{t}$ and the Fourier transform in all variables by\ \ $\widetilde{\cdot}$\ \ or $\F_{tx}$. 
For $\sigma \in \R$, the free evolution $e^{it\sigma \Delta}$ on $L^{2}$ is given as a Fourier multiplier
\[
\F_{x}[e^{it\sigma \Delta}f](\xi )=e^{-it\sigma |\xi |^{2}}\widehat{f}(\xi ). 
\]
We will use $A\lesssim B$ to denote an estimate of the form $A \le CB$ for some constant $C$ and write $A \sim B$ to mean $A \lesssim B$ and $B \lesssim A$. 
We will use the convention that capital letters denote dyadic numbers, e.g. $N=2^{n}$ for $n\in \Z$ and for a dyadic summation we write
$\sum_{N}a_{N}:=\sum_{n\in \Z}a_{2^{n}}$ and $\sum_{N\geq M}a_{N}:=\sum_{n\in \Z, 2^{n}\geq M}a_{2^{n}}$ for brevity. 
Let $\chi \in C^{\infty}_{0}((-2,2))$ be an even, non-negative function such that $\chi (t)=1$ for $|t|\leq 1$. 
We define $\psi (t):=\chi (t)-\chi (2t)$ and $\psi_{N}(t):=\psi (N^{-1}t)$. Then, $\sum_{N}\psi_{N}(t)=1$ whenever $t\neq 0$. 
We define frequency and modulation projections
\[
\widehat{P_{N}u}(\xi ):=\psi_{N}(\xi )\widehat{u}(\xi ),\ 
\widehat{P_{0}u}(\xi ):=\psi_{0}(\xi )\widehat{u}(\xi ),\ 
\widetilde{Q_{M}^{\sigma}u}(\tau ,\xi ):=\psi_{M}(\tau +\sigma |\xi|^{2})\widetilde{u}(\tau ,\xi ),
\]
where $\psi_{0}:=1-\sum_{N\geq 1}\psi_{N}$ Furthermore, we define $Q_{\geq M}^{\sigma}:=\sum_{N\geq M}Q_{N}^{\sigma}$ and $Q_{<M}:=Id -Q_{\geq M}$. 

The rest of this paper is planned as follows.
In Section 2, we will give the definition and properties of the $U^{p}$ space and $V^{p}$ space. 
In Sections 3, 4 and 5, we will give the bilinear and trilinear estimates which will be used to prove the well-posedness.
In Section 6, we will give the proof of the well-posedness and the scattering (Theorems~\ref{wellposed_1}, ~\ref{wellposed_2}, ~\ref{ddqdnls_wp} and Corollary~\ref{sccat}). 
In Section 7, we will give the a priori estimates and show Theorem~\ref{global_extend}. 
In Section 8, we will give the proof of $C^{2}$-ill-posedness (Theorem~\ref{notC2}). 
In Appendix A, we will give the proof of the bilinear estimates for the standard $1$-dimensional 
Bourgain norm 
under the condition 
$(\alpha-\gamma)(\beta+\gamma)\neq 0$ and $\alpha \beta \gamma (1/\alpha-1/\beta-1/\gamma)\neq 0$.
%

\section{$U^{p}$, $V^{p}$ spaces  and their properties \label{func_sp}}
In this section, we define the $U^{p}$ space and the $V^{p}$ space, 
and introduce the properties of these spaces 
which are proved by Hadac, Herr and Koch (\cite{HHK09}, \cite{HHK10}). 

We define the set of finite partitions $\ZZ$ as
\[
\ZZ :=\left\{ \{t_{k}\}_{k=0}^{K}|K\in \N , -\infty <t_{0}<t_{1}<\cdots <t_{K}\leq \infty \right\}
\]
and if $t_{K}=\infty$, we put $v(t_{K}):=0$ for all functions $v:\R \rightarrow L^{2}$. 
\begin{defn}\label{upsp}
Let $1\leq p <\infty$. For $\{t_{k}\}_{k=0}^{K}\in \ZZ$ and $\{\phi_{k}\}_{k=0}^{K-1}\subset L^{2}$ with 
$\sum_{k=0}^{K-1}||\phi_{k}||_{L^{2}}^{p}=1$ we call the function $a:\R\rightarrow L^{2}$ 
given by
\[
a(t)=\sum_{k=1}^{K}\ee_{[t_{k-1},t_{k})}(t)\phi_{k-1}
\]
a ``$U^{p}${\rm -atom}''. 
Furthermore, we define the atomic space 
\[
U^{p}:=\left\{ \left. u=\sum_{j=1}^{\infty}\lambda_{j}a_{j}
\right| a_{j}:U^{p}{\rm -atom},\ \lambda_{j}\in \C \ {\rm such\ that}\  \sum_{j=1}^{\infty}|\lambda_{j}|<\infty \right\}
\]
with the norm
\[
||u||_{U^{p}}:=\inf \left\{\sum_{j=1}^{\infty}|\lambda_{j}|\left|u=\sum_{j=1}^{\infty}\lambda_{j}a_{j},\ 
a_{j}:U^{p}{\rm -atom},\ \lambda_{j}\in \C\right.\right\}.
\]
\end{defn}
\begin{defn}\label{vpsp}
Let $1\leq p <\infty$. We define the space of the bounded $p$-variation 
\[
V^{p}:=\{ v:\R\rightarrow L^{2}|\ ||v||_{V^{p}}<\infty \}
\]
with the norm
\begin{equation}\label{vpnorm}
||v||_{V^{p}}:=\sup_{\{t_{k}\}_{k=0}^{K}\in \ZZ}\left(\sum_{k=1}^{K}||v(t_{k})-v(t_{k-1})||_{L^{2}}^{p}\right)^{1/p}.
\end{equation}
Likewise, let $V^{p}_{-, rc}$ denote the closed subspace of all right-continuous functions $v\in V^{p}$ with 
$\lim_{t\rightarrow -\infty}v(t)=0$, endowed with the same norm {\rm (\ref{vpnorm})}.
\end{defn}
\begin{prop}\label{upvpprop}
Let $1\leq p<q<\infty$. \\
{\rm (i)} $U^{p}$, $V^{p}$ and $V^{p}_{-, rc}$ are Banach spaces. \\ 
{\rm (ii)} Every $u\in U^{p}$ is right-continuous as $u:\R \rightarrow L^{2}$. \\
{\rm (iii)} For Every $u\in U^{p}$, $\lim_{t\rightarrow -\infty}u(t)=0$ and $\lim_{t\rightarrow \infty}u(t)$ exists in $L^{2}$. \\
{\rm (iv)} For Every $v\in V^{p}$, $\lim_{t\rightarrow -\infty}v(t)$ and $\lim_{t\rightarrow \infty}v(t)$ exist in $L^{2}$. \\
{\rm (v)} The embeddings $U^{p}\hookrightarrow V^{p}_{-,rc}\hookrightarrow U^{q}\hookrightarrow L^{\infty}_{t}(\R ;L^{2}_{x}(\R^{d}))$ are continuous. 
\end{prop}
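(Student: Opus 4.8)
These five statements are exactly the basic structure theory of $U^{p}$ and $V^{p}$ spaces of Hadac--Herr--Koch, and the plan is to reproduce their arguments. For (i), I would first record the atomwise bound $\|a(t)\|_{L^{2}_{x}}\le 1$ for every $U^{p}$-atom $a$ (in $a(t)=\sum_{k}\ee_{[t_{k-1},t_{k})}(t)\phi_{k-1}$ only one summand is nonzero, and $\|\phi_{k-1}\|_{L^{2}}\le(\sum_{j}\|\phi_{j}\|_{L^{2}}^{p})^{1/p}=1$), which upgrades to $\|u\|_{L^{\infty}_{t}L^{2}_{x}}\le\|u\|_{U^{p}}$ after minimising over atomic decompositions, and in particular shows $\|\cdot\|_{U^{p}}$ is nondegenerate; completeness of $U^{p}$ then follows from the standard fact that any series $\sum_{j}u_{j}$ with $\sum_{j}\|u_{j}\|_{U^{p}}<\infty$ converges in $U^{p}$, obtained by concatenating atomic decompositions of the $u_{j}$. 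For $V^{p}$, testing the definition on the two-point partition $\{t,\infty\}$ and using $v(\infty)=0$ gives $\|v(t)\|_{L^{2}}\le\|v\|_{V^{p}}$, so $\|\cdot\|_{V^{p}}$ is a genuine norm; completeness then follows by extracting a pointwise limit of a $V^{p}$-Cauchy sequence (pointwise Cauchy by the same bound) and invoking lower semicontinuity of the $p$-variation under pointwise convergence. Finally $V^{p}_{-,rc}$ is a norm-closed subspace of $V^{p}$, since right-continuity and vanishing at $-\infty$ survive limits in the (stronger than uniform) $V^{p}$ topology.

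For (ii)--(iv) the idea is that $U^{p}$ consists of uniform limits of right-continuous step functions: each atom is right-continuous (half-open intervals $[t_{k-1},t_{k})$) with $a(t)=0$ for $t<t_{0}$ and $a(t)$ eventually constant, and the bound $\|\cdot\|_{L^{\infty}_{t}L^{2}_{x}}\le\|\cdot\|_{U^{p}}$ forces uniform convergence of atomic series, so (ii) and (iii) pass to a general $u\in U^{p}$. For (iv), given $v\in V^{p}$ and an increasing sequence $t_{n}\uparrow\infty$, the bound $\sum_{n}\|v(t_{n+1})-v(t_{n})\|_{L^{2}}^{p}\le\|v\|_{V^{p}}^{p}$ shows $(v(t_{n}))_{n}$ is Cauchy in $L^{2}$; interleaving two such sequences shows the limit is independent of the choice, and the argument at $-\infty$ is identical.

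For (v), the embedding $U^{q}\hookrightarrow L^{\infty}_{t}L^{2}_{x}$ is the atomwise bound from (i). For $U^{p}\hookrightarrow V^{p}_{-,rc}$, since $V^{p}$ is already Banach it suffices to bound a single atom $a$: refining an arbitrary partition to contain all jump times $t_{0},\dots,t_{K-1}$ of $a$, the $p$-variation sum is at most $\|\phi_{0}\|_{L^{2}}^{p}+\sum_{k=1}^{K-1}\|\phi_{k}-\phi_{k-1}\|_{L^{2}}^{p}+\|\phi_{K-1}\|_{L^{2}}^{p}\lesssim\sum_{k=0}^{K-1}\|\phi_{k}\|_{L^{2}}^{p}=1$ by the triangle inequality, so $\|a\|_{V^{p}}\lesssim 1$, while right-continuity and the $-\infty$ limit come from (ii), (iii). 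The serious step is $V^{p}_{-,rc}\hookrightarrow U^{q}$ for $p<q$: for $v\in V^{p}_{-,rc}$ and each $n$ I would build a partition by the stopping-time rule $t^{(n)}_{0}=-\infty$ and $t^{(n)}_{k+1}=\inf\{t>t^{(n)}_{k}:\|v(t)-v(t^{(n)}_{k})\|_{L^{2}}\ge 2^{-n}\|v\|_{V^{p}}\}$, noting that the number of steps is $\lesssim 2^{np}$ because each step contributes $\gtrsim(2^{-n}\|v\|_{V^{p}})^{p}$ to the $p$-variation; the step function $v_{n}:=\sum_{k}\ee_{[t^{(n)}_{k},t^{(n)}_{k+1})}\,v(t^{(n)}_{k})$ converges to $v$ pointwise by right-continuity, and $v_{n+1}-v_{n}$ is a constant multiple of a $U^{q}$-atom assembled from $\lesssim 2^{np}$ functions of $L^{2}$-size $\lesssim 2^{-n}\|v\|_{V^{p}}$, so $\|v_{n+1}-v_{n}\|_{U^{q}}\lesssim(2^{np}2^{-nq})^{1/q}\|v\|_{V^{p}}=2^{-n(1-p/q)}\|v\|_{V^{p}}$; summing the telescoping series $v=v_{0}+\sum_{n}(v_{n+1}-v_{n})$ in $U^{q}$ then gives $\|v\|_{U^{q}}\lesssim\|v\|_{V^{p}}$. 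I expect this last embedding to be the main obstacle: making the stopping-time construction rigorous, controlling the number of steps, and establishing $v_{n}\to v$ (where right-continuity and the convention $v(\infty)=0$ are essential) is the delicate part, and it is precisely where the strict gain $p<q$ is consumed.
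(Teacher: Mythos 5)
The paper itself offers no proof of this proposition; it is imported verbatim from Hadac--Herr--Koch \cite{HHK09} (Propositions 2.2, 2.4 and Corollary 2.6), and your outline is a faithful reconstruction of their arguments, including the only genuinely nontrivial ingredient: the stopping-time decomposition behind $V^{p}_{-,rc}\hookrightarrow U^{q}$, where the strict inequality $p<q$ is consumed exactly as you say. Two local steps, however, are not correct as written and need repair.

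First, in (iv) you claim that $\sum_{n}\|v(t_{n+1})-v(t_{n})\|_{L^{2}}^{p}\le\|v\|_{V^{p}}^{p}$ ``shows $(v(t_{n}))_{n}$ is Cauchy.'' For $p>1$ this implication is false for a general sequence: $\ell^{p}$-summability of the consecutive increments does not give $\ell^{1}$-summability, and partial sums with increments of size $n^{-s}$, $1/p<s\le 1$, can oscillate across a fixed gap forever while $\sum_{n}n^{-sp}<\infty$. The hypothesis you actually have is stronger --- the $V^{p}$ bound holds over every finite sub-partition --- and that is what must be used: if $(v(t_{n}))$ were not Cauchy, one could select $s_{1}<s_{2}<\cdots$ with $\|v(s_{2k})-v(s_{2k-1})\|_{L^{2}}\ge\epsilon$ for all $k$, and the partition $\{s_{1},\dots,s_{2K}\}$ forces $\|v\|_{V^{p}}^{p}\ge K\epsilon^{p}$ for every $K$, a contradiction. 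Second, in the atom bound for $U^{p}\hookrightarrow V^{p}_{-,rc}$ you pass from an arbitrary partition to its refinement by the jump times; but for $p>1$ refining a partition can strictly decrease the variation sum (three collinear values $0,1,2$ with $p=2$ give $4$ for the coarse pair and $2$ after refinement), so the coarse sum is not controlled by the refined one. The fix is to bound an arbitrary partition directly: consecutive partition points lying in the same step of the atom contribute $0$, and otherwise the increment is $\phi_{l}-\phi_{k}$ with $\|\phi_{l}-\phi_{k}\|^{p}\le 2^{p-1}(\|\phi_{l}\|^{p}+\|\phi_{k}\|^{p})$, each index occurring in at most two increments, whence $\|a\|_{V^{p}}^{p}\le 2^{p}\sum_{k}\|\phi_{k}\|_{L^{2}}^{p}=2^{p}$. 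Neither correction changes the architecture of your proof, and the rest (completeness, the right-continuity/limit statements via uniform convergence of atomic series, and the $U^{q}$ embedding of $V^{p}_{-,rc}$) is sound.
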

For Proposition~\ref{upvpprop} and its proof, see Propositions 2.2, 2.4 and Corollary 2.6 in \cite{HHK09}.
\begin{thm}\label{duality}
Let $1<p<\infty$ and $1/p+1/p'=1$. 
If $u\in V^{1}_{-,rc}$ be absolutely continuous on every compact intervals, then
\[
||u||_{U^{p}}=\sup_{v\in V^{p'}, ||v||_{V^{p'}}=1}\left|\int_{-\infty}^{\infty}(u'(t),v(t))_{L^{2}(\R^{d})}dt\right|.
\]
\end{thm}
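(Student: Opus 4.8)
The plan is to derive the identity from the Hadac--Herr--Koch duality $(U^{p})^{*}\cong V^{p'}$ (\cite{HHK09}), realized by the bilinear form $B(u,v):=\int_{-\I}^{\I}(u'(t),v(t))_{L^{2}}\,dt$, together with the canonical isometric embedding $U^{p}\hookrightarrow(U^{p})^{**}$. Two preliminary observations make the statement meaningful: first, Proposition~\ref{upvpprop}(v) gives $V^{1}_{-,rc}\hookrightarrow U^{p}$ for $p>1$, so the $u$ in the statement automatically belongs to $U^{p}$, and being absolutely continuous on compact intervals, of bounded variation (since $u\in V^{1}$), and vanishing at $-\I$, it satisfies $u'\in L^{1}(\R;L^{2})$ with $u(t)-u(s)=\int_{s}^{t}u'(r)\,dr$; second, the convention $v(\I)=0$ built into the $V^{p'}$-norm forces $\|v(t)\|_{L^{2}}\le\|v\|_{V^{p'}}$ for all $t$, i.e. $V^{p'}\hookrightarrow L^{\I}_{t}L^{2}_{x}$, so that together with $u'\in L^{1}$ the integral $B(u,v)$ converges absolutely for every $v\in V^{p'}$.

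The first step is the estimate $|B(u,v)|\le\|u\|_{U^{p}}\|v\|_{V^{p'}}$ for $u\in U^{p}$ and $v$ a step function, where on step functions $B(u,v)$ is understood through the partial-summation formula $B(u,v)=\sum_{k}(u(t_{k})-u(t_{k-1}),\phi_{k-1})_{L^{2}}$, which for absolutely continuous $u$ coincides with $\int(u',v)\,dt$ by the fundamental theorem of calculus on each subinterval. By the atomic decomposition and the definition of the $U^{p}$-norm it suffices to treat one $U^{p}$-atom $u$ with generating profiles $\phi_{0},\dots,\phi_{K-1}$ obeying $\sum_{k}\|\phi_{k}\|_{L^{2}}^{p}=1$; refining to a common partition, only the jump times of the atom contribute to $B(u,v)$, and a second partial summation rewrites $B(u,v)=\sum_{k}(\phi_{k},v(s_{k})-v(s_{k+1}))_{L^{2}}$ for some increasing sequence $(s_{k})$, so that Hölder's inequality in $k$ together with $\sum_{k}\|v(s_{k})-v(s_{k+1})\|_{L^{2}}^{p'}\le\|v\|_{V^{p'}}^{p'}$ gives $|B(u,v)|\le\|v\|_{V^{p'}}$. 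An arbitrary $v\in V^{p'}$ is a pointwise limit of step functions obtained by sampling $v$ along refining partitions, which increases neither the $V^{p'}$- nor the $L^{\I}_{t}L^{2}_{x}$-norm; since $u'\in L^{1}$, dominated convergence then extends the estimate to all $v\in V^{p'}$ and shows $B(u,v)=\int(u',v)\,dt$ there, whence $\sup_{\|v\|_{V^{p'}}=1}|B(u,v)|\le\|u\|_{U^{p}}$.

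For the reverse inequality I would show that $T:V^{p'}\to(U^{p})^{*}$, $T(v)(u):=B(u,v)$, is surjective: given $\Lambda\in(U^{p})^{*}$, one defines $\mu(t)\in L^{2}$ by $(\mu(t),\phi)_{L^{2}}:=-\Lambda(\1_{[t,\I)}\phi)$ for $\phi\in L^{2}$; since $\1_{[t,\I)}\phi$ equals $\|\phi\|_{L^{2}}$ times a $U^{p}$-atom one gets $\|\mu(t)\|_{L^{2}}\le\|\Lambda\|$, testing $\Lambda$ against atoms assembled from near-optimal profiles over a prescribed partition (and using the $\ell^{p}$-normalization of atoms) gives $\mu\in V^{p'}$ with $\|\mu\|_{V^{p'}}\le\|\Lambda\|$, and a direct computation shows $T(\mu)=\Lambda$ on $U^{p}$-atoms and hence on all of $U^{p}$ by density. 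Combined with the bound of the previous paragraph, $T$ is an isometric isomorphism, so for the given $u$ the canonical isometry into the bidual yields
\[
\|u\|_{U^{p}}=\|u\|_{(U^{p})^{**}}=\sup_{v\in V^{p'},\,\|v\|_{V^{p'}}\le1}|B(u,v)|=\sup_{v\in V^{p'},\,\|v\|_{V^{p'}}=1}|B(u,v)|,
\]
and since $B(u,v)=\int(u',v)\,dt$ for absolutely continuous $u$, this is the asserted identity. The main obstacle is precisely the surjectivity of $T$, equivalently the ``$\ge$'' inequality: unlike the Hölder estimate above it cannot be reached by a density argument, since step functions---although pointwise dense in $V^{p'}$---are not dense in the $V^{p'}$-norm, so one must build the representing function $\mu$ explicitly from the action of $\Lambda$ on the elementary atoms $\1_{[t,\I)}\phi$ and then control its $p'$-variation by a separate duality argument against optimally chosen $U^{p}$-atoms.
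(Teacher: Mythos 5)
Your proposal is correct and follows essentially the same route as the paper, which gives no independent argument but defers to Theorem 2.8, Proposition 2.10 and Remark 2.11 of Hadac--Herr--Koch: namely the isometric duality $(U^{p})^{*}\cong V^{p'}$ realized by the partition pairing (the H\"older bound on atoms via summation by parts, and surjectivity via the representing function $\mu(t)$ built from the action of $\Lambda$ on the elementary atoms $\mathbf{1}_{[t,\infty)}\phi$), the identification of that pairing with $\int(u',v)\,dt$ for absolutely continuous $u\in V^{1}_{-,rc}$, and the canonical isometric embedding into the bidual. There is no genuinely different idea and no gap to report.
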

For Theorem~\ref{duality} and its proof, see Theorem 2.8 Proposition 2.10 and Remark 2.11 in \cite{HHK09}.
\begin{defn}
Let $1\leq p<\infty$. For $\sigma \in \R$, we define
\[
U^{p}_{\sigma}:=\{ u:\R\rightarrow L^{2}|\ e^{-it \sigma \Delta}u\in U^{p}\}
\]
with the norm $||u||_{U^{p}_{\sigma}}:=||e^{-it \sigma \Delta}u||_{U^{p}}$, 
\[
V^{p}_{\sigma}:=\{ v:\R\rightarrow L^{2}|\ e^{-it \sigma \Delta}v\in V^{p}\}
\]
with the norm $||v||_{V^{p}_{\sigma}}:=||e^{-it \sigma \Delta}v||_{V^{p}}$ 
and similarly the closed subspace $V^{p}_{-,rc,\sigma}$.
\end{defn}
\begin{rem}
We note that $||\overline{u}||_{U^{p}_{\sigma}}=||u||_{U^{p}_{-\sigma}}$ and $||\overline{v}||_{V^{p}_{\sigma}}=||v||_{V^{p}_{-\sigma}}$. 
\end{rem}
\begin{prop}\label{projest}
Let $1< p<\infty$. We have
\begin{align}
&||Q_{M}^{\sigma}u||_{L_{t}^{p}L_{x}^{2}}\lesssim M^{-1/p}||u||_{V^{p}_{\sigma}},\ ||Q_{\geq M}^{\sigma}u||_{L_{t}^{p}L_{x}^{2}}\lesssim M^{-1/p}||u||_{V^{p}_{\sigma}},\label{highMproj}\\
&||Q_{<M}^{\sigma}u||_{V^{p}_{\sigma}}\lesssim ||u||_{V^{p}_{\sigma}},\ \ ||Q_{\geq M}^{\sigma}u||_{V^{p}_{\sigma}}\lesssim ||u||_{V^{p}_{\sigma}},\label{Vproj}\\
&||Q_{<M}^{\sigma}u||_{U^{p}_{\sigma}}\lesssim ||u||_{U^{p}_{\sigma}},\ \ ||Q_{\geq M}^{\sigma}u||_{U^{p}_{\sigma}}\lesssim ||u||_{U^{p}_{\sigma}}. \label{Uproj}
\end{align}
\end{prop}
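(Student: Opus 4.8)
The plan is to prove Proposition~\ref{projest} by reducing all three groups of estimates to the single modulation projection $Q_M^\sigma$ and then exploiting the fact that, after conjugating by $e^{it\sigma\Delta}$, the operator $Q_M^\sigma$ becomes a convolution in time with an $L^1$-normalized kernel. First I would record the observation that $e^{-it\sigma\Delta}Q_M^\sigma u = P_M^{(t)}(e^{-it\sigma\Delta}u)$ in the sense that the modulation cutoff $\psi_M(\tau+\sigma|\xi|^2)$ becomes, after removing the free flow, the pure time-frequency cutoff $\psi_M(\tau)$ acting on $e^{-it\sigma\Delta}u$. Concretely, writing $w(t):=e^{-it\sigma\Delta}u(t)$, we have $e^{-it\sigma\Delta}Q_M^\sigma u(t) = (\mathcal{F}_t^{-1}\psi_M \ast_t w)(t)$, and the convolution kernel $K_M:=\mathcal{F}_t^{-1}\psi_M$ satisfies $\|K_M\|_{L^1_t}\lesssim 1$ uniformly in $M$ (by scaling $K_M(t)=M K_1(Mt)$). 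This single identity is the engine for everything.

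For \eqref{highMproj}, the strategy is: by Proposition~\ref{upvpprop}(v) it suffices to prove the first inequality (the second follows by summing a geometric series in the dyadic pieces $Q_N^\sigma$ for $N\ge M$, since the $L^p_tL^2_x$ norms of the pieces will carry a factor $N^{-1/p}$). For the $Q_M^\sigma$ bound I would use that $V^p_\sigma$ controls the $L^\infty_t L^2_x$ norm and, more importantly, that a function of bounded $p$-variation has its time-frequency content at scale $M$ controlled: one writes $Q_M^\sigma u$ against a test function and integrates by parts, or more directly one uses the kernel representation together with the elementary bound that for $v\in V^p$, $\|v(\cdot+h)-v(\cdot)\|_{L^\infty_t L^2_x}$ is controlled by $\|v\|_{V^p}$ and the kernel $K_M$ has $\int |K_M(s)|\,|s|^{?}\,ds$-type moments that convert the $1/M$ time scale into the gain $M^{-1/p}$. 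The cleanest route is the one used in \cite{HHK09}: partition $\R$ into intervals of length $\sim 1/M$, on each the free-flow-removed function is nearly constant up to an error measured by the $p$-variation, and summing the $p$-th powers of these errors over the $\sim M\cdot(\text{length})$ intervals in any fixed window gives the $M^{-1/p}$ factor; combined with $\|K_M\|_{L^1}\lesssim1$ this yields the claim.

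For \eqref{Vproj} and \eqref{Uproj}, the point is that $Q_{<M}^\sigma$ and $Q_{\geq M}^\sigma$ are bounded on $V^p_\sigma$ and $U^p_\sigma$ uniformly in $M$. On $V^p_\sigma$ this follows again from the kernel picture: $e^{-it\sigma\Delta}Q_{<M}^\sigma u = \mathcal{F}_t^{-1}\chi(\cdot/M)\ast_t w$, and convolution in time with a fixed $L^1$-normalized kernel does not increase the $p$-variation by more than a constant — indeed for any partition $\{t_k\}$, $\sum_k\|(\Phi\ast w)(t_k)-(\Phi\ast w)(t_{k-1})\|^p = \sum_k\|\int\Phi(s)(w(t_k-s)-w(t_{k-1}-s))\,ds\|^p \le \|\Phi\|_{L^1}^p\sup_{s}\sum_k\|w(t_k-s)-w(t_{k-1}-s)\|^p \le \|\Phi\|_{L^1}^p\|w\|_{V^p}^p$ by Minkowski's inequality in the form $\|\int\|\cdot\| \le \int\|\cdot\|$. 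For $U^p_\sigma$ one argues on atoms: $Q_{<M}^\sigma$ applied to a $U^p$-atom (after removing the free flow, a step function) is again a convolution of a step function with $\Phi$; since $U^p\hookrightarrow V^1_{-,rc}$-type duality or the direct atomic decomposition of $\Phi\ast(\text{atom})$ shows the result is a bounded multiple of a $U^p$ element, the triangle inequality over the atomic decomposition closes it. The second halves of \eqref{Vproj}, \eqref{Uproj} follow since $Q_{\geq M}^\sigma = Id - Q_{<M}^\sigma$.

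The main obstacle I expect is the sharp $M^{-1/p}$ gain in \eqref{highMproj}: bounding $Q_M^\sigma u$ in $L^p_tL^2_x$ by $\|u\|_{V^p_\sigma}$ with the correct power of $M$ requires genuinely using the $p$-variation structure (an $L^\infty_tL^2_x$ bound alone only gives the loss-free estimate without the $M^{-1/p}$), and the interplay between the $L^p$ in time, the $p$-variation, and the $1/M$ time localization scale has to be tracked carefully — this is exactly Corollary~2.18 of \cite{HHK09}, and I would follow that argument, with the kernel-convolution reformulation above making the constant uniform in $M$. Everything else is soft: the containment $U^p_\sigma\hookrightarrow V^p_\sigma$ from Proposition~\ref{upvpprop}(v), the geometric summation over dyadic modulation pieces, and Minkowski's integral inequality for the convolution bounds.
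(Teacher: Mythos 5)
Your proposal is correct and is essentially the argument the paper itself relies on: the paper gives no independent proof of Proposition~\ref{projest} but simply cites Corollary~2.18 of \cite{HHK09} (remarking only that \eqref{highMproj} for $p\neq 2$ is proved ``the same way''), and your reduction of $Q^{\sigma}_{M}$ to a temporal convolution after conjugating by $e^{it\sigma\Delta}$, the bound $\|w(\cdot)-w(\cdot-s)\|_{L^{p}_{t}L^{2}_{x}}\le |s|^{1/p}\|w\|_{V^{p}}$ obtained by partitioning $\R$ into intervals of length $|s|$, the moment bound $\int |K_{M}(s)|\,|s|^{1/p}\,ds\lesssim M^{-1/p}$, and the Minkowski argument for \eqref{Vproj} reproduce exactly that proof. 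The one step you should tighten is \eqref{Uproj}: the ``direct atomic decomposition'' you offer as an alternative does not quite work, because $\Phi\ast a$ for a $U^{p}$-atom $a$ is an integral (not countable) superposition of translated atoms and the map $s\mapsto a(\cdot-s)$ is not continuous into $U^{p}$ (translating a step function by $\varepsilon$ produces a difference whose $U^{p}$-norm stays of order one as $\varepsilon\to 0$), so the Riemann-sum/Bochner argument fails; you should commit to the duality route you mention in passing --- the adjoint of temporal convolution with $K$ is convolution with $K(-\cdot)$, which is bounded on $V^{p'}$ by your Minkowski computation, and the $U^{p}$--$V^{p'}$ duality of \cite{HHK09} then yields \eqref{Uproj}.
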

For Proposition~\ref{projest} and its proof, 
see Corollary 2.18 in \cite{HHK09}. (\ref{highMproj}) for $p\neq 2$ can be proved by the same way. 
\begin{prop}\label{multiest}
Let 
\[
T_{0}:L^{2}(\R^{d})\times \cdots \times L^{2}(\R^{d})\rightarrow L^{1}_{loc}(\R^{d})
\]
be a $m$-linear operator and $I\subset \R$ be an interval. Assume that for some $1\leq p, q< \infty$
\[
||T_{0}(e^{it\sigma_{1}\Delta}\phi_{1},\cdots ,e^{it\sigma_{m}\Delta}\phi_{m})||_{L^{p}_{t}(I :L^{q}_{x}(\R^{d}))}\lesssim \prod_{i=1}^{m}||\phi_{i}||_{L^{2}(\R^{d})}.
\]
Then, there exists $T:U^{p}_{\sigma_{1}}\times \cdots \times U^{p}_{\sigma_{m}}\rightarrow L^{p}_{t}(I ;L^{q}_{x}(\R^{d}))$ satisfying
\[
|T(u_{1},\cdots ,u_{m})||_{L^{p}_{t}(I ;L^{q}_{x}(\R^{d}))}\lesssim \prod_{i=1}^{m}||u_{i}||_{U^{p}_{\sigma_{i}}}
\]
such that $T(u_{1},\cdots ,u_{m})(t)(x)=T_{0}(u_{1}(t),\cdots ,u_{m}(t))(x)$ a.e.
\end{prop}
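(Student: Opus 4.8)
The plan is to prove this by the usual transference (atomic decomposition) argument for $U^{p}$-spaces, i.e.\ the transference principle of \cite{HHK09}. I would first \emph{define} the extension by $T(u_{1},\dots ,u_{m})(t):=T_{0}(u_{1}(t),\dots ,u_{m}(t))$. Since Proposition~\ref{upvpprop}(v) gives $U^{p}_{\sigma_{i}}\hookrightarrow L^{\infty}_{t}(\R ;L^{2}_{x})$, each $u_{i}(t)$ lies in $L^{2}(\R^{d})$, so the right-hand side is a well-defined element of $L^{1}_{loc}(\R^{d})$ for (a.e.)\ $t$, measurable in $t$, and it agrees with $T_{0}$ in the pointwise sense demanded by the statement by construction; hence the whole content of the proposition is the quantitative bound. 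By $m$-linearity of $T_{0}$ it suffices to prove $\|T(a_{1},\dots ,a_{m})\|_{L^{p}_{t}(I;L^{q}_{x})}\lesssim 1$ whenever each $a_{i}$ is a $U^{p}_{\sigma_{i}}$-atom, and then to pass to general elements by summing over atomic decompositions.

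For the atom step, write $a_{i}(t)=\sum_{k}\ee_{E^{i}_{k}}(t)\,e^{it\sigma_{i}\Delta}\phi^{i}_{k}$ with $E^{i}_{k}:=[t^{i}_{k-1},t^{i}_{k})$ pairwise disjoint in $k$ and $\sum_{k}\|\phi^{i}_{k}\|_{L^{2}}^{p}=1$. Expanding the product via $m$-linearity gives, for a.e.\ $t$,
\[
T(a_{1},\dots ,a_{m})(t)=\sum_{k_{1},\dots ,k_{m}}\Big(\prod_{i=1}^{m}\ee_{E^{i}_{k_{i}}}(t)\Big)\,T_{0}\big(e^{it\sigma_{1}\Delta}\phi^{1}_{k_{1}},\dots ,e^{it\sigma_{m}\Delta}\phi^{m}_{k_{m}}\big),
\]
and the time-sets $\bigcap_{i=1}^{m}E^{i}_{k_{i}}$ are pairwise disjoint as $(k_{1},\dots ,k_{m})$ runs over all multi-indices. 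Since $p<\infty$, the $p$-th power of the $L^{p}_{t}$-norm is additive over disjoint time-sets, so on each cell $T(a_{1},\dots ,a_{m})$ equals the corresponding single term $T_{0}(e^{it\sigma_{1}\Delta}\phi^{1}_{k_{1}},\dots )$ and, bounding its integral over each cell by the integral over all of $I$,
\[
\|T(a_{1},\dots ,a_{m})\|_{L^{p}_{t}(I;L^{q}_{x})}^{p}\le \sum_{k_{1},\dots ,k_{m}}\big\|T_{0}\big(e^{it\sigma_{1}\Delta}\phi^{1}_{k_{1}},\dots ,e^{it\sigma_{m}\Delta}\phi^{m}_{k_{m}}\big)\big\|_{L^{p}_{t}(I;L^{q}_{x})}^{p}\lesssim \sum_{k_{1},\dots ,k_{m}}\prod_{i=1}^{m}\|\phi^{i}_{k_{i}}\|_{L^{2}}^{p},
\]
where the last inequality is the hypothesis applied term by term. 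Expanding the product of sums (distributivity for nonnegative terms), the right-hand side equals $\prod_{i=1}^{m}\big(\sum_{k}\|\phi^{i}_{k}\|_{L^{2}}^{p}\big)=1$, which is the desired atom bound. This elementary product-of-sums identity is the one place where the multilinear case behaves exactly like the linear one.

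Finally, for general $u_{i}=\sum_{j}\lambda^{i}_{j}a^{i}_{j}\in U^{p}_{\sigma_{i}}$, I would use that each atomic series converges in $L^{\infty}_{t}L^{2}_{x}$ (Proposition~\ref{upvpprop}(v)), hence pointwise in $t$ in $L^{2}_{x}$, together with $m$-linearity of $T_{0}$, to write $T(u_{1},\dots ,u_{m})(t)=\sum_{j_{1},\dots ,j_{m}}\big(\prod_{i}\lambda^{i}_{j_{i}}\big)\,T_{0}(a^{1}_{j_{1}}(t),\dots ,a^{m}_{j_{m}}(t))$ for a.e.\ $t$; then the triangle inequality in $L^{p}_{t}L^{q}_{x}$ and the atom bound give
\[
\|T(u_{1},\dots ,u_{m})\|_{L^{p}_{t}(I;L^{q}_{x})}\le \sum_{j_{1},\dots ,j_{m}}\prod_{i=1}^{m}|\lambda^{i}_{j_{i}}|\,\big\|T(a^{1}_{j_{1}},\dots ,a^{m}_{j_{m}})\big\|_{L^{p}_{t}(I;L^{q}_{x})}\lesssim \prod_{i=1}^{m}\Big(\sum_{j_{i}}|\lambda^{i}_{j_{i}}|\Big),
\]
and taking the infimum over all atomic decompositions of the $u_{i}$ yields $\|T(u_{1},\dots ,u_{m})\|_{L^{p}_{t}(I;L^{q}_{x})}\lesssim \prod_{i}\|u_{i}\|_{U^{p}_{\sigma_{i}}}$; absolute convergence of this multi-indexed series (from $\sum_{j}|\lambda^{i}_{j}|<\infty$ and the uniform atom bound) justifies all the rearrangements at once. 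I expect the genuinely delicate point to be not the estimate itself — which is soft, the only real ingredients being disjointness of the partition cells and the product-of-sums identity — but the soft-analysis bookkeeping in this last paragraph: verifying that $T_{0}$ is continuous enough in the $L^{2}$-topology of each factor (or that the a priori bound upgrades its regularity on the relevant subspace) to commute with the infinite atomic sums, so that $T(u_{1},\dots ,u_{m})$ is well defined, independent of the chosen decompositions, lies in $L^{p}_{t}(I;L^{q}_{x})$, and equals a.e.\ $T_{0}(u_{1}(\cdot ),\dots ,u_{m}(\cdot ))$.
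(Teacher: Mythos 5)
Your proof is correct and is essentially the argument the paper relies on: the paper does not prove this proposition itself but defers to Proposition 2.19 of \cite{HHK09}, whose proof is exactly this transference argument (disjointness of the partition cells $\bigcap_i E^i_{k_i}$ for the atom bound, the product-of-sums identity, then $\ell^1$-summation over atomic decompositions). Your closing remark correctly isolates the only genuinely soft point --- identifying the extension $T$ a.e.\ with the pointwise application of $T_0$ for general $u_i$, which needs some compatibility of $T_0$ with the $L^\infty_t L^2_x$-limits of the atomic series --- and this point is passed over equally lightly in the cited source.
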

For Proposition~\ref{multiest} and its proof, see Proposition 2.19 in \cite{HHK09}.
\begin{prop}[Strichartz estimate]\label{Stri_est}
Let $\sigma \in \R\backslash \{0\}$ and $(p,q)$ be an admissible pair of exponents for the Schr\"odinger equation, i.e. $2\leq q\leq 2d/(d-2)$ 
{\rm (}$2\leq q< \infty$ if $d=2$, $2\leq q\leq \infty$ if $d=1${\rm )}, $2/p =d(1/2-1/q)$. Then, we have
\[
||e^{it\sigma \Delta}\varphi ||_{L_{t}^{p}L_{x}^{q}}\lesssim ||\varphi ||_{L^{2}_{x}}
\]
for any $\varphi \in L^{2}(\R^{d})$. 
\end{prop}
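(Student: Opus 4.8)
The plan is to run the classical $TT^{*}$ argument, reducing the space-time estimate to a fixed-time dispersive bound followed by a fractional integration in time. First I would remove the parameter $\sigma$: since $e^{it\sigma\Delta}\varphi(x)$ equals $\big(e^{is\Delta}\varphi\big)(x)$ evaluated at $s=\sigma t$ (and $e^{it\sigma\Delta}\overline{\varphi}=\overline{e^{-it\sigma\Delta}\varphi}$ disposes of the sign of $\sigma$), the substitution $s=\sigma t$ changes the $L^{p}_{t}$ norm only by the factor $|\sigma|^{-1/p}$, which for the fixed nonzero constants $\alpha,\beta,\gamma$ of our system is harmless. So it suffices to bound $e^{it\Delta}$. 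For this one recalls the unitarity $\|e^{it\Delta}\varphi\|_{L^{2}_{x}}=\|\varphi\|_{L^{2}_{x}}$ and the dispersive estimate $\|e^{it\Delta}\varphi\|_{L^{\infty}_{x}}\lesssim|t|^{-d/2}\|\varphi\|_{L^{1}_{x}}$, the latter coming from the explicit Gaussian kernel of the free Schr\"odinger group; Riesz--Thorin interpolation then gives the fixed-time bound $\|e^{it\Delta}\varphi\|_{L^{q}_{x}}\lesssim|t|^{-d(1/2-1/q)}\|\varphi\|_{L^{q'}_{x}}$ for $2\leq q\leq\infty$, where $1/q+1/q'=1$.

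Next I would set $T\varphi(t):=e^{it\Delta}\varphi$, so that $TT^{*}F(t)=\int_{\R}e^{i(t-s)\Delta}F(s)\,ds$, and use the standard duality reduction: $\|T\varphi\|_{L^{p}_{t}L^{q}_{x}}\lesssim\|\varphi\|_{L^{2}_{x}}$ is equivalent to $\|TT^{*}F\|_{L^{p}_{t}L^{q}_{x}}\lesssim\|F\|_{L^{p'}_{t}L^{q'}_{x}}$. By Minkowski's inequality and the fixed-time estimate,
\[
\|TT^{*}F(t)\|_{L^{q}_{x}}\lesssim\int_{\R}|t-s|^{-d(1/2-1/q)}\|F(s)\|_{L^{q'}_{x}}\,ds ,
\]
and the admissibility relation $2/p=d(1/2-1/q)$ is exactly what makes convolution with $|t|^{-d(1/2-1/q)}$ bounded from $L^{p'}_{t}(\R)$ to $L^{p}_{t}(\R)$ by the Hardy--Littlewood--Sobolev inequality, provided $0<d(1/2-1/q)<1$, i.e.\ in the non-endpoint range $2<p<\infty$. (The case $q=2$, $p=\infty$ is the trivial consequence of $L^{2}$ conservation and Minkowski.)

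The step I expect to be the genuine obstacle is the endpoint $p=2$, which arises only when $d\geq3$ and $q=2d/(d-2)$: there $d(1/2-1/q)=1$, the Hardy--Littlewood--Sobolev step degenerates, and the naive argument fails. For that case I would instead invoke the endpoint Strichartz estimate of Keel and Tao, whose proof replaces the plain fractional integration by a dyadic decomposition into the blocks $\{|t-s|\sim2^{j}\}$ together with a bilinear real-interpolation argument using both the dispersive and the $L^{2}$ bounds on each block. Since the admissible range in the proposition restricts $q<\infty$ when $d=2$ and allows $q\leq\infty$ when $d=1$, the endpoint $p=2$ does not occur for $d\leq2$, so only the elementary Hardy--Littlewood--Sobolev argument is needed there.
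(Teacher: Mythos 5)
Your argument is correct and complete: the rescaling in $\sigma$ (with the conjugation identity $e^{it\sigma\Delta}\overline{\varphi}=\overline{e^{-it\sigma\Delta}\varphi}$ to handle the sign), the $TT^{*}$ reduction to the dispersive bound plus Hardy--Littlewood--Sobolev in the non-endpoint range, and the appeal to Keel--Tao for the endpoint $p=2$, $q=2d/(d-2)$ when $d\geq 3$ together cover exactly the admissible range stated. The paper offers no proof of this proposition --- it is quoted as the classical Strichartz estimate --- so there is nothing to compare against; what you have written is the standard proof.
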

By Proposition~\ref{multiest} and ~\ref{Stri_est}, we have following:
\begin{cor}\label{UV_Stri}
Let $\sigma \in \R\backslash \{0\}$ and $(p,q)$ be an admissible pair of exponents for the Schr\"odinger equation, i.e. $2\leq q\leq 2d/(d-2)$ 
{\rm (}$2\leq q< \infty$ if $d=2$, $2\leq q\leq \infty$ if $d=1${\rm )}, $2/p =d(1/2-1/q)$. Then, we have
\begin{align}
&||u||_{L_{t}^{p}L_{x}^{q}}\lesssim ||u||_{U_{\sigma}^{p}},\ \ u\in U^{p}_{\sigma},\label{U_Stri}\\
&||u||_{L_{t}^{p}L_{x}^{q}}\lesssim ||u||_{V_{\sigma}^{\widetilde{p}}},\ \ u\in V^{\widetilde{p}}_{\sigma}, \ (1\leq \widetilde{p}<p).\label{V_Stri}
\end{align}
\end{cor}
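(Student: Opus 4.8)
The plan is to obtain both estimates as formal consequences of results already recorded: the transference principle of Proposition~\ref{multiest}, the free Strichartz estimate of Proposition~\ref{Stri_est}, and, for the $V^{\widetilde p}$ endpoint, the chain of embeddings in Proposition~\ref{upvpprop}(v). No genuinely new analysis is needed; the work is bookkeeping, with a single point requiring care.

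First I would establish \eqref{U_Stri}. Apply Proposition~\ref{multiest} with $m=1$, interval $I=\R$, $\sigma_{1}=\sigma$, exponents $(p,q)$ as in the statement, and $T_{0}:L^{2}(\R^{d})\to L^{1}_{loc}(\R^{d})$ the inclusion map $\phi\mapsto\phi$ (legitimate since $L^{2}(\R^{d})\subset L^{1}_{loc}(\R^{d})$). For this choice of $T_{0}$ the hypothesis of Proposition~\ref{multiest} is precisely the bound $\|e^{it\sigma\Delta}\phi\|_{L^{p}_{t}L^{q}_{x}}\lesssim\|\phi\|_{L^{2}_{x}}$, which is Proposition~\ref{Stri_est}. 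Hence there is $T:U^{p}_{\sigma}\to L^{p}_{t}(\R;L^{q}_{x})$ with $\|T(u)\|_{L^{p}_{t}L^{q}_{x}}\lesssim\|u\|_{U^{p}_{\sigma}}$ and $T(u)(t)(x)=u(t)(x)$ for a.e.\ $(t,x)$, i.e.\ $T(u)=u$ as an element of $L^{p}_{t}L^{q}_{x}$; this is \eqref{U_Stri}.

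Next I would deduce \eqref{V_Stri}. Since $\widetilde p<p$, conjugating the continuous embedding $V^{\widetilde p}_{-,rc}\hookrightarrow U^{p}$ from Proposition~\ref{upvpprop}(v) by the unitary group $e^{it\sigma\Delta}$ yields a continuous embedding $V^{\widetilde p}_{-,rc,\sigma}\hookrightarrow U^{p}_{\sigma}$ with operator norm independent of $\sigma$; combined with \eqref{U_Stri} this already gives $\|v\|_{L^{p}_{t}L^{q}_{x}}\lesssim\|v\|_{V^{\widetilde p}_{\sigma}}$ for every $v\in V^{\widetilde p}_{-,rc,\sigma}$. To pass to a general $v\in V^{\widetilde p}_{\sigma}$, set $\widetilde v:=e^{-it\sigma\Delta}v\in V^{\widetilde p}$: a function of bounded $\widetilde p$-variation has one-sided limits everywhere, at most countably many discontinuities, and a limit $w:=\lim_{t\to-\infty}\widetilde v(t)$ in $L^{2}$ with $\|w\|_{L^{2}}\lesssim\|v\|_{V^{\widetilde p}_{\sigma}}$. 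Replacing $\widetilde v$ by its right-continuous representative alters $v$ only on a Lebesgue-null set, and $\widetilde v-w$ then lies in $V^{\widetilde p}_{-,rc}$, so $e^{it\sigma\Delta}(\widetilde v-w)\in V^{\widetilde p}_{-,rc,\sigma}$. Applying the previous case to $e^{it\sigma\Delta}(\widetilde v-w)$ and the free Strichartz estimate to $e^{it\sigma\Delta}w$ gives
$\|v\|_{L^{p}_{t}L^{q}_{x}}\le\|e^{it\sigma\Delta}(\widetilde v-w)\|_{L^{p}_{t}L^{q}_{x}}+\|e^{it\sigma\Delta}w\|_{L^{p}_{t}L^{q}_{x}}\lesssim\|v\|_{V^{\widetilde p}_{\sigma}}$,
which is \eqref{V_Stri}. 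The only step that is not entirely mechanical is this last reduction from $V^{\widetilde p}_{\sigma}$ to its right-continuous, $-\infty$-vanishing subspace, and that is where the structure theory of bounded $p$-variation functions enters; everything else is a direct invocation of Propositions~\ref{multiest}, \ref{Stri_est} and \ref{upvpprop}.
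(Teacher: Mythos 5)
Your argument is correct and is essentially the derivation the paper intends (it offers no written proof, only the remark that the corollary follows from Propositions~\ref{multiest} and~\ref{Stri_est}): transference with $m=1$ and $T_{0}=\mathrm{id}$ gives \eqref{U_Stri}, and the embedding $V^{\widetilde p}_{-,rc,\sigma}\hookrightarrow U^{p}_{\sigma}$ from Proposition~\ref{upvpprop}(v) gives \eqref{V_Stri}. Your extra step reducing a general $v\in V^{\widetilde p}_{\sigma}$ to the right-continuous, $-\infty$-vanishing subspace (subtracting the limit at $-\infty$, which is controlled since the convention $v(\infty):=0$ forces $\sup_{t}\|v(t)\|_{L^{2}}\le\|v\|_{V^{\widetilde p}}$) is a legitimate and slightly more careful treatment of a point the paper leaves implicit.
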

\begin{prop}\label{intpol}
Let $q>1$, $E$ be a Banach space and $T:U^{q}_{\sigma}\rightarrow E$ be a bounded, linear operator 
with $||Tu||_{E}\leq C_{q}||u||_{U^{q}_{\sigma}}$ for all $u\in U^{q}_{\sigma}$.  
In addition, assume that for some $1\leq p<q$ there exists $C_{p}\in (0,C_{q}]$ such that the estimate $||Tu||_{E}\leq C_{p}||u||_{U^{p}_{\sigma}}$ holds true for all $u\in U^{p}_{\sigma}$. Then, $T$ satisfies the estimate
\[
||Tu||_{E}\lesssim C_{p}\left( 1+\ln \frac{C_{q}}{C_{p}}\right) ||u||_{V^{p}_{\sigma}},\ \ u\in V^{p}_{-,rc,\sigma}, 
\]
where implicit constant depends only on $p$ and $q$.
\end{prop}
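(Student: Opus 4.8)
The plan is to reduce the estimate to the atomic structure of $U^p$ via a Littlewood--Paley-type decomposition of $v\in V^p_{-,rc,\sigma}$ along its own variation. Concretely, given $v\in V^p_{-,rc,\sigma}$, I would first normalize $\|v\|_{V^p_\sigma}=1$ and, for each dyadic level $M=2^m$, define the stopping-time partition $\{t^M_k\}$ obtained by letting $t^M_0=-\infty$ and choosing $t^M_{k}$ to be the first time after $t^M_{k-1}$ at which $\|v(t)-v(t^M_{k-1})\|_{L^2}\ge M$ (working with $e^{-it\sigma\Delta}v$ so that everything happens in $U^p$, $V^p$ proper; I will suppress the $\sigma$-conjugation below). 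The bounded $p$-variation of $v$ forces this partition to be finite, with at most $\sim M^{-p}$ intervals, and it lets me write the telescoping decomposition $v=\sum_{M} v_M$ where $v_M$ is built from the jumps of $v$ across consecutive $t^M_k$ that were not already resolved at the coarser scale $2M$. Each $v_M$ is, up to a scalar of size $\sim M$, a sum of $\lesssim M^{-p}$ pieces each of which is (a constant times) a single $U^p$-atom, so $\|v_M\|_{U^p}\lesssim M\cdot M^{-1}=1$ while $\|v_M\|_{U^q}\lesssim M\cdot (M^{-p})^{1/q}=M^{1-p/q}$.

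With this decomposition in hand, I would split the dyadic sum at a threshold $M_0$ to be optimized: for the low-frequency (large-scale, small $M$) part I use the $U^q$ bound, $\sum_{M\le M_0}\|Tv_M\|_E\lesssim C_q\sum_{M\le M_0}M^{1-p/q}\lesssim C_q M_0^{1-p/q}$ since $1-p/q>0$; for the high-frequency part I use the $U^p$ bound together with a refined count showing $\sum_{M>M_0}\|v_M\|_{U^p}\lesssim 1+\ln(1/M_0)$-type behavior, giving $\sum_{M> M_0}\|Tv_M\|_E\lesssim C_p\big(1+\big|\ln M_0\big|\big)$. Here the key quantitative input is that $\sum_M \min(1, M^{1-p/q})$ would diverge logarithmically if summed naively, so one cannot afford to lose a factor on every scale in the $U^p$ regime; instead one argues that the total number of "new" atoms across all scales $M>M_0$ is controlled by $\sum_{M>M_0} M^{-p}$ times appropriate weights and that the only genuine loss is the number of dyadic scales between $M_0$ and $\|v\|_{V^p}=1$, which is $\sim|\ln M_0|$. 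Choosing $M_0=C_p/C_q\le 1$ balances the two contributions and produces exactly $\|Tv\|_E\lesssim C_p(1+\ln(C_q/C_p))\|v\|_{V^p_\sigma}$.

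I expect the main obstacle to be the bookkeeping in the decomposition step — making precise that $v=\sum_M v_M$ converges in the right sense (using right-continuity and $\lim_{t\to-\infty}v(t)=0$, i.e. that $v\in V^p_{-,rc,\sigma}$, so that the telescoping sums actually reconstruct $v$ and hit every jump exactly once), and establishing the two-sided atom counts $\|v_M\|_{U^p}\lesssim 1$, $\|v_M\|_{U^q}\lesssim M^{1-p/q}$ with the sharp dependence needed to get only a single logarithm rather than a power. The linearity and boundedness of $T$ on each $U^r_\sigma$ then let me sum $\|Tv\|_E\le\sum_M\|Tv_M\|_E$ termwise, and the rest is the elementary optimization in $M_0$ described above. (This is the $U^p$--$V^p$ analogue of the classical real-interpolation trick; the argument is essentially that of Hadac--Herr--Koch, which I would cite for the decomposition lemma if it is available there, and otherwise carry out directly as sketched.)
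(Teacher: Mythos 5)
The paper gives no proof of this proposition; it simply cites Proposition 2.20 of \cite{HHK09}, and your sketch is essentially a reconstruction of that cited argument: a stopping-time decomposition of $v$ into dyadic variation scales $v_M$ with $\|v_M\|_{U^p}\lesssim 1$ and $\|v_M\|_{U^q}\lesssim M^{1-p/q}$, followed by applying the $U^q$ bound on the fine scales $M\le M_0$ and the $U^p$ bound on the $\sim\ln(1/M_0)$ coarse scales. In \cite{HHK09} the same mechanism is packaged as a two-term splitting $v=u+w$ with $\|u\|_{U^p}$ controlled by the number of retained scales and $\|w\|_{U^q}$ exponentially small in that number, but this is only a cosmetic difference from your dyadic sum split at $M_0$.

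Two concrete points. First, your choice $M_0=C_p/C_q$ does not balance the terms: it makes the fine-scale contribution $C_qM_0^{1-p/q}=C_p(C_q/C_p)^{p/q}$, a polynomial rather than logarithmic loss. You need $C_qM_0^{1-p/q}\lesssim C_p$, i.e. $M_0\sim(C_p/C_q)^{q/(q-p)}$; this still gives $\ln(1/M_0)\sim\tfrac{q}{q-p}\ln(C_q/C_p)$, hence the stated estimate with an implicit constant depending only on $p$ and $q$. Second, your phrasing that $v_M$ is ``a sum of $\lesssim M^{-p}$ pieces each of which is a single $U^p$-atom'' should be avoided: summing $M^{-p}$ atoms of size $M$ by the triangle inequality would only give $\|v_M\|_{U^p}\lesssim M^{1-p}$. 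What is actually needed (and what your subsequent norm computation implicitly uses) is that $v_M$ is a \emph{single} step function with $\lesssim M^{-p}$ steps each of $L^2$-size $\lesssim M$, so that $\|v_M\|_{U^p}\le\bigl(\sum_k\|\phi_k\|_{L^2}^p\bigr)^{1/p}\lesssim 1$ and $\|v_M\|_{U^q}\lesssim M^{1-p/q}$. Establishing that the scale-$M$ increments relative to scale $2M$ really have size $\lesssim M$ and count $\lesssim M^{-p}$, and that the telescoping sum converges to $v$ using right-continuity and $\lim_{t\to-\infty}v(t)=0$, is the bookkeeping you correctly identify as the main obstacle; it is carried out in the decomposition lemma of \cite{HHK09}.
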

For Proposition~\ref{intpol} and its proof, 
see Proposition 2.20 in \cite{HHK09}.
\section{Bilinear Strichartz estimates \label{be_for_2d}}
%
%
In this section, implicit constants in $\ll$ actually depend on $\sigma_{1}$, $\sigma_{2}$. 
\begin{lemm}\label{L2be}
Let $d\in \N$, $s_{c}=d/2-1$, $b>1/2$ and $\sigma_{1}$, $\sigma_{2}\in \R \backslash \{0\}$. 
For any dyadic numbers $L$, $H\in 2^{\Z}$ with $L\ll H$, we have
\begin{equation}\label{L2be_est}
||(P_{H}u_{1})(P_{L}u_{2})||_{L^{2}_{tx}}\lesssim L^{s_{c}}\left(\frac{L}{H}\right)^{1/2}||P_{H}u_{1}||_{X_{\sigma_{1}}^{0,b}}||P_{L}u_{2}||_{X_{\sigma_{2}}^{0,b}}, 
\end{equation}
where $||u||_{X_{\sigma}^{0,b}}:=||\langle \tau +\sigma |\xi|^{2}\rangle^{b}\widetilde{u}||_{L^{2}_{\tau \xi}}$.
\end{lemm}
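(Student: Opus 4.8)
The plan is to reduce the bilinear estimate to a Fourier-space convolution estimate and then exploit the transversality of the two Schrödinger flows with distinct coefficients $\sigma_1 \neq 0$ and $\sigma_2 \neq 0$. Writing $f_i := \widetilde{P_{H}u_1}$ and $g := \widetilde{P_{L}u_2}$ (with the appropriate frequency localizations), the product $(P_Hu_1)(P_Lu_2)$ has spacetime Fourier transform given by the convolution $f * g$, and by Plancherel it suffices to bound $\|f*g\|_{L^2_{\tau\xi}}$. I would first absorb the weights $\langle \tau + \sigma_i|\xi|^2\rangle^b$ into new $L^2$ functions $F_i$, $G$, so that the problem becomes a weighted convolution estimate with $b>1/2$ giving summable (indeed $\ell^2$-to-$\ell^1$ via Cauchy--Schwarz) control over the modulation variables. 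The standard move is then a Cauchy--Schwarz in the dual variables: $\|f*g\|_{L^2}^2$ is controlled by $\sup$ over the output $(\tau,\xi)$ of the measure of the set
\[
\{(\tau_1,\xi_1) : \langle \tau_1+\sigma_1|\xi_1|^2\rangle \lesssim 1,\ \langle (\tau-\tau_1)+\sigma_2|\xi-\xi_1|^2\rangle \lesssim 1,\ |\xi_1|\sim H,\ |\xi-\xi_1|\sim L\}
\]
times the product of $L^2$ norms — after carefully handling the $b>1/2$ tails by dyadic decomposition in the two modulations and summing.

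The heart of the matter is estimating the measure of that resonance set. Fixing $\xi$ and $\tau$, the two modulation constraints confine $\tau_1$ to an interval of length $O(1)$, so the $\tau_1$-integration contributes $O(1)$; what remains is to bound the $(d$-dimensional$)$ measure of the set of $\xi_1$ with $|\xi_1|\sim H$, $|\xi-\xi_1|\sim L$, for which there exists an admissible $\tau_1$. Adding the two modulation expressions eliminates $\tau_1$ and forces
\[
\bigl|\,\tau + \sigma_1|\xi_1|^2 + \sigma_2|\xi-\xi_1|^2\,\bigr| \lesssim 1,
\]
i.e. $\xi_1$ must lie within $O(1)$ of a quadric hypersurface. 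Completing the square in $\xi_1$, the gradient of $\sigma_1|\xi_1|^2 + \sigma_2|\xi-\xi_1|^2$ in $\xi_1$ equals $2\sigma_1\xi_1 - 2\sigma_2(\xi-\xi_1)$, which on the region $|\xi_1|\sim H \gg L \gtrsim |\xi-\xi_1|$ has size $\sim H$ (since $\sigma_1,\sigma_2\neq 0$ and the high-frequency term dominates). Thus the quadric slab has thickness $\sim 1/H$ in the radial-type direction, while the remaining $d-1$ directions are constrained to the ball $|\xi-\xi_1|\lesssim L$; hence the measure is $\lesssim L^{d-1}/H$. Taking square roots gives the factor $L^{(d-1)/2}H^{-1/2} = L^{d/2-1}(L/H)^{1/2} = L^{s_c}(L/H)^{1/2}$, which is exactly the claimed gain.

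The main obstacle I anticipate is making the geometric measure bound fully rigorous and uniform: one must verify that on the whole localized region the gradient really is $\gtrsim H$ (the case $\sigma_1$ and $\sigma_2$ of opposite sign requires checking that the would-be cancellation $2\sigma_1\xi_1 = 2\sigma_2(\xi-\xi_1)$ cannot occur when $|\xi_1|\sim H \gg L$), and one must integrate a thin-neighbourhood-of-a-hypersurface bound correctly, e.g. by slicing along the direction of steepest ascent or by an implicit function / coarea argument, keeping the $L^{d-1}$ from the transverse directions. A secondary technical point is the summation over the dyadic modulation scales $\langle \tau_i+\sigma_i|\xi_i|^2\rangle \sim M_i$: since $b>1/2$, each sum $\sum_{M_i} M_i^{1/2-b} \cdot M_i^{\,?}$ must converge, and one distributes the measure bound's $O(1)$ in $\tau_1$-length across $\min(M_1,M_2)^{1/2}$ or similar; this is routine but must be written so that the final constant depends only on $b$, $\sigma_1$, $\sigma_2$. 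Once the measure bound and the modulation summation are in place, Lemma~\ref{L2be} follows directly.
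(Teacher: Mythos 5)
Your proposal is correct and follows essentially the same route as the paper: the heart of both arguments is the observation that the gradient of $\sigma_{1}|\xi_{1}|^{2}+\sigma_{2}|\xi-\xi_{2}|^{2}$ (equivalently, the Jacobian $2|\sigma_{1}\xi_{1}^{(1)}-\sigma_{2}\xi_{2}^{(1)}|$ in the paper's change of variables, after restricting to a region where one coordinate of $\xi_{1}$ is comparable to $H$) has size $\sim H$ when $L\ll H$, while the low frequency confines $d-1$ transverse directions to a box of side $\sim L$, yielding the measure bound $L^{d-1}/H$. The only cosmetic difference is that the paper keeps the weights $\langle\theta_{i}\rangle^{-b}$ and closes with a single Cauchy--Schwarz in the shifted modulation variables (using $b>1/2$ for integrability) rather than your dyadic decomposition in the modulations, which also works.
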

\begin{proof}
For the case $d=2$ and $(\sigma_{1},\sigma_{2})=(1,\pm 1)$, the estimate (\ref{L2be_est}) is proved by 
Colliander, Delort, Kenig, and Staffilani (\cite{CDKS01}, Lemma 1). 
The proof for general case as following is similar to their argument. 

We put $g_{1}(\tau_{1},\xi_{1}):=\langle \tau_{1}+\sigma_{1}|\xi_{1}|^{2}\rangle^{b}\widetilde{P_{H}u_{1}}(\tau_{1},\xi_{1})$,\ 
$g_{2}(\tau_{2},\xi_{2}):=\langle \tau_{2}+\sigma_{2}|\xi_{2}|^{2}\rangle^{b}\widetilde{P_{L}u_{2}}(\tau_{2},\xi_{2})$ 
and $A_{N}:=\{\xi \in \R^{d} |N/2\leq |\xi |\leq 2N\}$
for a dyadic number $N$. 
By the Plancherel's theorem and the duality argument, it is enough to prove the estimate
\begin{equation}\label{Iestimate}
\begin{split}
I&:=\left|\int_{\R}\int_{\R}\int_{A_{L}}\int_{A_{H}}f(\tau_{1}+\tau_{2}, \xi_{1}+\xi_{2})
\frac{g_{1}(\tau_{1},\xi_{1})}{\langle \tau_{1}+\sigma_{1}|\xi_{1}|^{2}\rangle^{b}}
\frac{g_{2}(\tau_{2},\xi_{2})}{\langle \tau_{2}+\sigma_{2}|\xi_{2}|^{2}\rangle^{b}}d\xi_{1}d\xi_{2}d\tau_{1}d\tau_{2}\right|\\
&\lesssim \frac{L^{(d-1)/2}}{H^{1/2}}||f||_{L^{2}_{\tau \xi}}||g_{1}||_{L^{2}_{\tau \xi}}||g_{2}||_{L^{2}_{\tau \xi}}
\end{split}
\end{equation}
for $f\in L^{2}_{\tau \xi}$. 
We change the variables $(\tau_{1}, \tau_{2})\mapsto (\theta_{1}, \theta_{2})$ as 
$\theta_{i}=\tau_{i}+\sigma_{i}|\xi_{i}|^{2}$ $(i=1,2)$
and put 
\[
\begin{split}
F(\theta_{1},\theta_{2}, \xi_{1},\xi_{2})&:=f(\theta_{1}+\theta_{2}-\sigma_{1}|\xi_{1}|^{2}-\sigma_{2}|\xi_{2}|^{2}, \xi_{1}+\xi_{2}),\\
G_{i}(\theta_{i},\xi_{i})&:=g_{i}(\theta_{i}-\sigma_{i}|\xi_{i}|^{2},\xi_{i}),\ \ (i=1,2). 
\end{split}
\]
Then, we have
\[
\begin{split}
I&\leq \int_{\R}\int_{\R}\frac{1}{\langle \theta_{1}\rangle^{b}\langle \theta_{2}\rangle^{b}}
\left(\int_{A_{L}}\int_{A_{H}}|F(\theta_{1}, \theta_{2}, \xi_{1}, \xi_{2})G_{1}(\theta_{1}, \xi_{1})G_{2}(\theta_{2}, \xi_{2})|d\xi_{1}d\xi_{2}\right) d\theta_{1}d\theta_{2}\\
&\lesssim \int_{\R}\int_{\R}\frac{1}{\langle \theta_{1}\rangle^{b}\langle \theta_{2}\rangle^{b}}
\left(\int_{A_{L}}\int_{A_{H}}|F(\theta_{1}, \theta_{2}, \xi_{1}, \xi_{2})|^{2}d\xi_{1}d\xi_{2}\right)^{1/2}||G_{1}(\theta_{1}, \cdot )||_{L^{2}_{\xi}}||G_{2}(\theta_{2}, \cdot )||_{L^{2}_{\xi}}d\theta_{1}d\theta_{2}
\end{split}
\]
by the Cauchy-Schwarz inequality. 
For $1\leq j\leq d$, we put
\[
A_{H}^{j}:=\{\xi_{1}=(\xi_{1}^{(1)},\cdots, \xi_{1}^{(d)})\in \R^{d}|\ H/2\leq |\xi_{1}|\leq 2H,\ |\xi_{1}^{(j)}|\geq H/(2\sqrt{d})\}
\]
and 
\[
K_{j}(\theta_{1},\theta_{2}):=\int_{A_{L}}\int_{A_{H}^{j}}|F(\theta_{1}, \theta_{2}, \xi_{1}, \xi_{2})|^{2}d\xi_{1}d\xi_{2}. 
\]
We consider only the estimate for $K_{1}$. The estimates for other $K_{j}$ are obtained by the same way. 

Assume $d\geq 2$. 
By changing the variables $(\xi_{1}, \xi_{2})=(\xi_{1}^{(1)},\cdots, \xi_{1}^{(d)}, \xi_{2}^{(1)},\cdots, \xi_{2}^{(d)}) \mapsto (\mu, \nu, \eta )$ as
\begin{equation}\label{ch_var}
\begin{cases}
\mu =\theta_{1}+\theta_{2}-\sigma_{1}|\xi_{1}|^{2}-\sigma_{2}|\xi_{2}|^{2}\in \R , \\
\nu =\xi_{1}+\xi_{2}\in \R^{d},\\
\eta=(\xi_{2}^{(2)}\cdots, \xi_{2}^{(d)})\in \R^{d-1}, 
\end{cases}
\end{equation}
we have 
\[
d\mu d\nu d\eta=2|\sigma_{1}\xi_{1}^{(1)}-\sigma_{2}\xi_{2}^{(1)}|d\xi_{1}d\xi_{2}
\]
and
\[
F(\theta_{1}, \theta_{2}, \xi_{1}, \xi_{2})=f(\mu, \nu ).
\]
We note that $|\sigma_{1}\xi_{1}^{(1)}-\sigma_{2}\xi_{2}^{(1)}|\sim H$ for any $(\xi_{1}, \xi_{2})\in A_{H}^{1}\times A_{L}$ with $L\ll H$. 
Furthermore, $\xi_{2}\in A_{L}$ implies that $\eta \in [-2L, 2L]^{d-1}$.  Therefore,  we obtain
\[
K_{1}(\theta_{1},\theta_{2})\lesssim 
\frac{1}{H}\int_{[-2L, 2L]^{d-1}} \int_{\R^{d}} \int_{\R} |f(\mu,\nu)|^{2} d\mu d\nu d\eta 
\sim\frac{L^{d-1}}{H}||f||_{L^{2}_{\tau \xi}}^{2}.
\]
As a result, we have
\[
\begin{split}
I&\lesssim \int_{\R}\int_{\R}\frac{1}{\langle \theta_{1}\rangle^{b}\langle \theta_{1}\rangle^{b}}\left(\sum_{j=1}^{d}K_{j}(\theta_{1},\theta_{2})\right)^{1/2}||G_{1}(\theta_{1}, \cdot )||_{L^{2}_{\xi}}||G_{2}(\theta_{2}, \cdot )||_{L^{2}_{\xi}}d\theta_{1}d\theta_{2}\\
&\lesssim \frac{L^{(d-1)/2}}{H^{1/2}}||f||_{L^{2}_{\tau \xi}}||g_{1}||_{L^{2}_{\tau \xi}}||g_{2}||_{L^{2}_{\tau \xi}} \\
\end{split}
\]
by the Cauchy-Schwarz inequality and changing the variables $(\theta_{1}, \theta_{2})\mapsto (\tau_{1}, \tau_{2})$ as $\theta_{i}=\tau_{i}+\sigma_{i}|\xi_{i}|^{2}$ $(i=1,2)$. 

For $d=1$, we obtain the same result by changing the variables $(\xi_{1}, \xi_{2}) \mapsto (\mu, \nu)$ as
$\mu =\theta_{1}+\theta_{2}-\sigma_{1}|\xi_{1}|^{2}-\sigma_{2}|\xi_{2}|^{2}$, $\nu =\xi_{1}+\xi_{2}$ instead of (\ref{ch_var}). 
\end{proof}
\begin{cor}\label{UVbilinear}
Let $d\in \N$, $s_{c}=d/2-1$ and $\sigma_{1}$, $\sigma_{2}\in \R \backslash \{0\}$. \\
{\rm (i)}\ If $d\geq 2$, then for any dyadic numbers $L$, $H\in 2^{\Z}$ with $L\ll H$, we have
\begin{align}
&||(P_{H}u_{1})(P_{L}u_{2})||_{L^{2}_{tx}}
\lesssim L^{s_{c}}\left(\frac{L}{H}\right)^{1/2}||P_{H}u_{1}||_{U_{\sigma_{1}}^{2}}||P_{L}u_{2}||_{U_{\sigma_{2}}^{2}},\label{Ubilinear}\\
&||(P_{H}u_{1})(P_{L}u_{2})||_{L^{2}_{tx}}
\lesssim L^{s_{c}}\left(\frac{L}{H}\right)^{1/2}
\left( 1+\ln \frac{H}{L}\right)^{2}||P_{H}u_{1}||_{V_{\sigma_{1}}^{2}}||P_{L}u_{2}||_{V_{\sigma_{2}}^{2}}.\label{Vbilinear}
\end{align}
{\rm (ii)}\ If $d=1$, then for any dyadic numbers $L$, $H\in 2^{\Z}$ with $L\ll H$ , we have 
\begin{align}
&||(P_{H}u_{1})(P_{L}u_{2})||_{L^{2}([0,1]\times \R)}
\lesssim \frac{1}{H^{1/2}}||P_{H}u_{1}||_{U_{\sigma_{1}}^{2}}||P_{L}u_{2}||_{U_{\sigma_{2}}^{2}},\label{Ubilinear_1d}\\
&||(P_{H}u_{1})(P_{L}u_{2})||_{L^{2}([0,1]\times \R)}
\lesssim \min\left\{L^{1/6}, \frac{\left( 1+\ln H\right)^{2}}{H^{1/2}}
\right\}||P_{H}u_{1}||_{V_{\sigma_{1}}^{2}}||P_{L}u_{2}||_{V_{\sigma_{2}}^{2}}.\label{Vbilinear_1d}
\end{align}
\end{cor}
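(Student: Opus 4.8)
The plan is to obtain the $U^2$-bilinear estimates \eqref{Ubilinear} and \eqref{Ubilinear_1d} from Lemma~\ref{L2be} via the transference principle (Proposition~\ref{multiest}), and then to pass to the $V^2$-estimates \eqref{Vbilinear}, \eqref{Vbilinear_1d} by the interpolation estimate of Proposition~\ref{intpol} together with some cheap Strichartz bounds from Corollary~\ref{UV_Stri}.

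For \eqref{Ubilinear} I would first record the free-solution bilinear estimate
\[
\bigl\|(P_He^{it\sigma_1\Delta}\phi_1)(P_Le^{it\sigma_2\Delta}\phi_2)\bigr\|_{L^2_{tx}(\R^{1+d})}\lesssim L^{s_c}\Bigl(\tfrac LH\Bigr)^{1/2}\|\phi_1\|_{L^2}\|\phi_2\|_{L^2}\qquad(d\ge2,\ L\ll H),
\]
which follows from the Fourier-side computation in the proof of Lemma~\ref{L2be} with the modulation variables $\theta_i=\tau_i+\sigma_i|\xi_i|^2$ frozen at $0$ (equivalently it is the classical bilinear refinement of Strichartz, cf. \cite{CDKS01}); note $L^{s_c}(L/H)^{1/2}=L^{(d-1)/2}/H^{1/2}$, which is exactly the prefactor produced by the transversality Jacobian $|\sigma_1\xi_1^{(1)}-\sigma_2\xi_2^{(1)}|\sim H$. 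Since $e^{it\sigma\Delta}$ commutes with $P_N$, Proposition~\ref{multiest} with $m=2$, $p=q=2$ applied to $(f_1,f_2)\mapsto(P_Hf_1)(P_Lf_2)$ upgrades this to \eqref{Ubilinear}. For $d=1$ only the interval $[0,1]$ occurs, so the global estimate is not needed: choosing $\eta\in C_0^\infty(\R)$ with $\eta\equiv1$ on $[0,1]$ and using $\|\eta e^{it\sigma\Delta}\phi\|_{X_\sigma^{0,b}}\lesssim_{\eta,b}\|\phi\|_{L^2}$ (and that $P_N$ commutes with $\eta(t)$ and $e^{it\sigma\Delta}$), Lemma~\ref{L2be} with $d=1$, $s_c=-1/2$ gives $\|(P_He^{it\sigma_1\Delta}\phi_1)(P_Le^{it\sigma_2\Delta}\phi_2)\|_{L^2([0,1]\times\R)}\lesssim H^{-1/2}\|\phi_1\|_{L^2}\|\phi_2\|_{L^2}$, and Proposition~\ref{multiest} with $I=[0,1]$ yields \eqref{Ubilinear_1d}.

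For \eqref{Vbilinear} I would apply Proposition~\ref{intpol} once in each variable. Besides \eqref{Ubilinear} (the good constant $C_p=L^{s_c}(L/H)^{1/2}$), I need a crude $U^4\times U^4\to L^2$ bilinear bound: by H\"older, the Strichartz estimate \eqref{U_Stri} for the admissible pair $(4,\tfrac{2d}{d-1})$, and Bernstein's inequality to control the low-frequency factor in $L^4_tL^{2d}_x$ (costing $L^{s_c}$), one gets $\|(P_Hu_1)(P_Lu_2)\|_{L^2_{tx}}\lesssim L^{s_c}\|P_Hu_1\|_{U^4_{\sigma_1}}\|P_Lu_2\|_{U^4_{\sigma_2}}$, i.e.\ $C_q=L^{s_c}$, so $C_q/C_p=(H/L)^{1/2}$. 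Fixing $u_2\in U^2_{\sigma_2}$ and applying Proposition~\ref{intpol} to $u_1\mapsto(P_Hu_1)(P_Lu_2)$ replaces $\|P_Hu_1\|_{U^2_{\sigma_1}}$ by $\lesssim(1+\ln\tfrac HL)\|P_Hu_1\|_{V^2_{\sigma_1}}$; then fixing $u_1\in V^2_{\sigma_1}$ (now using \eqref{V_Stri} to place $P_Hu_1\in L^4_tL^{2d/(d-1)}_x$ in the crude bound, keeping $u_2$ in $U^4$) and applying Proposition~\ref{intpol} again in $u_2$ produces a second factor $(1+\ln\tfrac HL)$, giving \eqref{Vbilinear}. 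The logarithmic half of \eqref{Vbilinear_1d} is obtained identically, starting from \eqref{Ubilinear_1d} ($C_p=H^{-1/2}$) and the crude bound $\|(P_Hu_1)(P_Lu_2)\|_{L^2([0,1]\times\R)}\lesssim\|P_Hu_1\|_{V^2_{\sigma_1}}\|P_Lu_2\|_{U^8_{\sigma_2}}$, coming from $\|\,\cdot\,\|_{L^4_t([0,1])L^2_x}\le\|\,\cdot\,\|_{L^8_tL^4_x}\|\,\cdot\,\|_{L^8_tL^4_x}$ and the admissible pair $(8,4)$, so that $C_q\sim1$ and $C_q/C_p\sim H^{1/2}$. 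The remaining bound $\|(P_Hu_1)(P_Lu_2)\|_{L^2([0,1]\times\R)}\lesssim L^{1/6}\|P_Hu_1\|_{V^2_{\sigma_1}}\|P_Lu_2\|_{V^2_{\sigma_2}}$ needs no bilinear refinement: H\"older, the $(6,6)$ Strichartz estimate, \eqref{V_Stri}, and Bernstein applied to $P_Lu_2$ (placed in $L^6_tL^\infty_x$ through $L^6_x$, at cost $L^{1/3}$) give $\lesssim L^{1/3}\|P_Hu_1\|_{V^2_{\sigma_1}}\|P_Lu_2\|_{V^2_{\sigma_2}}$, while interpolating $P_Lu_2$ between $L^2_x$ and $L^6_x$ gives the $L$-free bound $\lesssim\|P_Hu_1\|_{V^2_{\sigma_1}}\|P_Lu_2\|_{V^2_{\sigma_2}}$; since $\min(L^{1/3},1)\le L^{1/6}$, taking the minimum with the logarithmic bound yields \eqref{Vbilinear_1d}.

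The estimates are thus consequences of the $U^p$/$V^p$ machinery, and the only genuine care is the constant bookkeeping in the double application of Proposition~\ref{intpol}: one must check $C_p\lesssim C_q$ at each step — replacing $C_q$ by $\max(C_p,C_q)$ in the bounded range of $H/L$ where this can fail by an absolute factor, e.g.\ because $(L/H)^{1/2}(1+\ln(H/L))$ is bounded for $H/L\ge1$ — and that $\ln(C_q/C_p)\lesssim1+\ln(H/L)$ (resp.\ $1+\ln H$), so that the two interpolation losses combine into exactly $(1+\ln(H/L))^2$ (resp.\ $(1+\ln H)^2$). One should also note that Proposition~\ref{intpol} delivers its conclusion on $V^2_{-,rc,\sigma}$, which is the subspace actually used in building the solution spaces, so \eqref{Vbilinear} and \eqref{Vbilinear_1d} are read on $V^2_{-,rc,\sigma_i}$; and that throughout, the projections $P_N$ are bounded on $U^p_\sigma$ and $V^p_\sigma$ and commute with $e^{it\sigma\Delta}$, being spatial Fourier multipliers.
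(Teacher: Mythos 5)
Your proposal is correct and follows essentially the same route as the paper: the free-solution bilinear Strichartz estimate transferred to $U^2$ via Proposition~\ref{multiest}, crude $U^4$ (resp.\ $U^8$) bounds from H\"older, Strichartz and Bernstein/Sobolev, a double application of Proposition~\ref{intpol} for the logarithmic $V^2$ estimates, and a direct H\"older--Strichartz argument for the $L^{1/6}$ half of \eqref{Vbilinear_1d}. The only (harmless) deviations are that you extract the free-solution estimate directly from the Fourier computation in Lemma~\ref{L2be} rather than via the paper's rescaling-in-$T$ argument, and you obtain the $L^{1/6}$ bound as a minimum of $L^{1/3}$ and $L^{0}$ bounds instead of the paper's single $(12,3)$-Strichartz step with $\dot{W}^{1/6,3}(\R)\hookrightarrow L^{6}(\R)$.
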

\begin{proof}
To obtain (\ref{Ubilinear}) and (\ref{Ubilinear_1d}), we use the argument of the proof of Corollary 2.21\ (27) in \cite{HHK09}.
Let $\phi_{1}$, $\phi_{2}\in L^{2}(\R^{d})$ and define $\phi_{j}^{\lambda}(x):=\phi_{j}(\lambda x)$ $(j=1,2)$
for $\lambda \in \R$. By using the rescaling $(t,x)\mapsto (\lambda^{2}t, \lambda x)$, we have
\[
\begin{split}
&||P_{H}(e^{it\sigma_{1}\Delta}\phi_{1})P_{L}(e^{it\sigma_{2}\Delta}\phi_{2})||_{L^{2}([-T,T]\times \R^{d})}\\
&=\lambda^{s_{c}+2}||P_{\lambda H}(e^{it\sigma_{1}\Delta}\phi_{1}^{\lambda})P_{\lambda L}(e^{it\sigma_{2}\Delta}\phi_{2}^{\lambda})||_{L^{2}([-\lambda^{-2}T,\lambda^{-2}T]\times \R^{d})}. 
\end{split}
\]
Therefore by putting $\lambda =\sqrt{T}$ and Lemma~\ref{L2be}, we have
\[
\begin{split}
&||P_{H}(e^{it\sigma_{1}\Delta}\phi_{1})P_{L}(e^{it\sigma_{2}\Delta}\phi_{2})||_{L^{2}([-T,T]\times \R^{d})}\\
&\lesssim \sqrt{T}^{2(s_{c}+1)}L^{s_{c}}\left(\frac{L}{H}\right)^{1/2}||P_{\sqrt{T}H}\phi_{1}^{\sqrt{T}}||_{L^{2}_{x}}||P_{\sqrt{T}L}\phi_{2}^{\sqrt{T}}||_{L^{2}_{x}}\\
&=L^{s_{c}}\left(\frac{L}{H}\right)^{1/2}||P_{H}\phi_{1}||_{L^{2}_{x}}||P_{L}\phi_{2}||_{L^{2}_{x}}.
\end{split}
\]
Let $T\rightarrow \infty$, then we obtain 
\[
||P_{H}(e^{it\sigma_{1}\Delta}\phi_{1})P_{L}(e^{it\sigma_{2}\Delta}\phi_{2})||_{L^{2}_{tx}}
\lesssim L^{s_{c}}\left(\frac{L}{H}\right)^{1/2}||P_{H}\phi_{1}||_{L^{2}_{x}}||P_{L}\phi_{2}||_{L^{2}_{x}}
\]
and (\ref{Ubilinear}), (\ref{Ubilinear_1d}) follow from proposition~\ref{multiest}. 

To obtain (\ref{Vbilinear}) and (\ref{Vbilinear_1d}), we first prove the $U^{4}$ estimate for $d\geq 2$ and $U^{8}$ estimate for $d=1$. 
Assume $d\geq 2$. By the Cauchy-Schwarz inequality, the Sobolev embedding $\dot{W}^{s_{c}, 2d/(d-1)}(\R^{d})\hookrightarrow L^{2d}(\R^{d})$ and (\ref{U_Stri}), we have
\begin{equation}\label{U4_est}
\begin{split}
||(P_{H}u_{1})(P_{L}u_{2})||_{L^{2}_{tx}}&\lesssim L^{s_{c}}||P_{H}u_{1}||_{L^{4}_{t}L^{2d/(d-1)}_{x}}||P_{L}u_{2}||_{L^{4}_{t}L^{2d/(d-1)}_{x}}\\
&\lesssim L^{s_{c}}||P_{H}u_{1}||_{U^{4}_{\sigma_{1}}}||P_{L}u_{2}||_{U^{4}_{\sigma_{2}}}
\end{split}
\end{equation}
for any dyadic numbers $L$, $H\in 2^{\Z}$. 
While if $d=1$, then by the H\"older's inequality and (\ref{U_Stri}), we have
\begin{equation}\label{U8_est}
\begin{split}
||(P_{H}u_{1})(P_{L}u_{2})||_{L^{2}([0,1]\times \R )}
&\leq ||\ee_{[0,1)}||_{L^{4}_{t}}||P_{H}u_{1}||_{L^{8}_{t}L^{4}_{x}}||P_{L}u_{2}||_{L^{8}_{t}L^{4}_{x}}\\
&\lesssim ||P_{H}u_{1}||_{U^{8}_{\sigma_{1}}}||P_{L}u_{2}||_{U^{8}_{\sigma_{2}}} 
\end{split}
\end{equation}
for any dyadic numbers $L$, $H\in 2^{\Z}$. 
We use the interpolation between (\ref{Ubilinear}) and (\ref{U4_est}) via 
Proposition~\ref{intpol}. Then, we get (\ref{Vbilinear}) 
by the same argument of the proof of Corollary 2.21\ (28) in \cite{HHK09}. 
The estimate (\ref{Vbilinear_1d}) follows from 
\begin{equation}\label{U12_est}
\begin{split}
||(P_{H}u_{1})(P_{L}u_{2})||_{L^{2}([0,1]\times \R )}
&\leq ||\ee_{[0,1)}||_{L^{3}_{t}}L^{1/6}||P_{H}u_{1}||_{L^{12}_{t}L^{3}_{x}}||P_{L}u_{2}||_{L^{12}_{t}L^{3}_{x}}\\
&\lesssim L^{1/6}||P_{H}u_{1}||_{V^{2}_{\sigma_{1}}}||P_{L}u_{2}||_{V^{2}_{\sigma_{2}}}. 
\end{split}
\end{equation}
and the interpolation between (\ref{Ubilinear_1d}) and (\ref{U8_est}),  
where we used the H\"older's inequality, the Sobolev embedding $\dot{W}^{1/6,3}(\R )\hookrightarrow L^{6}(\R )$ 
and (\ref{V_Stri}) to obtain (\ref{U12_est}). 
\end{proof}
%
%
\section{Time global estimates for $d\geq 2$ \label{global_tri_est_2d}}
In this and next section, implicit constants in $\ll$ actually depend on $\sigma_{1}$, $\sigma_{2}$, $\sigma_{3}$.  
\begin{lemm}\label{modul_est}
Let $d\in \N$. We assume that $\sigma_{1}$, $\sigma_{2}$, $\sigma_{3} \in \R \backslash \{0\}$ satisfy $(\sigma_{1}+\sigma_{2})(\sigma_{2}+\sigma_{3})(\sigma_{3}+\sigma_{1})\neq 0$ and $(\tau_{1},\xi_{1})$, $(\tau_{2}, \xi_{2})$, $(\tau_{3}, \xi_{3})\in \R\times \R^{d}$ satisfy $\tau_{1}+\tau_{2}+\tau_{3}=0$, $\xi_{1}+\xi_{2}+\xi_{3}=0$.\\ 
{\rm (i)} If there exist $1\leq i,j\leq 3$ such that $|\xi_{i}|\ll |\xi_{j}|$, then we have
\begin{equation}\label{modulation_est}
\max_{1\leq j\leq 3}|\tau_{j}+\sigma_{j}|\xi_{j}|^{2}|
\gtrsim \max_{1\leq j\leq 3}|\xi_{j}|^{2}. 
\end{equation}
{\rm (ii)} If $\sigma_{1}\sigma_{2}\sigma_{3}(1/\sigma_{1}+1/\sigma_{2}+1/\sigma_{3})>0$, then we have {\rm (\ref{modulation_est})}. 
\end{lemm}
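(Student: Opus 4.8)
The plan is to reduce both parts to a lower bound on the \emph{resonance function}
\[
H:=\sigma_{1}|\xi_{1}|^{2}+\sigma_{2}|\xi_{2}|^{2}+\sigma_{3}|\xi_{3}|^{2}.
\]
Since $\tau_{1}+\tau_{2}+\tau_{3}=0$, we have $(\tau_{1}+\sigma_{1}|\xi_{1}|^{2})+(\tau_{2}+\sigma_{2}|\xi_{2}|^{2})+(\tau_{3}+\sigma_{3}|\xi_{3}|^{2})=H$, hence
\[
\max_{1\leq j\leq 3}|\tau_{j}+\sigma_{j}|\xi_{j}|^{2}|\ \geq\ \tfrac{1}{3}|H|,
\]
so it suffices to prove $|H|\gtrsim \max_{j}|\xi_{j}|^{2}$ in each case. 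Eliminating $\xi_{3}=-\xi_{1}-\xi_{2}$ and expanding $|\xi_{3}|^{2}=|\xi_{1}|^{2}+2\xi_{1}\cdot\xi_{2}+|\xi_{2}|^{2}$, I rewrite
\[
H=(\sigma_{1}+\sigma_{3})|\xi_{1}|^{2}+2\sigma_{3}\,\xi_{1}\cdot\xi_{2}+(\sigma_{2}+\sigma_{3})|\xi_{2}|^{2}=\sum_{k=1}^{d}Q\!\left(\xi_{1}^{(k)},\xi_{2}^{(k)}\right),
\]
where $Q(a,b):=(\sigma_{1}+\sigma_{3})a^{2}+2\sigma_{3}ab+(\sigma_{2}+\sigma_{3})b^{2}$ is a binary quadratic form whose discriminant (the determinant of its matrix) equals $(\sigma_{1}+\sigma_{3})(\sigma_{2}+\sigma_{3})-\sigma_{3}^{2}=\sigma_{1}\sigma_{2}+\sigma_{2}\sigma_{3}+\sigma_{3}\sigma_{1}=\sigma_{1}\sigma_{2}\sigma_{3}(1/\sigma_{1}+1/\sigma_{2}+1/\sigma_{3})$.

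For part {\rm (ii)}: under the hypothesis this determinant is positive, so the symmetric matrix of $Q$ has two nonzero real eigenvalues of the same sign; hence $Q$ is sign-definite and $|Q(a,b)|\geq c_{0}(a^{2}+b^{2})$ for some $c_{0}>0$ depending only on $\sigma_{1},\sigma_{2},\sigma_{3}$. Summing over the $d$ components (all of the same sign) gives $|H|\geq c_{0}(|\xi_{1}|^{2}+|\xi_{2}|^{2})$, and since $|\xi_{3}|\leq|\xi_{1}|+|\xi_{2}|$ the right-hand side is $\gtrsim\max_{j}|\xi_{j}|^{2}$; this yields {\rm (\ref{modulation_est})}.

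For part {\rm (i)}: by the joint permutation symmetry of the hypotheses (one may relabel $(\sigma_{j},\tau_{j},\xi_{j})$ simultaneously), I may assume the smallest frequency is $\xi_{1}$ and that $|\xi_{2}|=\max_{j}|\xi_{j}|=:N$; then $|\xi_{1}|\ll N$, and $\xi_{2}+\xi_{3}=-\xi_{1}$ forces $|\xi_{3}|\sim N$. In the displayed formula for $H$, the term $(\sigma_{1}+\sigma_{3})|\xi_{1}|^{2}$ is $O(|\xi_{1}|^{2})$ and the cross term is $O(N|\xi_{1}|)$, while $|(\sigma_{2}+\sigma_{3})|\xi_{2}|^{2}|\sim N^{2}$ because $\sigma_{2}+\sigma_{3}\neq 0$ by the standing assumption $(\sigma_{1}+\sigma_{2})(\sigma_{2}+\sigma_{3})(\sigma_{3}+\sigma_{1})\neq 0$. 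Therefore $|H|\geq|\sigma_{2}+\sigma_{3}|\,N^{2}-C(N|\xi_{1}|+|\xi_{1}|^{2})\gtrsim N^{2}$ as soon as $|\xi_{1}|/N$ is below a constant determined by $\sigma_{1},\sigma_{2},\sigma_{3}$, which is exactly the meaning of $|\xi_{1}|\ll N$ here. The computations are elementary; the only thing requiring care is keeping track of which pairwise sum $\sigma_{i}+\sigma_{j}$ survives after each reduction and the observation that the discriminant governing {\rm (ii)} is precisely $\sigma_{1}\sigma_{2}\sigma_{3}(1/\sigma_{1}+1/\sigma_{2}+1/\sigma_{3})$ (which, after the conjugations in {\rm (\ref{NLS_sys})}, becomes the parameter $\theta$). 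I do not expect any genuine obstacle beyond ensuring the implied constants are uniform in $(\tau_{j},\xi_{j})$.
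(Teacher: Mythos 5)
Your proof is correct and follows essentially the same route as the paper: both reduce the claim to the lower bound $\bigl|\sigma_{1}|\xi_{1}|^{2}+\sigma_{2}|\xi_{2}|^{2}+\sigma_{3}|\xi_{3}|^{2}\bigr|\gtrsim \max_{j}|\xi_{j}|^{2}$ via the triangle inequality and then analyze the resulting quadratic form in $(\xi_{1},\xi_{2})$, whose discriminant is exactly $\sigma_{1}\sigma_{2}\sigma_{3}(1/\sigma_{1}+1/\sigma_{2}+1/\sigma_{3})$. The only cosmetic difference is that the paper exhibits this by completing the square (dividing by $\sigma_{1}+\sigma_{3}\neq 0$), whereas you invoke sign-definiteness of the form in (ii) and a direct dominant-term estimate in (i).
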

\begin{proof}
By the triangle inequality and the completing the square, we have
\begin{equation}\label{com_squ}
\begin{split}
M_{0}:=&\max_{1\leq j\leq 3}|\tau_{j}+\sigma_{j}|\xi_{j}|^{2}|\\
&\gtrsim |\sigma_{1}|\xi_{1}|^{2}+\sigma_{2}|\xi_{2}|^{2}+\sigma_{3}|\xi_{3}|^{2}|\\
&=|(\sigma_{1}+\sigma_{3})|\xi_{1}|^{2}+2\sigma_{3}\xi_{1}\cdot \xi_{2}+(\sigma_{2}+\sigma_{3})|\xi_{2}|^{2}|\\
&=|\sigma_{1}+\sigma_{3}|\left| \left| \xi_{1}+\frac{\sigma_{3}}{\sigma_{1}+\sigma_{3}}\xi_{2}\right|^{2}+\frac{\sigma_{1}\sigma_{2}\sigma_{3}}{(\sigma_{1}+\sigma_{3})^{2}}\left(\frac{1}{\sigma_{1}}+\frac{1}{\sigma_{2}}+\frac{1}{\sigma_{3}}\right)|\xi_{2}|^{2}\right| .
\end{split}
\end{equation}
We first prove {\rm (i)}. By the symmetry, we can assume $|\xi_{1}|\sim |\xi_{3}|\gtrsim |\xi_{2}|$. 
If $|\xi_{1}|\gg |\xi_{2}|$, then we have $M_{0}\gtrsim |\xi_{1}|^{2}\sim \max_{1\leq j\leq 3}|\xi_{j}|^{2}$ by (\ref{com_squ}). 
Next, we prove {\rm (ii)}. By the symmetry, we can assume $|\xi_{1}|\sim |\xi_{2}|\gtrsim |\xi_{3}|$. 
If $\sigma_{1}\sigma_{2}\sigma_{3}(1/\sigma_{1}+1/\sigma_{2}+1/\sigma_{3})>0$, 
then we have $M_{0}\gtrsim |\xi_{2}|^{2}\sim \max_{1\leq j\leq 3}|\xi_{j}|^{2}$ by (\ref{com_squ}). 
\end{proof}
In the following Propositions and Corollaries in this and next section, we assume $P_{N_{1}}u_{1}\in V^{2}_{-,rc,\sigma_{1}}$, $P_{N_{2}}u_{2}\in V^{2}_{-,rc,\sigma_{2}}$ and $P_{N_{3}}u_{3}\in V^{2}_{-,rc,\sigma_{3}}$ for each $N_{1}$, $N_{2}$, $N_{3}\in 2^{\Z}$. 
Propositions~\ref{HL_est}, ~\ref{HH_est} and its proofs are based on Proposition\ 3.1 in \cite{HHK09}.  
\begin{prop}\label{HL_est}
Let $d\geq 2$, $s_{c}=d/2-1$, $0<T\leq \infty$ and $\sigma_{1}$, $\sigma_{2}$, $\sigma_{3}\in \R \backslash \{ 0\}$ satisfy $(\sigma_{1}+\sigma_{2})(\sigma_{2}+\sigma_{3})(\sigma_{3}+\sigma_{1})\neq 0$. 
For any dyadic numbers $N_{2}$, $N_{3}\in 2^{\Z}$ with $N_{2}\sim N_{3}$, we have
\begin{equation}\label{hl}
\begin{split}
&\left|\sum_{N_{1}\ll N_{2}}N_{\max}\int_{0}^{T}\int_{\R^{d}}(P_{N_{1}}u_{1})(P_{N_{2}}u_{2})(P_{N_{3}}u_{3})dxdt\right|\\
&\lesssim 
\left(\displaystyle \sum_{N_{1}\ll N_{2}}N_{1}^{2s_{c}}||P_{N_{1}}u_{1}||_{V^{2}_{\sigma_{1}}}^{2}\right)^{1/2}||P_{N_{2}}u_{2}||_{V^{2}_{\sigma_{2}}}||P_{N_{3}}u_{3}||_{V^{2}_{\sigma_{3}}},
\end{split}
\end{equation}
where $\displaystyle N_{\max}:=\max_{1\leq j\leq 3}N_{j}$. 
\end{prop}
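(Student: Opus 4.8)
The plan is to decompose each factor into modulation pieces and exploit Lemma~\ref{modul_est}(i), which applies here because in the summand we have $N_1 \ll N_2$, forcing $\max_j |\tau_j + \sigma_j|\xi_j|^2| \gtrsim N_2^2 \sim N_{\max}^2$. First I would fix $N_1 \ll N_2 \sim N_3$ and write, for the prefactor, $N_{\max} \sim N_2$. For a single triple $(N_1,N_2,N_3)$, set $M := c N_2^2$ for a suitable small constant $c$, and split $P_{N_j}u_j = Q^{\sigma_j}_{<M} P_{N_j}u_j + Q^{\sigma_j}_{\ge M} P_{N_j}u_j$. On the support of the spacetime integral we have $\tau_1+\tau_2+\tau_3 = 0$ and $\xi_1+\xi_2+\xi_3=0$, so by Lemma~\ref{modul_est}(i) at least one factor must carry modulation $\gtrsim N_2^2 \ge M$; hence the term with all three factors at low modulation vanishes, and the integral is a sum of three terms, in each of which one designated factor is $Q^{\sigma_j}_{\ge M}$.

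Next I would estimate each of these three terms by putting the high-modulation factor in $L^2_{tx}$ via \eqref{highMproj}, namely $\|Q^{\sigma_j}_{\ge M} P_{N_j}u_j\|_{L^2_{tx}} \lesssim M^{-1/2}\|P_{N_j}u_j\|_{V^2_{\sigma_j}} \sim N_2^{-1}\|P_{N_j}u_j\|_{V^2_{\sigma_j}}$, and pairing the other two factors with the bilinear estimate \eqref{Vbilinear} from Corollary~\ref{UVbilinear}(i). The crucial point is that one of the remaining two factors is always the low-frequency factor $P_{N_1}u_1$ with $N_1 \ll N_2$, so \eqref{Vbilinear} gives a gain
\[
\|(P_{N_2}u_2)(P_{N_1}u_1)\|_{L^2_{tx}} \lesssim N_1^{s_c}\left(\frac{N_1}{N_2}\right)^{1/2}\left(1+\ln\frac{N_2}{N_1}\right)^2 \|P_{N_1}u_1\|_{V^2_{\sigma_1}}\|P_{N_2}u_2\|_{V^2_{\sigma_2}},
\]
and likewise with $N_2$ replaced by $N_3 \sim N_2$. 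Combining with the $N_2^{-1}$ from the modulation factor and the $N_2$ from the prefactor $N_{\max}$, the net power of $N_2$ is $N_2 \cdot N_2^{-1} \cdot N_2^{-1/2} = N_2^{-1/2}$, which together with $N_1^{s_c + 1/2}$ and the boundedness of $Q_{<M}$ on $V^2$ (estimate \eqref{Vproj}) yields, for each fixed triple,
\[
\text{(single-triple term)} \lesssim \left(\frac{N_1}{N_2}\right)^{1/2}\left(1+\ln\frac{N_2}{N_1}\right)^2 N_1^{s_c}\|P_{N_1}u_1\|_{V^2_{\sigma_1}}\|P_{N_2}u_2\|_{V^2_{\sigma_2}}\|P_{N_3}u_3\|_{V^2_{\sigma_3}}.
\]

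Finally I would sum over $N_1 \ll N_2$. Writing $a_{N_1} := N_1^{s_c}\|P_{N_1}u_1\|_{V^2_{\sigma_1}}$ and $r := N_1/N_2 \in 2^{\Z}$, $r \ll 1$, the weight $r^{1/2}(1+\ln r^{-1})^2$ is summable over dyadic $r \ll 1$, so by Cauchy–Schwarz in $N_1$ (or Schur's test, treating $\sum_{N_1} r^{1/2}(1+\ln r^{-1})^2 a_{N_1} \le (\sum r^{1/2}(1+\ln r^{-1})^2)^{1/2}(\sum r^{1/2}(1+\ln r^{-1})^2 a_{N_1}^2)^{1/2}$ and then noting each $N_1$ with $N_1\ll N_2$ is hit) one bounds $\sum_{N_1\ll N_2} r^{1/2}(1+\ln r^{-1})^2 a_{N_1}$ by $(\sum_{N_1\ll N_2} a_{N_1}^2)^{1/2}$ up to an absolute constant. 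This produces exactly the right-hand side of \eqref{hl}. The main obstacle is bookkeeping: one must carefully verify that in each of the three modulation sub-terms the \emph{low-frequency} factor $P_{N_1}u_1$ is one of the two factors fed into the bilinear estimate \eqref{Vbilinear} (it is, since that estimate needs one low and one high frequency, and $N_2\sim N_3$ are the only high ones), and that the logarithmic loss from \eqref{Vbilinear} is controlled by the dyadic gap $r^{1/2}$ — which it is, since $r^{1/2}(1+\ln r^{-1})^2$ remains summable. The interchange of the $N_1$-summation with the spacetime integral and the triangle inequality used to pull the sum inside the absolute value are routine given right-continuity in $V^2_{-,rc}$.
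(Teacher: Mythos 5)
Your modulation decomposition, the vanishing of the all-low-modulation term via Lemma~\ref{modul_est}(i), and your treatment of the cases where one of the \emph{high-frequency} factors $P_{N_2}u_2$ or $P_{N_3}u_3$ carries the high modulation all match the paper's argument: put that factor in $L^2_{tx}$ with the $M^{-1/2}\sim N_2^{-1}$ gain from \eqref{highMproj}, apply \eqref{Vbilinear} to the remaining high--low pair, and close the dyadic sum with Cauchy--Schwarz using the off-diagonal decay. But there is a genuine gap in the third case, $Q^{\sigma_1}_{\ge M}P_{N_1}u_1$, i.e.\ when the \emph{low-frequency} factor is the one at high modulation. Your parenthetical claim that ``one of the remaining two factors is always the low-frequency factor $P_{N_1}u_1$'' is false precisely here: once $P_{N_1}u_1$ has been designated as the high-modulation factor and placed in $L^2_{tx}$, the two factors left over for the bilinear estimate are $P_{N_2}u_2$ and $P_{N_3}u_3$ with $N_2\sim N_3$, and \eqref{Vbilinear} does not apply to a comparable-frequency pair (it requires $L\ll H$). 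Falling back on the crude $L^4_tL^{2d/(d-1)}_x$ product bound \eqref{U4_est} for that pair produces a factor $N_2^{s_c}$ in place of the needed $N_1^{s_c}$, which is a genuine loss for $d\ge 3$ and, in any dimension, provides no off-diagonal decay in $N_1/N_2$ with which to sum over $N_1$.

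The paper closes this case by a different mechanism: it keeps the whole sum $\sum_{N_1\ll N_2}$ inside the $L^2_{tx}$ norm and uses $L^2$-orthogonality of the frequency blocks together with \eqref{highMproj} to get the square sum $\bigl(\sum_{N_1}N_{\max}^2N_1^{2s_c}M^{-1}\|f_{1,N_1,T}\|_{V^2_{\sigma_1}}^2\bigr)^{1/2}$ directly, while the weight $N_1^{s_c}$ is generated by the Sobolev embedding $\dot H^{s_c}(\R^d)\hookrightarrow L^d(\R^d)$ applied to the low-frequency factor, and the two high-frequency factors are each placed in $L^4_tL^{2d/(d-1)}_x$ via the Strichartz embedding \eqref{V_Stri}. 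You would need to add an argument of this kind (or some substitute giving both the $N_1^{s_c}$ weight on the low-frequency factor and square-summability in $N_1$) for your proof to go through; as written, one of the three sub-cases is not handled. A secondary, fixable point: the truncation $\ee_{[0,T)}$ must be inserted before the modulation decomposition (as the paper does with $f_{j,N_j,T}$), using $\|\ee_{[0,T)}f\|_{V^2_\sigma}\lesssim\|f\|_{V^2_\sigma}$, so that the cancellation argument applies to the integral over $[0,T]$.
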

\begin{proof} 
We define $f_{j,N_{j},T}:=\ee_{[0,T)}P_{N_{j}}u_{j}$\ $(j=1,2,3)$. For sufficiently large constant C, we put 
$M:=C^{-1}N_{\max}^{2}$ and decompose $Id=Q^{\sigma_{j}}_{<M}+Q^{\sigma_{j}}_{\geq M}$\ $(j=1,2,3)$. 
We divide the integrals on the left-hand side of (\ref{hl}) into eight piece of the form 
\begin{equation}\label{piece_form}
\int_{\R}\int_{\R^{d}}(Q_{1}^{\sigma_{1}}f_{1,N_{1},T})(Q_{2}^{\sigma_{2}}f_{2,N_{2},T})(Q_{3}^{\sigma_{3}}f_{3,N_{3},T})dxdt
\end{equation}
with $Q_{j}^{\sigma_{j}}\in \{Q_{\geq M}^{\sigma_{j}}, Q_{<M}^{\sigma_{j}}\}$\ $(j=1,2,3)$. 
By the Plancherel's theorem, we have
\[
(\ref{piece_form})
= c\int_{\tau_{1}+\tau_{2}+\tau_{3}=0}\int_{\xi_{1}+\xi_{2}+\xi_{3}=0}\FT[Q_{1}^{\sigma_{1}}f_{1,N_{1},T}](\tau_{1},\xi_{1})\FT[Q_{2}^{\sigma_{2}}f_{2,N_{2},T}](\tau_{2}, \xi_{2})\FT[Q_{3}^{\sigma_{3}}f_{3,N_{3},T}](\tau_{3}, \xi_{3}),
\]
where $c$ is a constant. Therefore, Lemma~\ref{modul_est} {\rm (i)} implies that
\[
\int_{\R}\int_{\R^{d}}(Q_{<M}^{\sigma_{1}}f_{1,N_{1},T})(Q_{<M}^{\sigma_{2}}f_{2,N_{2},T})(Q_{<M}^{\sigma_{3}}f_{3,N_{3},T})dxdt=0
\]
when $N_{1}\ll N_{2}$. 
So, let us now consider the case that $Q_{j}^{\sigma_{j}}=Q_{\geq M}^{\sigma_{j}}$ for some $1\leq j\leq 3$. 

First, we consider the case $Q_{1}^{\sigma_{1}}=Q_{\geq M}^{\sigma_{1}}$. 
By the H\"older's inequality and the Sobolev embedding $\dot{H}^{s_{c}}(\R^{d})\hookrightarrow L^{d}(\R^{d})$, we have
\begin{equation}\label{HL_1_est}
\begin{split}
&\left|\sum_{N_{1}\ll N_{2}}N_{\max}\int_{\R}\int_{\R^{d}}(Q_{\geq M}^{\sigma_{1}}f_{1,N_{1},T})(Q_{2}^{\sigma_{2}}f_{2,N_{2},T})(Q_{3}^{\sigma_{3}}f_{3,N_{3},T})dxdt\right|\\
&\lesssim \left|\left|\sum_{N_{1}\ll N_{2}}N_{\max}|\nabla |^{s_{c}}Q_{\geq M}^{\sigma_{1}}f_{1,N_{1},T}\right|\right|_{L^{2}_{tx}}
||Q_{2}^{\sigma_{2}}f_{2,N_{2},T}||_{L_{t}^{4}L_{x}^{2d/(d-1)}}||Q_{3}^{\sigma_{3}}f_{3,N_{3},T}||_{L_{t}^{4}L_{x}^{2d/(d-1)}}. 
\end{split}
\end{equation}
Furthermore, by the $L^{2}$ orthogonality and (\ref{highMproj}) with $p=2$, we have
\[
\left|\left|\sum_{N_{1}\ll N_{2}}N_{\max}|\nabla |^{s_{c}}Q_{\geq M}^{\sigma_{1}}f_{1,N_{1},T}\right|\right|_{L^{2}_{tx}}
\lesssim \left(\sum_{N_{1}\ll N_{2}}N_{\max}^{2}N_{1}^{2s_{c}}M^{-1}||f_{1,N_{1},T}||_{V_{\sigma_{1}}^{2}}^{2}\right)^{1/2}
\]
While by (\ref{V_Stri}) and (\ref{Vproj}), we have
\[
||Q_{2}^{\sigma_{2}}f_{2,N_{2},T}||_{L_{t}^{4}L_{x}^{2d/(d-1)}}\lesssim ||f_{2,N_{2},T}||_{V_{\sigma_{2}}^{2}},\ 
||Q_{3}^{\sigma_{3}}f_{3,N_{3},T}||_{L_{t}^{4}L_{x}^{2d/(d-1)}}\lesssim ||f_{3,N_{3},T}||_{V_{\sigma_{3}}^{2}}.
\]
Therefore, we obtain 
\[
\begin{split}
&\left|\sum_{N_{1}\ll N_{2}}N_{\max}\int_{\R}\int_{\R^{d}}(Q_{\geq M}^{\sigma_{1}}f_{1,N_{1},T})(Q_{2}^{\sigma_{2}}f_{2,N_{2},T})(Q_{3}^{\sigma_{3}}f_{3,N_{3},T})dxdt\right|\\
&\lesssim 
\left(\displaystyle \sum_{N_{1}\ll N_{2}}N_{1}^{2s_{c}}||P_{N_{1}}u_{1}||_{V^{2}_{\sigma_{1}}}^{2}\right)^{1/2}||P_{N_{2}}u_{2}||_{V^{2}_{\sigma_{2}}}||P_{N_{3}}u_{3}||_{V^{2}_{\sigma_{3}}},
\end{split}
\]
since $M\sim N_{\max}^{2}$ and $||\ee_{[0,T)}f||_{V^{2}_{\sigma}}\lesssim ||f||_{V^{2}_{\sigma}}$ for any $\sigma \in \R$ and any $T\in (0,\infty]$. 

Next, we consider the case $Q_{3}^{\sigma_{3}}=Q_{\geq M}^{\sigma_{3}}$. 
By the Cauchy-Schwarz inequality, we have
\[
\begin{split}
&\left|\sum_{N_{1}\ll N_{2}}N_{\max}\int_{\R}\int_{\R^{d}}(Q_{1}^{\sigma_{1}}f_{1,N_{1},T})(Q_{2}^{\sigma_{2}}f_{2,N_{2},T})(Q_{\geq M}^{\sigma_{3}}f_{3,N_{3},T})dxdt\right|\\
&\leq \sum_{N_{1}\ll N_{2}}N_{\max}||(Q_{1}^{\sigma_{1}}f_{1,N_{1},T})(Q_{2}^{\sigma_{2}}f_{2,N_{2},T})||_{L^{2}_{tx}}||Q_{\geq M}^{\sigma_{3}}f_{3,N_{3},T}||_{L^{2}_{tx}}. 
\end{split}
\]
Furthermore, by (\ref{highMproj}) with $p=2$, we have
\begin{equation}\label{HL_3_est}
||Q_{\geq M}^{\sigma_{3}}f_{3,N_{3},T}||_{L^{2}_{tx}}
\lesssim M^{-1/2}||f_{3,N_{3},T}||_{V^{2}_{\sigma_{3}}}.
\end{equation}
While by (\ref{Vbilinear}), (\ref{Vproj})
and the Cauchy-Schwarz inequality for the dyadic sum, we have
\begin{equation}\label{HL_4_est}
\begin{split}
&\sum_{N_{1}\ll N_{2}}||(Q_{1}^{\sigma_{1}}f_{1,N_{1},T})(Q_{2}^{\sigma_{2}}f_{2,N_{2},T})||_{L^{2}_{tx}}\\
&\lesssim \sum_{N_{1}\ll N_{2}}N_{1}^{s_{c}}\left(\frac{N_{1}}{N_{2}}\right)^{1/4}
||Q_{1}^{\sigma_{1}}f_{1,N_{1},T}||_{V_{\sigma_{1}}^{2}}||Q_{2}^{\sigma_{2}}f_{2,N_{2},T}||_{V_{\sigma_{2}}^{2}}\\
&\lesssim \left(\sum_{N_{1}\ll N_{2}}N_{1}^{2s_{c}}||f_{1,N_{1},T}||_{V_{\sigma_{1}}^{2}}^{2}\right)^{1/2}
||f_{2,N_{2},T}||_{V_{\sigma_{2}}^{2}}.
\end{split}
\end{equation}
Therefore, we obtain 
\[
\begin{split}
&\left|\sum_{N_{1}\ll N_{2}}N_{\max}\int_{\R}\int_{\R^{d}}(Q_{1}^{\sigma_{1}}f_{1,N_{1},T})(Q_{2}^{\sigma_{2}}f_{2,N_{2},T})(Q_{\geq M}^{\sigma_{3}}f_{3,N_{3},T})dxdt\right|\\
&\lesssim 
\left(\displaystyle \sum_{N_{1}\ll N_{2}}N_{1}^{2s_{c}}||P_{N_{1}}u_{1}||_{V^{2}_{\sigma_{1}}}^{2}\right)^{1/2}||P_{N_{2}}u_{2}||_{V^{2}_{\sigma_{2}}}||P_{N_{3}}u_{3}||_{V^{2}_{\sigma_{3}}},
\end{split}
\]
since $M\sim N_{\max}^{2}$ and $||\ee_{[0,T)}f||_{V^{2}_{\sigma}}\lesssim ||f||_{V^{2}_{\sigma}}$ for any $\sigma \in \R$ and any $T\in (0,\infty]$. 

For the case $Q_{2}^{\sigma_{2}}=Q_{\geq M}^{\sigma_{2}}$ is proved in exactly same way as the case $Q_{3}^{\sigma_{3}}=Q_{\geq M}^{\sigma_{3}}$.
\end{proof}
\begin{prop}\label{HH_est}
Let $d\geq 2$, $s_{c}=d/2-1$, $s\geq 0$, $0<T\leq \infty$ and $\sigma_{1}$, $\sigma_{2}$, $\sigma_{3}\in \R \backslash \{ 0\}$ satisfy $(\sigma_{1}+\sigma_{2})(\sigma_{2}+\sigma_{3})(\sigma_{3}+\sigma_{1})\neq 0$. 
For any dyadic numbers $N_{1}$, $N_{2}\in 2^{\Z}$ with $N_{1}\sim N_{2}$, we have
\begin{equation}\label{hh}
\begin{split}
&\left(\sum_{N_{3}\ll N_{2}}N_{3}^{2s}
\sup_{||u_{3}||_{V^{2}_{\sigma_{3}}}=1}\left| N_{\max}\int_{0}^{T}\int_{\R^{d}}(P_{N_{1}}u_{1})(P_{N_{2}}u_{2})(P_{N_{3}}u_{3})dxdt\right|^{2}
\right)^{1/2}\\
&\lesssim  N_{1}^{s_{c}}||P_{N_{1}}u_{1}||_{V^{2}_{\sigma_{1}}}N_{2}^{s}||P_{N_{2}}u_{2}||_{V^{2}_{\sigma_{2}}}, 
\end{split}
\end{equation}
where $\displaystyle N_{\max}:=\max_{1\leq j\leq 3}N_{j}$. 
\end{prop}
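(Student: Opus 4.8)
\textbf{Proof proposal for Proposition~\ref{HH_est}.} The plan is to mirror the structure of the proof of Proposition~\ref{HL_est}, but to keep the $\ell^2$-summation over the low frequency $N_3$ until the very end rather than inside the modulation splitting. First I would fix $N_1\sim N_2$ and, for each $N_3\ll N_2$, set $M:=C^{-1}N_{\max}^2 = C^{-1}N_1^2$ (up to constants), put $f_{j,N_j,T}:=\ee_{[0,T)}P_{N_j}u_j$, and decompose $Id=Q^{\sigma_j}_{<M}+Q^{\sigma_j}_{\geq M}$ for $j=1,2,3$, splitting the trilinear integral into eight pieces as in \eqref{piece_form}. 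Since $N_3\ll N_2\sim N_1$, Lemma~\ref{modul_est}~(i) applies (with the roles $|\xi_3|\ll|\xi_1|$) and kills the all-low-modulation piece $Q^{\sigma_1}_{<M}Q^{\sigma_2}_{<M}Q^{\sigma_3}_{<M}$. So it remains to bound the pieces where $Q^{\sigma_j}_j=Q^{\sigma_j}_{\geq M}$ for at least one $j$, and by symmetry of $\sigma_1\leftrightarrow\sigma_2$ there are essentially two cases: the high modulation sits on one of the two high-frequency factors, or it sits on the low-frequency factor.

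For the case where $Q^{\sigma_3}_3=Q^{\sigma_3}_{\geq M}$ (high modulation on the low-frequency factor), I would use Cauchy--Schwarz to write the integral as $\lesssim N_{\max}\,\|(Q^{\sigma_1}_1 f_{1,N_1,T})(Q^{\sigma_2}_2 f_{2,N_2,T})\|_{L^2_{tx}}\,\|Q^{\sigma_3}_{\geq M}f_{3,N_3,T}\|_{L^2_{tx}}$; by \eqref{highMproj} with $p=2$ the last factor is $\lesssim M^{-1/2}\|f_{3,N_3,T}\|_{V^2_{\sigma_3}}\lesssim N_{\max}^{-1}\|P_{N_3}u_3\|_{V^2_{\sigma_3}}$, which cancels the $N_{\max}$ prefactor. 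Taking the supremum over $\|u_3\|_{V^2_{\sigma_3}}=1$ gives a bound $\lesssim N_3^{-s}$ times... no: rather the $N_3^{2s}$ weight and the sum over $N_3$ are handled by noting that $(P_{N_1}u_1)(P_{N_2}u_2)$ does not depend on $N_3$, so $\sum_{N_3\ll N_2}N_3^{2s}\big(\cdots\big)$ would diverge unless we exploit the $L^2$ bilinear estimate's gain; here I'd instead apply \eqref{Vbilinear} in the form where the low frequency in the product $(P_{N_1}u_1)(P_{N_3}u_3)$-type structure is not what we have — the two surviving factors are both at frequency $\sim N_1$. So the cleaner route: bound $\|(Q^{\sigma_1}_1 f_{1,N_1,T})(Q^{\sigma_2}_2 f_{2,N_2,T})\|_{L^2_{tx}}$ by Hölder and Strichartz, $\lesssim N_1^{s_c}\|P_{N_1}u_1\|_{V^2_{\sigma_1}}\|P_{N_2}u_2\|_{V^2_{\sigma_2}}$ (as in \eqref{U4_est} combined with \eqref{V_Stri}, \eqref{Vproj}), which is $N_3$-independent, and then the $N_3$ sum $\sum_{N_3\ll N_2}N_3^{2s}\cdot N_3^{0}$... still diverges. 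The fix is that the weight must be absorbed by the bilinear $L^2$ estimate between $P_{N_3}u_3$ and one high factor: in the $Q^{\sigma_1}_{\geq M}$ case I'd pair $P_{N_3}u_3$ with $P_{N_1}u_1$ using \eqref{Vbilinear}, getting a factor $N_3^{s_c}(N_3/N_1)^{1/2-}$, and the $N_3^{2s}$-weighted $\ell^2$ sum over $N_3\ll N_2$ then converges because $s_c+1/2 > -s$ trivially and the geometric factor $(N_3/N_1)^{\text{positive}}$ provides summability; the logarithmic loss $(1+\ln(N_1/N_3))^2$ is harmless against the power gain.

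Concretely, the three remaining pieces are: (a) $Q^{\sigma_1}_1=Q^{\sigma_1}_{\geq M}$ — pair $Q^{\sigma_1}_{\geq M}f_{1,N_1,T}$ with $P_{N_3}u_3$ via a bilinear $L^2$-type bound, here using $\|Q^{\sigma_1}_{\geq M}f_{1,N_1,T}\|_{L^2}\lesssim M^{-1/2}\|f_{1,N_1,T}\|_{V^2}$ from \eqref{highMproj} together with Hölder/Strichartz $\|P_{N_2}u_2\|_{L^4_tL^{2d/(d-1)}_x}\lesssim\|P_{N_2}u_2\|_{V^2_{\sigma_2}}$ and $\|P_{N_3}u_3\|_{L^4_tL^{2d/(d-1)}_x}\lesssim N_3^{s_c}\|P_{N_3}u_3\|_{V^2_{\sigma_3}}$, so after cancelling $N_{\max}$ against $M^{-1/2}\sim N_{\max}^{-1}$ we get $N_3^{s_c}\|P_{N_1}u_1\|_{V^2_{\sigma_1}}\|P_{N_2}u_2\|_{V^2_{\sigma_2}}\|P_{N_3}u_3\|_{V^2_{\sigma_3}}$, whence $\sup_{\|u_3\|=1}(\cdots)\lesssim N_3^{s_c}\|P_{N_1}u_1\|_{V^2_{\sigma_1}}\|P_{N_2}u_2\|_{V^2_{\sigma_2}}$ and $(\sum_{N_3\ll N_2}N_3^{2s}N_3^{2s_c}\|P_{N_1}u_1\|^2\|P_{N_2}u_2\|^2)^{1/2}\lesssim N_2^{s+s_c}\|P_{N_1}u_1\|_{V^2_{\sigma_1}}\|P_{N_2}u_2\|_{V^2_{\sigma_2}}$ since $s,s_c\geq 0$ make the geometric series $\sum_{N_3\ll N_2}(N_3/N_2)^{2(s+s_c)}$ converge — wait, this needs $s+s_c>0$; when $s=s_c=0$ (i.e.\ $d=2$, $s=0$) one instead keeps the gain $N_3^{1/2}$ from the bilinear estimate \eqref{Vbilinear} applied to the pair $(P_{N_1}u_1,P_{N_3}u_3)$, i.e.\ $N_3^{s_c}(N_3/N_1)^{1/2}$, restoring summability with a log loss; (b) $Q^{\sigma_2}_2=Q^{\sigma_2}_{\geq M}$ — identical by symmetry; (c) $Q^{\sigma_3}_3=Q^{\sigma_3}_{\geq M}$ — here $\|Q^{\sigma_3}_{\geq M}f_{3,N_3,T}\|_{L^2}\lesssim M^{-1/2}\|P_{N_3}u_3\|_{V^2_{\sigma_3}}\sim N_1^{-1}\|P_{N_3}u_3\|_{V^2_{\sigma_3}}$ cancels $N_{\max}$, and $\|(P_{N_1}u_1)(P_{N_2}u_2)\|_{L^2_{tx}}\lesssim N_1^{s_c}\|P_{N_1}u_1\|_{V^2_{\sigma_1}}\|P_{N_2}u_2\|_{V^2_{\sigma_2}}$ by \eqref{U4_est}, \eqref{V_Stri}, \eqref{Vproj}; but again the $N_3$-sum of $N_3^{2s}$ needs a gain, so one should instead use \eqref{Vbilinear} on $(P_{N_2}u_2,P_{N_3}u_3)$ to produce $N_3^{s_c}(N_3/N_2)^{1/2}$ before applying $M^{-1/2}$ to $Q^{\sigma_1}_{\geq M}$ or $Q^{\sigma_2}_{\geq M}$, which is why the genuinely delicate bookkeeping is choosing, in each of the eight pieces, which two factors to pair in the bilinear $L^2$ estimate so that (i) the $N_{\max}$ derivative weight is absorbed by an $M^{-1/2}$ from a high-modulation factor, and (ii) a strictly positive power of $N_3/N_2$ survives for the final dyadic $\ell^2$-summation. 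I expect that exact matching — and in particular handling the borderline $s=s_c=0$ case where only the $\tfrac12$-power gain of \eqref{Vbilinear} (with its $(1+\ln(N_2/N_3))^2$ loss) saves summability — to be the main obstacle; everything else is Hölder, Strichartz \eqref{V_Stri}, the projection bounds \eqref{highMproj}--\eqref{Vproj}, and Cauchy--Schwarz in the dyadic sums.
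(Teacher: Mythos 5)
Your overall skeleton is the paper's: the same modulation cutoff $M\sim N_{\max}^{2}$, the same eight-fold splitting as in \eqref{piece_form}, Lemma~\ref{modul_est}~(i) killing the all-low-modulation piece, and, for the pieces where the high modulation sits on one of the two high-frequency factors, your final prescription (put $Q^{\sigma_{1}}_{\geq M}f_{1,N_{1},T}$ alone in $L^{2}_{tx}$ to harvest $M^{-1/2}\sim N_{\max}^{-1}$ via \eqref{highMproj}, and pair the remaining high factor with the low factor $f_{3,N_{3},T}$ via \eqref{Vbilinear} to harvest $N_{3}^{s_{c}}$ times a positive power of $N_{3}/N_{2}$ for the $\ell^{2}_{N_{3}}$-summation) is exactly what the paper does. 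Two of your intermediate steps for this case are off, though: the inequality ``$\|P_{N_{3}}u_{3}\|_{L^{4}_{t}L^{2d/(d-1)}_{x}}\lesssim N_{3}^{s_{c}}\|P_{N_{3}}u_{3}\|_{V^{2}_{\sigma_{3}}}$'' is not correct as written (and yields no power of $N_{3}/N_{2}$ anyway), and your fallback of pairing $(P_{N_{1}}u_{1},P_{N_{3}}u_{3})$ bilinearly conflicts with needing $Q^{\sigma_{1}}_{\geq M}f_{1,N_{1},T}$ isolated in $L^{2}_{tx}$ for the $M^{-1/2}$ gain; the pairing that works is $(f_{2},f_{3})$, which your closing sentence gestures at without committing to.

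The genuine gap is the piece $Q^{\sigma_{1}}_{<M}Q^{\sigma_{2}}_{<M}Q^{\sigma_{3}}_{\geq M}$, where the \emph{only} high modulation sits on the low-frequency factor. Your recipe for this case --- use \eqref{Vbilinear} on $(P_{N_{2}}u_{2},P_{N_{3}}u_{3})$ and then apply $M^{-1/2}$ to $Q^{\sigma_{1}}_{\geq M}$ or $Q^{\sigma_{2}}_{\geq M}$ --- cannot be executed there: no factor other than $f_{3}$ carries high modulation, and you cannot simultaneously place $Q^{\sigma_{3}}_{\geq M}f_{3,N_{3},T}$ alone in $L^{2}_{tx}$ (to get $M^{-1/2}$) and inside a bilinear estimate with $f_{2}$ (to get a power of $N_{3}/N_{2}$). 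With the tools you list, the best available bound for this piece is $N_{\max}M^{-1/2}\|(Q_{1}^{\sigma_{1}}f_{1,N_{1},T})(Q_{2}^{\sigma_{2}}f_{2,N_{2},T})\|_{L^{2}_{tx}}\lesssim N_{1}^{s_{c}}\|f_{1,N_{1},T}\|_{V^{2}_{\sigma_{1}}}\|f_{2,N_{2},T}\|_{V^{2}_{\sigma_{2}}}$, uniformly in $N_{3}$, after which $\sum_{N_{3}\ll N_{2}}N_{3}^{2s}$ diverges at $s=0$ --- precisely the endpoint the proposition must cover (it is used with $s=s_{c}=0$ for $d=2$ at critical regularity). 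The missing ingredient is almost-orthogonality in the output frequency: since $P_{N_{3}}=\widetilde{P}_{N_{3}}P_{N_{3}}$ with $\widetilde{P}_{N_{3}}=P_{N_{3}/2}+P_{N_{3}}+P_{2N_{3}}$, one bounds each piece by $\|\widetilde{P}_{N_{3}}\big((Q_{1}^{\sigma_{1}}f_{1,N_{1},T})(Q_{2}^{\sigma_{2}}f_{2,N_{2},T})\big)\|_{L^{2}_{tx}}\,\|Q^{\sigma_{3}}_{\geq M}f_{3,N_{3},T}\|_{L^{2}_{tx}}$, uses $\sum_{N_{3}}\|\widetilde{P}_{N_{3}}F\|_{L^{2}_{tx}}^{2}\lesssim\|F\|_{L^{2}_{tx}}^{2}$ together with $N_{3}^{2s}\leq N_{2}^{2s}$, and controls $\|F\|_{L^{2}_{tx}}$ by \eqref{U4_est} and $V^{2}_{-,rc}\hookrightarrow U^{4}$; square-summability in $N_{3}$ then comes from orthogonality rather than from a geometric series. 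Without this (or an equivalent high-high-to-low bilinear estimate for $\|P_{N_{3}}(P_{N_{1}}u_{1}\,P_{N_{2}}u_{2})\|_{L^{2}_{tx}}$, which is not among the estimates you invoke), your argument does not close at $s=0$.
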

\begin{proof} 
We define 
$f_{j,N_{j},T}:=\ee_{[0,T)}P_{N_{j}}u_{j}$\ $(j=1,2,3)$. For sufficiently large constant C, we put 
$M:=C^{-1}N_{\max}^{2}$ and decompose $Id=Q^{\sigma_{j}}_{<M}+Q^{\sigma_{j}}_{\geq M}$\ $(j=1,2,3)$. 
We divide the integrals on the left-hand side of (\ref{hh}) into eight piece of the form 
\begin{equation}\label{piece_form_hh}
\int_{\R}\int_{\R^{d}}(Q_{1}^{\sigma_{1}}f_{1,N_{1},T})(Q_{2}^{\sigma_{2}}f_{2,N_{2},T})(Q_{3}^{\sigma_{3}}f_{3,N_{3},T})dxdt
\end{equation}
with $Q_{j}^{\sigma_{j}}\in \{Q_{\geq M}^{\sigma_{j}}, Q_{<M}^{\sigma_{j}}\}$\ $(j=1,2,3)$. 
By the same argument of the proof of Proposition~\ref{HL_est}, we consider only the case that $Q_{j}^{\sigma_{j}}=Q_{\geq M}^{\sigma_{j}}$ for some $1\leq j\leq 3$. 

First, we consider the case $Q_{1}^{\sigma_{1}}=Q_{\geq M}^{\sigma_{1}}$. 
By the Cauchy-Schwarz inequality, we have
\[
\begin{split}
&\left|\int_{\R}\int_{\R^{d}}(Q_{\geq M}^{\sigma_{1}}f_{1,N_{1},T})(Q_{2}^{\sigma_{2}}f_{2,N_{2},T})(Q_{3}^{\sigma_{3}}f_{3,N_{3},T})dxdt\right|\\
&\leq ||Q_{\geq M}^{\sigma_{1}}f_{1,N_{1},T}||_{L^{2}_{tx}}||(Q_{2}^{\sigma_{2}}f_{2,N_{2},T})(Q_{3}^{\sigma_{3}}f_{3,N_{3},T})||_{L^{2}_{tx}}. 
\end{split}
\]
Furthermore by (\ref{highMproj}) with $p=2$, we have
\begin{equation}\label{HH_1_est}
||Q_{\geq M}^{\sigma_{1}}f_{1,N_{1},T}||_{L^{2}_{tx}}
\lesssim M^{-1/2}||f_{1,N_{1},T}||_{V^{2}_{\sigma_{1}}}.
\end{equation}
While by (\ref{Vbilinear}) and (\ref{Vproj}), we have
\begin{equation}\label{HH_2_est}
\begin{split}
||(Q_{2}^{\sigma_{2}}f_{2,N_{2},T})(Q_{3}^{\sigma_{3}}f_{3,N_{3},T})||_{L^{2}_{tx}}
&\lesssim N_{3}^{s_{c}}\left(\frac{N_{3}}{N_{2}}\right)^{1/4}
||f_{2,N_{2},T}||_{V_{\sigma_{2}}^{2}}||f_{3,N_{3},T}||_{V_{\sigma_{3}}^{2}}
\end{split}
\end{equation}
when $N_{3}\ll N_{2}$. Therefore, we obtain
\[
\begin{split}
&\sum_{N_{3}\ll N_{2}}N_{3}^{2s}\sup_{||u_{3}||_{V^{2}_{\sigma_{3}}}=1}\left|N_{\max}\int_{\R}\int_{\R^{d}}(Q_{\geq M}^{\sigma_{1}}f_{1,N_{1},T})(Q_{2}^{\sigma_{2}}f_{2,N_{2},T})(Q_{3}^{\sigma_{3}}f_{3,N_{3},T})dxdt\right|^{2}\\
&\lesssim N_{1}^{2s_{c}}||P_{N_{1}}u_{1}||_{V^{2}_{\sigma_{1}}}^{2}N_{2}^{2s}||P_{N_{2}}u_{2}||_{V^{2}_{\sigma_{2}}}^{2}
\end{split}
\]
by $M\sim N_{\max}^{2}$, $N_{1}\sim N_{2}$  and $||\ee_{[0,T)}f||_{V^{2}_{\sigma}}\lesssim ||f||_{V^{2}_{\sigma}}$ for any $\sigma \in \R$ and any $T\in (0,\infty]$. 

Next, we consider the case $Q_{3}^{\sigma_{3}}=Q_{\geq M}^{\sigma_{3}}$. 
We define $\widetilde{P}_{N_{3}}=P_{N_{3}/2}+P_{N_{3}}+P_{2N_{3}}$. 
By the Cauchy-Schwarz inequality, we have
\[
\begin{split}
&\left|\int_{\R}\int_{\R^{d}}(Q_{1}^{\sigma_{1}}f_{1,N_{1},T})(Q_{2}^{\sigma_{2}}f_{2,N_{2},T})(Q_{\geq M}^{\sigma_{3}}f_{3,N_{3},T})dxdt\right|\\
&\lesssim ||\widetilde{P}_{N_{3}}( (Q_{1}^{\sigma_{1}}f_{1,N_{1},T})(Q_{2}^{\sigma_{2}}f_{2,N_{2},T}))||_{L^{2}_{tx}}||Q_{\geq M}^{\sigma_{3}}f_{3,N_{3},T}||_{L^{2}_{tx}}
\end{split}
\]
since $P_{N_{3}}=\widetilde{P}_{N_{3}}P_{N_{3}}$. Furthermore, by (\ref{highMproj}) with $p=2$, we have
\begin{equation}\label{HH_3_est}
||Q_{\geq M}^{\sigma_{3}}f_{3,N_{3},T}||_{L^{2}_{tx}}
\lesssim M^{-1/2}||f_{3,N_{3},T}||_{V^{2}_{\sigma_{3}}}.
\end{equation}
Therefore, we obtain
\begin{equation}\label{HH_4_est}
\begin{split}
&\sum_{N_{3}\ll N_{2}}N_{3}^{2s}\sup_{||u_{3}||_{V^{2}_{\sigma_{3}}}=1}\left|N_{\max}\int_{\R}\int_{\R^{d}}(Q_{1}^{\sigma_{1}}f_{1,N_{1},T})(Q_{2}^{\sigma_{2}}f_{2,N_{2},T})(Q_{\geq M}^{\sigma_{3}}f_{3,N_{3},T})dxdt\right|^{2}\\
&\lesssim \sum_{N_{3}\ll N_{2}}N_{3}^{2s}N_{\max}^{2}M^{-1}||\widetilde{P}_{N_{3}}( (Q_{1}^{\sigma_{1}}f_{1,N_{1},T})(Q_{2}^{\sigma_{2}}f_{2,N_{2},T}))||_{L^{2}_{tx}}^{2}\\
&\lesssim N_{2}^{2s}||(Q_{1}^{\sigma_{1}}f_{1,N_{1},T})(Q_{2}^{\sigma_{2}}f_{2,N_{2},T})||_{L^{2}_{tx}}^{2}\\
&\lesssim N_{1}^{2s_{c}}||P_{N_{1}}u_{1}||_{V^{2}_{\sigma_{1}}}^{2}N_{2}^{2s}||P_{N_{2}}u_{2}||_{V^{2}_{\sigma_{2}}}^{2}
\end{split}
\end{equation}
by $M\sim N_{\max}^{2}$, $N_{1}\sim N_{2}$, $L^{2}$-orthogonality, (\ref{U4_est}), the embedding $V^{2}_{-,rc}\hookrightarrow U^{4}$, (\ref{Vproj})
and $||\ee_{[0,T)}f||_{V^{2}_{\sigma}}\lesssim ||f||_{V^{2}_{\sigma}}$ for any $\sigma \in \R$ and any $T\in (0,\infty]$. 

For the case $Q_{2}^{\sigma_{2}}=Q_{\geq M}^{\sigma_{2}}$ is proved in exactly same way as the case $Q_{1}^{\sigma_{1}}=Q_{\geq M}^{\sigma_{1}}$. 
\end{proof}
\begin{prop}\label{HHH_est}
Let $s_{c}=d/2-1$ and , $0<T\leq \infty$. \\
{\rm (i)} Let $d\geq 4$. 
For any $\sigma_{1}$, $\sigma_{2}$, $\sigma_{3} \in \R\backslash \{0\}$ and any dyadic numbers $N_{1}$, $N_{2}$, $N_{3}\in 2^{\Z}$ , we have
\begin{equation}\label{hhh}
\begin{split}
&\left|N_{\max}\int_{0}^{T}\int_{\R^{d}}(P_{N_{1}}u_{1})(P_{N_{2}}u_{2})(P_{N_{3}}u_{3})dxdt\right|\\
&\lesssim 
N_{\rm max}^{s_{c}}||P_{N_{1}}u_{1}||_{V^{2}_{\sigma_{1}}}||P_{N_{2}}u_{2}||_{V^{2}_{\sigma_{2}}}||P_{N_{3}}u_{3}||_{V^{2}_{\sigma_{3}}},
\end{split}
\end{equation}
where $\displaystyle N_{\max}:=\max_{1\leq j\leq 3}N_{j}.$\\
{\rm (ii)} Let $d=2$, $3$ and $\sigma_{1}\sigma_{2}\sigma_{3}(1/\sigma_{1}+1/\sigma_{2}+1/\sigma_{3})>0$.
For any dyadic numbers $N_{1}$, $N_{2}$, $N_{3}\in 2^{\Z}$, we have {\rm (\ref{hhh})}. 
\end{prop}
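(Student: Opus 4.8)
The argument naturally splits into two regimes. When $d\ge4$ the problem is subcritical enough that the claim follows from Strichartz and Bernstein estimates alone, with no condition on $\sigma_1,\sigma_2,\sigma_3$; this matches part~(i). When $d=2,3$ the Strichartz estimates leave no room, and one must instead exploit the resonance estimate of Lemma~\ref{modul_est}~(ii), which is available precisely under the sign hypothesis $\sigma_1\sigma_2\sigma_3(1/\sigma_1+1/\sigma_2+1/\sigma_3)>0$; this is why part~(ii) needs that condition.

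\emph{The case $d\ge4$.} I would estimate the left-hand side of (\ref{hhh}) by placing each factor in $L^3_tL^3_x$. Set $q:=6d/(3d-4)$; then $2<q\le3$ (with $q=3$ iff $d=4$) and $(3,q)$ is an admissible Schr\"odinger pair, since $2/3=d(1/2-1/q)$. Hence, by Bernstein's inequality in $x$ (valid because $q\le3$) and Corollary~\ref{UV_Stri} (the $V^2$ version),
\[
\|P_{N_j}u_j\|_{L^3_tL^3_x}\lesssim N_j^{(d-4)/6}\,\|P_{N_j}u_j\|_{L^3_tL^q_x}\lesssim N_j^{(d-4)/6}\,\|P_{N_j}u_j\|_{V^2_{\sigma_j}}\qquad(j=1,2,3).
\]
Applying H\"older's inequality in $x$ and then in $t$ (with exponents $3,3,3$ each time) to the triple integral, the left side of (\ref{hhh}) is bounded by
\[
N_{\max}\prod_{j=1}^{3}\|P_{N_j}u_j\|_{L^3_tL^3_x}\lesssim N_{\max}\Big(\prod_{j=1}^{3}N_j^{(d-4)/6}\Big)\prod_{j=1}^{3}\|P_{N_j}u_j\|_{V^2_{\sigma_j}}.
\]
Since $(d-4)/6\ge0$ for $d\ge4$ and $N_j\le N_{\max}$, we have $\prod_j N_j^{(d-4)/6}\le N_{\max}^{(d-4)/2}=N_{\max}^{s_c-1}$, and the two powers of $N_{\max}$ combine to $N_{\max}^{s_c}$, giving (\ref{hhh}). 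No comparability of the $N_j$ is needed (the fully lopsided configurations are vacuous because $\xi_1+\xi_2+\xi_3=0$), and no hypothesis on the $\sigma_j$ enters.

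\emph{The case $d=2,3$.} Here I would follow the scheme of the proofs of Propositions~\ref{HL_est} and~\ref{HH_est}, which is in fact simpler now because there is no dyadic sum to control. Put $f_{j,N_j,T}:=\ee_{[0,T)}P_{N_j}u_j$ and $M:=C^{-1}N_{\max}^2$ with $C$ large, decompose $Id=Q^{\sigma_j}_{<M}+Q^{\sigma_j}_{\ge M}$ for $j=1,2,3$, and split the integral in (\ref{hhh}) into eight pieces of the form $\int_{\R}\int_{\R^d}(Q^{\sigma_1}_{\bullet}f_{1,N_1,T})(Q^{\sigma_2}_{\bullet}f_{2,N_2,T})(Q^{\sigma_3}_{\bullet}f_{3,N_3,T})\,dx\,dt$. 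On the piece with all three factors of type $Q^{\sigma_j}_{<M}$, Plancherel's theorem and Lemma~\ref{modul_est}~(ii) show the integrand vanishes: on the resonant set $\tau_1+\tau_2+\tau_3=0$, $\xi_1+\xi_2+\xi_3=0$ one has $\max_j|\tau_j+\sigma_j|\xi_j|^2|\gtrsim\max_j|\xi_j|^2\sim N_{\max}^2$, which contradicts all three modulations being $\lesssim M=C^{-1}N_{\max}^2$ once $C$ is large. For each of the remaining pieces there is an index $j_0$ carrying $Q^{\sigma_{j_0}}_{\ge M}$; writing $\{j_1,j_2\}=\{1,2,3\}\setminus\{j_0\}$ and using the Cauchy--Schwarz inequality, that piece is bounded by
\[
\|Q^{\sigma_{j_0}}_{\ge M}f_{j_0,N_{j_0},T}\|_{L^2_{tx}}\;\big\|(Q^{\sigma_{j_1}}_{\bullet}f_{j_1,N_{j_1},T})(Q^{\sigma_{j_2}}_{\bullet}f_{j_2,N_{j_2},T})\big\|_{L^2_{tx}}.
\]
By (\ref{highMproj}) the first factor is $\lesssim M^{-1/2}\|f_{j_0,N_{j_0},T}\|_{V^2_{\sigma_{j_0}}}\sim N_{\max}^{-1}\|P_{N_{j_0}}u_{j_0}\|_{V^2_{\sigma_{j_0}}}$; by (\ref{U4_est}), the embedding $V^2_{-,rc}\hookrightarrow U^4$ (Proposition~\ref{upvpprop}~(v)) and (\ref{Vproj}) — exactly as in (\ref{HH_4_est}) — the second factor is $\lesssim N_{\max}^{s_c}\|P_{N_{j_1}}u_{j_1}\|_{V^2_{\sigma_{j_1}}}\|P_{N_{j_2}}u_{j_2}\|_{V^2_{\sigma_{j_2}}}$, where I also used $\|\ee_{[0,T)}f\|_{V^2_\sigma}\lesssim\|f\|_{V^2_\sigma}$. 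Multiplying these together with the prefactor $N_{\max}$ and recalling $M\sim N_{\max}^2$ yields $N_{\max}^{s_c}\prod_j\|P_{N_j}u_j\|_{V^2_{\sigma_j}}$; summing the finitely many pieces proves (\ref{hhh}) for $d=2,3$.

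\emph{The main obstacle.} The only delicate point is the case $d=2,3$: there is no slack in the Strichartz estimates, so one genuinely needs the $M^{-1/2}$ gain from a high modulation, and this forces one to kill the triple-low-modulation term, which can only be done via Lemma~\ref{modul_est}~(ii), hence under $\sigma_1\sigma_2\sigma_3(1/\sigma_1+1/\sigma_2+1/\sigma_3)>0$. For $d\ge4$ everything is routine once one tracks the Bernstein exponents carefully enough to land exactly on the power $N_{\max}^{s_c}$.
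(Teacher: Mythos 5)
Your proof is correct and follows essentially the same route as the paper: for $d\ge 4$ the paper also uses the admissible pair $(3,6d/(3d-4))$ and absorbs the loss $N_{\max}^{s_c-1}$ via the Sobolev embedding $\dot W^{s_c-1,6d/(3d-4)}\hookrightarrow L^{3d/4}$ applied to a single factor, which is just a redistribution of your symmetric Bernstein gains; for $d=2,3$ your modulation decomposition, the vanishing of the triple-low piece via Lemma~\ref{modul_est}~(ii), and the Cauchy--Schwarz plus (\ref{highMproj})/(\ref{U4_est}) treatment of the high-modulation pieces coincide with the paper's argument.
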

\begin{proof}First, we consider the case $d\geq 4$. 
By the H\"older's inequality, the Sobolev embedding $\dot{W}^{s_{c}-1, 6d/(3d-4)}(\R^{d})\hookrightarrow  L^{3d/4}(\R^{d})$ and
(\ref{V_Stri}), we have
\[
\begin{split}
({\rm L.H.S\ of\ (\ref{hhh})})
&\lesssim  N_{\max}||P_{N_{1}u_{1}}||_{L^{3}_{t}L^{6d/(3d-4)}_{x}}||P_{N_{2}u_{2}}||_{L^{3}_{t}L^{6d/(3d-4)}_{x}}|||\nabla |^{s_{c}-1} P_{N_{3}u_{3}}||_{_{L^{3}_{t}L^{6d/(3d-4)}_{x}}}\\
&\lesssim 
N_{\max}^{s_{c}}||P_{N_{1}u_{1}}||_{V^{2}_{\sigma_{1}}}||P_{N_{2}u_{2}}||_{V^{2}_{\sigma_{2}}}||P_{N_{3}u_{3}}||_{V^{2}_{\sigma_{3}}}.
\end{split}
\]
\kuuhaku 

Next, we consider the case $d=2$, $3$ and $\sigma_{1}\sigma_{2}\sigma_{3}(1/\sigma_{1}+1/\sigma_{2}+1/\sigma_{3})>0$. We define 
$f_{j,N_{j},T}:=\ee_{[0,T)}P_{N_{j}}u_{j}$\ $(j=1,2,3)$. For sufficiently large constant C, we put 
$M:=C^{-1}N_{\max}^{2}$ and decompose $Id=Q^{\sigma_{j}}_{<M}+Q^{\sigma_{j}}_{\geq M}$\ $(j=1,2,3)$. 
We divide the integral on the left-hand side of (\ref{hhh}) into eight piece of the form 
\begin{equation}\label{piece_form_2}
\int_{\R}\int_{\R^{d}}(Q_{1}^{\sigma_{1}}f_{1,N_{1},T})(Q_{2}^{\sigma_{2}}f_{2,N_{2},T})(Q_{3}^{\sigma_{3}}f_{3,N_{3},T})dxdt
\end{equation}
with $Q_{j}^{\sigma_{j}}\in \{Q_{\geq M}^{\sigma_{j}}, Q_{<M}^{\sigma_{j}}\}$\ $(j=1,2,3)$. 
Since $\sigma_{1}\sigma_{2}\sigma_{3}(1/\sigma_{1}+1/\sigma_{2}+1/\sigma_{3})>0$, Lemma~\ref{modul_est} {\rm (ii)} implies that
\[
\int_{\R}\int_{\R^{d}}(Q_{<M}^{\sigma_{1}}f_{1,N_{1},T})(Q_{<M}^{\sigma_{2}}f_{2,N_{2},T})(Q_{<M}^{\sigma_{3}}f_{3,N_{3},T})dxdt=0
\]
for any $N_{1}$, $N_{2}$, $N_{3}\in 2^{\Z}$. 
So, let us now consider the case that $Q_{j}^{\sigma_{j}}=Q_{\geq M}^{\sigma_{j}}$ for some $1\leq j\leq 3$. 
We consider only for the case $Q_{1}^{\sigma_{1}}=Q_{\geq M}^{\sigma_{1}}$ since 
for the other cases is same manner.

By the Cauchy-Schwarz inequality, we have
\[
\begin{split}
&\left|\int_{\R}\int_{\R^{d}}(Q_{\geq M}^{\sigma_{1}}f_{1,N_{1},T})(Q_{2}^{\sigma_{2}}f_{2,N_{2},T})(Q_{3}^{\sigma_{3}}f_{3,N_{3},T})dxdt\right|\\
&\leq ||Q_{\geq M}^{\sigma_{1}}f_{1,N_{1},T}||_{L^{2}_{tx}}||(Q_{2}^{\sigma_{2}}f_{2,N_{2},T})(Q_{3}^{\sigma_{3}}f_{3,N_{3},T})||_{L^{2}_{tx}}.
\end{split}
\]
Furthermore by (\ref{highMproj}) with $p=2$, we have
\begin{equation}\label{HHH_1_est}
\begin{split}
||Q_{\geq M}^{\sigma_{1}}f_{1,N_{1},T}||_{L^{2}_{tx}}
&\lesssim M^{-1/2}||f_{1,N_{1},T}||_{V_{\sigma_{1}}^{2}}. 
\end{split}
\end{equation}
While by (\ref{U4_est}), the embedding $V^{2}_{-,rc}\hookrightarrow U^{4}$ and (\ref{Vproj}), we have
\begin{equation}\label{HHH_2_est}
||(Q_{2}^{\sigma_{2}}f_{2,N_{2},T})(Q_{3}^{\sigma_{3}}f_{3,N_{3},T})||_{L^{2}_{tx}}
\lesssim N_{\rm max}^{s_{c}}
||f_{2,N_{2},T}||_{V_{\sigma_{2}}^{2}}||f_{3,N_{3},T}||_{V_{\sigma_{3}}^{2}}.
\end{equation}
Therefore, we obtain 
\[
\begin{split}
&\left|N_{\max}\int_{\R}\int_{\R^{d}}(Q_{\geq M}^{\sigma_{1}}f_{1,N_{1},T})(Q_{2}^{\sigma_{2}}f_{2,N_{2},T})(Q_{3}^{\sigma_{3}}f_{3,N_{3},T})dxdt\right|\\
&\lesssim 
N_{\rm max}^{s_{c}}||P_{N_{1}}u_{1}||_{V^{2}_{\sigma_{1}}}||P_{N_{2}}u_{2}||_{V^{2}_{\sigma_{2}}}||P_{N_{3}}u_{3}||_{V^{2}_{\sigma_{3}}},
\end{split}
\]
since $M\sim N_{\max}^{2}$ and $||\ee_{[0,T)}f||_{V^{2}_{\sigma}}\lesssim ||f||_{V^{2}_{\sigma}}$ for any $\sigma \in \R$ and any $T\in (0,\infty]$. 
\end{proof}
Proposition~\ref{HH_est} and Proposition~\ref{HHH_est} imply the following: 
\begin{cor}\label{HH-HL_est}
Let $\sigma_{1}$, $\sigma_{2}$, $\sigma_{3} \in \R\backslash \{0\}$ satisfy $(\sigma_{1}+\sigma_{2})(\sigma_{2}+\sigma_{3})(\sigma_{3}+\sigma_{1})\neq 0$ if $d\geq 4$, and $\sigma_{1}\sigma_{2}\sigma_{3}(1/\sigma_{1}+1/\sigma_{2}+1/\sigma_{3})>0$ if $d=2$, $3$. 
Then the estimate {\rm (\ref{hh})} holds if we replace $\sum_{N_{3}\ll N_{2}}$ by $\sum_{N_{3}\lesssim N_{2}}$. 
\end{cor}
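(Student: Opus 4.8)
The plan is to split the dyadic sum as $\sum_{N_3\lesssim N_2}=\sum_{N_3\ll N_2}+\sum_{N_3\sim N_2}$ and to estimate the two pieces separately, using the triangle inequality in the outer $\ell^2$ sum over $N_3$ that appears in (\ref{hh}). The first piece is precisely Proposition~\ref{HH_est} and requires no further work; one should only observe that the hypotheses of the Corollary contain those of Proposition~\ref{HH_est}, since for $d=2,3$ the sign condition $\sigma_1\sigma_2\sigma_3(1/\sigma_1+1/\sigma_2+1/\sigma_3)>0$ already forces $(\sigma_1+\sigma_2)(\sigma_2+\sigma_3)(\sigma_3+\sigma_1)\neq0$ (e.g. if $\sigma_1+\sigma_2=0$ then that quantity equals $-\sigma_1^2<0$). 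So it remains to treat the near-diagonal contribution $N_3\sim N_2$.

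Under the standing assumption $N_1\sim N_2$, the constraint $N_3\sim N_2$ leaves only $O(1)$ admissible dyadic values of $N_3$, each with $N_1\sim N_2\sim N_3\sim N_{\max}$. For any such fixed $N_3$, Proposition~\ref{HHH_est} applies — for arbitrary nonzero $\sigma_j$ when $d\geq4$, and, when $d=2,3$, precisely under the sign condition assumed in the Corollary — yielding
\[
\left|N_{\max}\int_0^T\int_{\R^d}(P_{N_1}u_1)(P_{N_2}u_2)(P_{N_3}u_3)\,dxdt\right|\lesssim N_{\max}^{s_c}||P_{N_1}u_1||_{V^2_{\sigma_1}}||P_{N_2}u_2||_{V^2_{\sigma_2}}||P_{N_3}u_3||_{V^2_{\sigma_3}}.
\]
Bounding $||P_{N_3}u_3||_{V^2_{\sigma_3}}\lesssim||u_3||_{V^2_{\sigma_3}}$, taking the supremum over $||u_3||_{V^2_{\sigma_3}}=1$, multiplying by $N_3^s$, and using $s\geq0$ together with $N_1\sim N_2\sim N_3\sim N_{\max}$ to recombine $N_3^sN_{\max}^{s_c}\sim N_1^{s_c}N_2^s$, one obtains, for each fixed $N_3\sim N_2$,
\[
N_3^s\sup_{||u_3||_{V^2_{\sigma_3}}=1}\left|N_{\max}\int_0^T\int_{\R^d}(P_{N_1}u_1)(P_{N_2}u_2)(P_{N_3}u_3)\,dxdt\right|\lesssim N_1^{s_c}||P_{N_1}u_1||_{V^2_{\sigma_1}}N_2^s||P_{N_2}u_2||_{V^2_{\sigma_2}}.
\]
Since the $\ell^2$ sum over $N_3\sim N_2$ has only boundedly many terms, the same bound holds for the $\ell^2$ norm of this near-diagonal piece, and adding it to the bound furnished by Proposition~\ref{HH_est} gives (\ref{hh}) with $\sum_{N_3\ll N_2}$ replaced by $\sum_{N_3\lesssim N_2}$.

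I expect no genuine obstacle here: the statement is a bookkeeping step that patches the finitely many missing ``high-high-high'' frequencies $N_3\sim N_2$ left out of Proposition~\ref{HH_est} by invoking Proposition~\ref{HHH_est}. The only points deserving a moment's care are checking that the Corollary's hypotheses simultaneously license both Proposition~\ref{HH_est} (for $N_3\ll N_2$) and Proposition~\ref{HHH_est} (for $N_3\sim N_2$), and verifying that the weights $N_3^s$ and $N_{\max}^{s_c}$ recombine correctly into $N_1^{s_c}N_2^s$ once $s\geq0$ and the three frequencies are comparable.
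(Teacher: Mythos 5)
Your proposal is correct and is exactly the paper's argument: the paper simply states that Proposition~\ref{HH_est} and Proposition~\ref{HHH_est} together imply the corollary, i.e.\ one splits $\sum_{N_{3}\lesssim N_{2}}$ into $\sum_{N_{3}\ll N_{2}}$ (handled by Proposition~\ref{HH_est}) and the $O(1)$ terms with $N_{3}\sim N_{2}$ (handled by Proposition~\ref{HHH_est}, whose hypotheses match those of the corollary). Your additional checks — that the sign condition for $d=2,3$ rules out $\sigma_{i}+\sigma_{j}=0$, and that $N_{3}^{s}N_{\max}^{s_{c}}\sim N_{1}^{s_{c}}N_{2}^{s}$ when all frequencies are comparable — are the right points to verify and are both sound.
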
 
%
%
\section{Time local estimates \label{local_tri_est_2d}}
\begin{prop}\label{HL_est_sub}
Let $s>s_{c}$ $(=d/2-1)$, $0<T< \infty$ if $d\geq 2$ and $s\geq 0$, $T=1$ if $d=1$. 
We assume $\sigma_{1}$, $\sigma_{2}$, $\sigma_{3}\in \R \backslash \{ 0\}$ satisfy $(\sigma_{1}+\sigma_{2})(\sigma_{2}+\sigma_{3})(\sigma_{3}+\sigma_{1})\neq 0$. 
For any dyadic numbers $N_{2}$, $N_{3}\in 2^{\Z}$ with $N_{2}\sim N_{3}$, we have
\begin{equation}\label{hl_sub}
\begin{split}
&\left|\sum_{N_{1}\ll N_{2}}N_{\max}\int_{0}^{T}\int_{\R^{d}}(P_{N_{1}}u_{1})(P_{N_{2}}u_{2})(P_{N_{3}}u_{3})dxdt\right|\\
&\lesssim 
T^{\delta}\left(\displaystyle \sum_{N_{1}\ll N_{2}}(N_{1}\vee 1)^{2s}||P_{N_{1}}u_{1}||_{V^{2}_{\sigma_{1}}}^{2}\right)^{1/2}||P_{N_{2}}u_{2}||_{V^{2}_{\sigma_{2}}}||P_{N_{3}}u_{3}||_{V^{2}_{\sigma_{3}}}
\end{split}
\end{equation}
for some $\delta >0$, where $\displaystyle N_{\max}:=\max_{1\leq j\leq 3}N_{j}$. 
\end{prop}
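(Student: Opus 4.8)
The plan is to mimic the proof of Proposition~\ref{HL_est}, but to insert a small power of $T$ by exploiting the gap between the allowed regularity $s$ and the scaling-critical $s_c$ (and, for $d=1$, between $s\ge 0$ and the $L^6$-Strichartz exponent). Set $f_{j,N_j,T}:=\ee_{[0,T)}P_{N_j}u_j$ and, with $M:=C^{-1}N_{\max}^2$ for $C$ large, decompose $Id=Q^{\sigma_j}_{<M}+Q^{\sigma_j}_{\ge M}$ for $j=1,2,3$. Since $N_1\ll N_2\sim N_3$, Lemma~\ref{modul_est}~(i) gives that the fully low-modulation piece vanishes, so only the terms with $Q^{\sigma_j}_{\ge M}$ for some $j$ survive, exactly as before.

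For the case $Q_1^{\sigma_1}=Q_{\ge M}^{\sigma_1}$ I would argue as in \eqref{HL_1_est}: by Hölder and $\dot H^{s_c}(\R^d)\hookrightarrow L^d(\R^d)$ (for $d\ge 2$; for $d=1$ use instead Hölder in time on $[0,1]$ together with $L^2L^\infty$- or $L^6$-type Strichartz, at the cost of a power of $T=1$ which is harmless), we bound the integral by $\|N_{\max}|\nabla|^{s_c}Q_{\ge M}^{\sigma_1}f_{1,N_1,T}\|_{L^2_{tx}}$ times two $L^4_tL^{2d/(d-1)}_x$ norms. The first factor is $\lesssim N_{\max} M^{-1/2}N_1^{s_c}\|f_{1,N_1,T}\|_{V^2_{\sigma_1}}\sim N_1^{s_c}\|f_{1,N_1,T}\|_{V^2_{\sigma_1}}$ by \eqref{highMproj} and $M\sim N_{\max}^2$. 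To produce the $T^\delta$ and to replace $N_1^{s_c}$ by $(N_1\vee 1)^s$, note that on the time interval $[0,T]$ one has, by Hölder in $t$, $\|\ee_{[0,T)}g\|_{L^4_t L^r_x}\lesssim T^{1/4-1/p}\|g\|_{L^p_t L^r_x}$ for $p>4$; choosing $(p,r)$ a Strichartz pair slightly off the endpoint and using \eqref{V_Stri}, each of the two remaining factors becomes $\lesssim T^{\delta'}\|f_{j,N_j,T}\|_{V^2_{\sigma_j}}$ (and one may also trade a small power $(N_j/N_j)^{0+}$ to convert $N_1^{s_c}$ into $N_1^{s-}$; the dyadic sum $\sum_{N_1\ll N_2}N_1^{2(s_c-s)}$ converges when summed against $(N_1\vee 1)^{2s}$). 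For the low-frequency part $N_1\lesssim 1$ the factor $(N_1\vee1)^s=1$ dominates $N_1^{s_c}$ directly when $s_c\ge 0$; when $s_c<0$ (i.e. $d=1$) one instead uses \eqref{Ubilinear_1d}/\eqref{Vbilinear_1d} with the $L^{1/6}$ bound, which is bounded by a constant for $N_1\lesssim 1$, absorbing it into $T^\delta$ with $T=1$.

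For the cases $Q_2^{\sigma_2}=Q_{\ge M}^{\sigma_2}$ and $Q_3^{\sigma_3}=Q_{\ge M}^{\sigma_3}$ I would follow \eqref{HL_3_est}–\eqref{HL_4_est}: Cauchy–Schwarz splits off $\|Q_{\ge M}^{\sigma_3}f_{3,N_3,T}\|_{L^2_{tx}}\lesssim M^{-1/2}\|f_{3,N_3,T}\|_{V^2_{\sigma_3}}$, and the remaining bilinear term is handled by \eqref{Vbilinear} (resp.\ \eqref{Vbilinear_1d} for $d=1$), \eqref{Vproj} and Cauchy–Schwarz in the dyadic sum, which yields $\sum_{N_1\ll N_2}N_1^{s_c}(N_1/N_2)^{1/4}\cdots\lesssim\big(\sum_{N_1\ll N_2}N_1^{2s_c}\|\cdot\|^2\big)^{1/2}\|\cdot\|$. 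Again one converts $N_1^{s_c}$ into $(N_1\vee 1)^s$ by splitting the sum at $N_1\sim 1$ and using that $\sum_{N_1\ll N_2}N_1^{2(s_c-s)}(N_1/N_2)^{1/2}$ and $\sum_{N_1\lesssim 1}N_1^{2s_c}$ converge; the power $T^\delta$ is inserted exactly as above by taking the Strichartz exponents in the proof of \eqref{Vbilinear} slightly sub-critical and using Hölder in $t$ on $[0,T]$.

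The main obstacle is the simultaneous bookkeeping of three things in the worst case $Q_2$ or $Q_3$ high-modulation: (a) squeezing out a genuine positive power $T^\delta$ — this forces one to work with non-endpoint Strichartz exponents and to check that Proposition~\ref{intpol}/Corollary~\ref{UVbilinear} still apply with those exponents and with an extra $T$-factor from Hölder in time; (b) making the dyadic sum over $N_1\ll N_2$ converge after replacing $N_1^{s_c}$ by $(N_1\vee 1)^s$, which requires $s>s_c$ precisely to get a summable geometric factor $N_1^{2(s-s_c)}$ (for $d\ge2$) and, for $d=1$, leaning on the $\min\{L^{1/6},\dots\}$ bound in \eqref{Vbilinear_1d}; and (c) uniformity of all implicit constants in $T$ as $T\to 0$, which is automatic since every estimate used is either scale-invariant or loses only a positive power of $T$. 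Once these are in place the three cases combine verbatim as in Proposition~\ref{HL_est}.
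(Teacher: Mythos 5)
Your outline coincides with the paper's: the same modulation decomposition with $M\sim N_{\max}^{2}$, the same use of Lemma~\ref{modul_est}~(i) to kill the fully low-modulation piece, and the same three cases as in Proposition~\ref{HL_est}, with the factor $T^{\delta}$ to be extracted by perturbing exponents and paying with $s>s_{c}$. The gap is in the mechanism by which $T^{\delta}$ is actually produced. In the case $Q_{1}^{\sigma_{1}}=Q_{\ge M}^{\sigma_{1}}$ you keep $|\nabla|^{s_{c}}$ and the embedding $\dot H^{s_{c}}\hookrightarrow L^{d}$, and try to gain $T^{\delta}$ from $\|\ee_{[0,T)}g\|_{L^{4}_{t}L^{r}_{x}}\lesssim T^{1/4-1/p}\|g\|_{L^{p}_{t}L^{r}_{x}}$ with $p>4$. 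But H\"older on a finite interval only \emph{lowers} the time exponent, so the right-hand side involves the pair $(p,r)$ with $p>4$ and $r=2d/(d-1)$, which is no longer admissible, and $\|g\|_{L^{p}_{t}L^{r}_{x}}\lesssim\|g\|_{V^{2}_{\sigma}}$ fails for such pairs (the global Strichartz estimate only holds for sharp-admissible exponents, by scaling). If instead you lower $r$ to restore admissibility, the spatial H\"older no longer closes against the $L^{d}$ bound on the first factor. The paper's resolution is to shift the Sobolev index of the low-frequency factor: use $\dot H^{s_{c}+2\delta}\hookrightarrow L^{d/(1-2\delta)}$, place $\ee_{[0,T)}$ in $L^{1/\delta}_{t}$, and put the other two factors in the admissible pair $(4/(1-2\delta),\,2d/(d-1+2\delta))$; the extra $2\delta$ derivatives on $P_{N_{1}}u_{1}$ are exactly what the choice $\delta<(s-s_{c})/2$ pays for and what converts $N_{1}^{s_{c}}$ into $(N_{1}\vee 1)^{s}$.

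In the cases $Q_{2}^{\sigma_{2}}=Q_{\ge M}^{\sigma_{2}}$ and $Q_{3}^{\sigma_{3}}=Q_{\ge M}^{\sigma_{3}}$ you retain $\|Q_{\ge M}^{\sigma_{3}}f_{3,N_{3},T}\|_{L^{2}_{tx}}\lesssim M^{-1/2}\|f_{3,N_{3},T}\|_{V^{2}_{\sigma_{3}}}$, which carries no power of $T$, and defer the $T^{\delta}$ to an unproved $T$-dependent refinement of \eqref{Vbilinear}; since \eqref{Vbilinear} descends from the scale-invariant bilinear estimate of Lemma~\ref{L2be}, no power of $T$ can be squeezed out of it without a genuinely new argument. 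The paper's device is simpler: take the $T^{\delta}$ from the high-modulation factor itself, via H\"older in time and \eqref{highMproj} with $p=2/(1-2\delta)$, namely $\|Q_{\ge M}^{\sigma_{3}}f_{3,N_{3},T}\|_{L^{2}_{tx}}\le\|\ee_{[0,T)}\|_{L^{1/\delta}_{t}}\|Q_{\ge M}^{\sigma_{3}}f_{3,N_{3},T}\|_{L^{2/(1-2\delta)}_{t}L^{2}_{x}}\lesssim T^{\delta}M^{-(1-2\delta)/2}\|f_{3,N_{3},T}\|_{V^{2}_{\sigma_{3}}}$, the loss $M^{\delta}\sim N_{\max}^{2\delta}$ again being absorbed by $s>s_{c}$. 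Finally, your summability claim for $\sum_{N_{1}\ll N_{2}}N_{1}^{2(s_{c}-s)}$ is false at the small-$N_{1}$ end (the exponent is negative there); convergence for $N_{1}<1$ rests on the transversality factor $(N_{1}/N_{2})^{1/4}$ in \eqref{Vbilinear} and, for $d=1$, on the $L^{1/6}$ branch of \eqref{Vbilinear_1d}, which you do correctly invoke.
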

\begin{proof}
First, we assume $d\geq 2$. We choose $\delta >0$ satisfying $\delta <(s-s_{c})/2$ and $\delta \ll 1$. 
In the proof of proposition~\ref{HL_est}, for L.H.S of (\ref{HL_1_est}), 
we use the Sobolev embedding $\dot{H}^{s_{c}+2\delta}\hookrightarrow L^{d/(1-2\delta )}$ 
instead of $\dot{H}^{s_{c}}\hookrightarrow L^{d}$ . Then we have
\[
\begin{split}
&\left|\sum_{N_{1}\ll N_{2}}N_{\max}\int_{\R}\int_{\R^{d}}(Q_{\geq M}^{\sigma_{1}}f_{1,N_{1},T})(Q_{2}^{\sigma_{2}}f_{2,N_{2},T})(Q_{3}^{\sigma_{3}}f_{3,N_{3},T})dxdt\right|\\
&\lesssim ||\ee_{[0,T)}||_{L^{1/\delta }_{t}}\left|\left|\sum_{N_{1}\ll N_{2}}N_{\max}|\nabla |^{s_{c}+2\delta}Q_{\geq M}^{\sigma_{1}}f_{1,N_{1},T}\right|\right|_{L^{2}_{tx}}
||Q_{2}^{\sigma_{2}}f_{2,N_{2},T}||_{L_{t}^{p}L_{x}^{q}}||Q_{3}^{\sigma_{3}}f_{3,N_{3},T}||_{L_{t}^{p}L_{x}^{q}}\\
&\leq T^{\delta }\left|\left|\sum_{N_{1}\ll N_{2}}N_{\max}\langle \nabla \rangle^{s}Q_{\geq M}^{\sigma_{1}}f_{1,N_{1},T}\right|\right|_{L^{2}_{tx}}
||Q_{2}^{\sigma_{2}}f_{2,N_{2},T}||_{L_{t}^{p}L_{x}^{q}}||Q_{3}^{\sigma_{3}}f_{3,N_{3},T}||_{L_{t}^{p}L_{x}^{q}}
\end{split}
\]
with $(p, q)=(4/(1-2\delta ), 2d/(d-1+2\delta))$ which is the admissible pair of the Strichartz estimate. 
Furthermore for L.H.S of (\ref{HL_3_est}), 
we use the H\"older's inequality and (\ref{highMproj}) with $p=2/(1-2\delta )$ instead of $p=2$. 
Then we have
\[
||Q_{\geq M}^{\sigma_{3}}f_{3,N_{3},T}||_{L^{2}_{tx}}
\leq ||\ee_{[0,T)}||_{L^{1/\delta}_{t}}||Q_{\geq M}^{\sigma_{3}}f_{3,N_{3},T}||_{L_{t}^{2/(1-2\delta )}L_{x}^{2}}
\lesssim T^{\delta}M^{-(1-2\delta )/2}||f_{3,N_{3},T}||_{V^{2}_{\sigma_{3}}}.
\]
For the other part, by the same way of the proof of proposition~\ref{HL_est}, we obtain (\ref{hl_sub}). 

Next, we assume $d=1$. In the proof of proposition~\ref{HL_est}, for L.H.S of (\ref{HL_1_est}), 
we use the H\"older's inequality as follows: 
\[
\begin{split}
&\left|\sum_{N_{1}\ll N_{2}}N_{\max}\int_{\R}\int_{\R}(Q_{\geq M}^{\sigma_{1}}f_{1,N_{1},T})(Q_{2}^{\sigma_{2}}f_{2,N_{2},T})(Q_{3}^{\sigma_{3}}f_{3,N_{3},T})dxdt\right|\\
&\lesssim ||\ee_{[0,T)}||_{L^{4}_{t}}\left|\left|\sum_{N_{1}\ll N_{2}}N_{\max}Q_{\geq M}^{\sigma_{1}}f_{1,N_{1},T}\right|\right|_{L^{2}_{tx}}
||Q_{2}^{\sigma_{2}}f_{2,N_{2},T}||_{L_{t}^{8}L_{x}^{4}}||Q_{3}^{\sigma_{3}}f_{3,N_{3},T}||_{L_{t}^{8}L_{x}^{4}}. 
\end{split}
\]
We note that $(8,4)$ is the admissible pair of the Strichartz estimate for $d=1$.
Furthermore for the first inequality in (\ref{HL_4_est}), we use (\ref{Vbilinear_1d}) instead of (\ref{Vbilinear}). 
For the other part, by the same way of the proof of proposition~\ref{HL_est}, we obtain (\ref{hl_sub}) with $T=1$.  
\end{proof}
\begin{prop}\label{HH_est_sub}
Let $s>s_{c}$ $(=d/2-1)$, $0<T< \infty$ if $d\geq 2$ and $s\geq 0$, $T=1$ if $d=1$. 
We assume $\sigma_{1}$, $\sigma_{2}$, $\sigma_{3}\in \R \backslash \{ 0\}$ satisfy $(\sigma_{1}+\sigma_{2})(\sigma_{2}+\sigma_{3})(\sigma_{3}+\sigma_{1})\neq 0$. 
For any dyadic numbers $N_{1}$, $N_{2}\in 2^{\Z}$ with $N_{1}\sim N_{2}$, we have
\begin{equation}\label{hh_sub}
\begin{split}
&\left(\sum_{N_{3}\ll N_{2}}N_{3}^{2s}
\sup_{||u_{3}||_{V^{2}_{\sigma_{3}}}=1}\left|N_{\max}\int_{0}^{T}\int_{\R^{d}}(P_{N_{1}}u_{1})(P_{N_{2}}u_{2})(P_{N_{3}}u_{3})dxdt\right|^{2}
\right)^{1/2}\\
&\lesssim  T^{\delta}(N_{1}\vee 1)^{s}||P_{N_{1}}u_{1}||_{V^{2}_{\sigma_{1}}}(N_{2}\vee 1)^{s}||P_{N_{2}}u_{2}||_{V^{2}_{\sigma_{2}}}. 
\end{split}
\end{equation}
for some $\delta >0$, where $\displaystyle N_{\max}:=\max_{1\leq j\leq 3}N_{j}$. 
\end{prop}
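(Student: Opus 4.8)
The plan is to repeat the proof of Proposition~\ref{HH_est} with the same case division, inserting at the two places where a power of $N_{\max}$ is produced from the modulation the loss-of-$T^{\delta}$ device used in the proof of Proposition~\ref{HL_est_sub}. For $d\ge 2$ I would fix $\delta>0$ with $\delta<(s-s_{c})/2$ and $\delta\ll1$; for $d=1$ I would take $T=1$ and $\delta=0$, using the $1$-dimensional Strichartz pairs and the bilinear estimate \eqref{Vbilinear_1d}. Put $f_{j,N_{j},T}:=\ee_{[0,T)}P_{N_{j}}u_{j}$, $M:=C^{-1}N_{\max}^{2}$ with $C$ large, and decompose $Id=Q^{\sigma_{j}}_{<M}+Q^{\sigma_{j}}_{\ge M}$, so that the left-hand side of \eqref{hh_sub} splits into eight pieces $\int_{\R}\int_{\R^{d}}(Q^{\sigma_{1}}_{1}f_{1,N_{1},T})(Q^{\sigma_{2}}_{2}f_{2,N_{2},T})(Q^{\sigma_{3}}_{3}f_{3,N_{3},T})\,dx\,dt$. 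Since $N_{3}\ll N_{2}\sim N_{1}$, Lemma~\ref{modul_est}~(i) makes the all-low piece vanish, so only the seven pieces with at least one $Q^{\sigma_{j}}_{j}=Q^{\sigma_{j}}_{\ge M}$ remain.

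For the pieces with $Q^{\sigma_{1}}_{1}=Q^{\sigma_{1}}_{\ge M}$ (and, by symmetry, $Q^{\sigma_{2}}_{2}=Q^{\sigma_{2}}_{\ge M}$) I would use Cauchy--Schwarz to peel off $\|Q^{\sigma_{1}}_{\ge M}f_{1,N_{1},T}\|_{L^{2}_{tx}}$ and bound it, as in Proposition~\ref{HL_est_sub}, by H\"older in time together with \eqref{highMproj} at $p=2/(1-2\delta)$, obtaining $T^{\delta}M^{-(1-2\delta)/2}\|f_{1,N_{1},T}\|_{V^{2}_{\sigma_{1}}}$; the companion factor $\|(Q^{\sigma_{2}}_{2}f_{2,N_{2},T})(Q^{\sigma_{3}}_{3}f_{3,N_{3},T})\|_{L^{2}_{tx}}$ is controlled by \eqref{Vbilinear} (resp.\ \eqref{Vbilinear_1d}) and \eqref{Vproj}. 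Since $N_{\max}\,T^{\delta}M^{-(1-2\delta)/2}\sim T^{\delta}N_{\max}^{2\delta}$, squaring, summing over $N_{3}\ll N_{2}$ against $N_{3}^{2s}$, and using the decay $(N_{3}/N_{2})^{1/2}$ from \eqref{Vbilinear} leaves a bound of the shape $T^{2\delta}N_{2}^{2s+2s_{c}+4\delta}\|f_{1,N_{1},T}\|_{V^{2}_{\sigma_{1}}}^{2}\|f_{2,N_{2},T}\|_{V^{2}_{\sigma_{2}}}^{2}$; taking square roots and using $N_{1}\sim N_{2}$, $\|\ee_{[0,T)}f\|_{V^{2}_{\sigma}}\lesssim\|f\|_{V^{2}_{\sigma}}$, and $s_{c}+2\delta<s$ gives \eqref{hh_sub}. (For $d=1$, with $\delta=0$, $T=1$, one argues the same way, the $N_{3}$-sum now involving $\min\{N_{3}^{1/3},(1+\ln N_{2})^{4}N_{2}^{-1}\}$ from \eqref{Vbilinear_1d}.)

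For the piece with $Q^{\sigma_{3}}_{3}=Q^{\sigma_{3}}_{\ge M}$ I would follow Proposition~\ref{HH_est}: with $\widetilde{P}_{N_{3}}:=P_{N_{3}/2}+P_{N_{3}}+P_{2N_{3}}$, Cauchy--Schwarz bounds the integral by $\|\widetilde{P}_{N_{3}}((Q^{\sigma_{1}}_{1}f_{1,N_{1},T})(Q^{\sigma_{2}}_{2}f_{2,N_{2},T}))\|_{L^{2}_{tx}}\,\|Q^{\sigma_{3}}_{\ge M}f_{3,N_{3},T}\|_{L^{2}_{tx}}$; I would bound the second factor by $T^{\delta}M^{-(1-2\delta)/2}\|f_{3,N_{3},T}\|_{V^{2}_{\sigma_{3}}}$, use the $L^{2}$-orthogonality of the $\widetilde{P}_{N_{3}}$ in the dyadic sum (with $N_{3}\lesssim N_{2}$ and $s\ge0$) to replace $\sum_{N_{3}}N_{3}^{2s}\|\widetilde{P}_{N_{3}}(\cdots)\|_{L^{2}_{tx}}^{2}$ by $N_{2}^{2s}\|(Q^{\sigma_{1}}_{1}f_{1,N_{1},T})(Q^{\sigma_{2}}_{2}f_{2,N_{2},T})\|_{L^{2}_{tx}}^{2}$, and estimate this last norm by \eqref{U4_est} together with $V^{2}_{-,rc}\hookrightarrow U^{4}$ and \eqref{Vproj} (by \eqref{U8_est} and $V^{2}_{-,rc}\hookrightarrow U^{8}$ for $d=1$). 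Collecting $N_{\max}^{2}\,T^{2\delta}M^{-(1-2\delta)}\sim T^{2\delta}N_{\max}^{4\delta}$ gives again $T^{2\delta}N_{2}^{2s+2s_{c}+4\delta}\|f_{1,N_{1},T}\|_{V^{2}_{\sigma_{1}}}^{2}\|f_{2,N_{2},T}\|_{V^{2}_{\sigma_{2}}}^{2}$, and one concludes as before.

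I expect the main obstacle to be the (routine but delicate) bookkeeping of the surplus power $N_{\max}^{2\delta}$ left over from using \eqref{highMproj} at $p=2/(1-2\delta)$ instead of $p=2$: it must be absorbed into $(N_{1}\vee1)^{s}(N_{2}\vee1)^{s}$ uniformly in frequency, which is automatic when $N_{2}\lesssim1$ (only $s+s_{c}+2\delta\ge0$ is needed) but for $N_{2}\gtrsim1$ requires $s_{c}+2\delta<s$, hence the restriction $s>s_{c}$. For $d=1$ the corresponding delicate point is the summation $\sum_{N_{3}\ll N_{2}}N_{3}^{2s}\min\{N_{3}^{1/3},(1+\ln N_{2})^{4}N_{2}^{-1}\}$: the part $N_{3}\le1$ converges because $2s+\tfrac13>0$, while the part $1\le N_{3}\ll N_{2}$ is $\lesssim N_{2}^{2s}$ because $(1+\ln N_{2})^{4}N_{2}^{-1}\lesssim1$; this is exactly where $s\ge0$ (and $T=1$) enters.
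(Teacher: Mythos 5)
Your proposal is correct and follows essentially the same route as the paper's proof: the same modulation decomposition with $M\sim N_{\max}^{2}$, the same H\"older-in-time device with (\ref{highMproj}) at $p=2/(1-2\delta)$ to produce the $T^{\delta}$ gain (with the choice $\delta<(s-s_{c})/2$), and for $d=1$ the same substitution of (\ref{Vbilinear_1d}) and the $U^{8}$ embedding with $T=1$. Your bookkeeping of the surplus $N_{\max}^{2\delta}$ and of the $d=1$ dyadic sum is more explicit than the paper's, but the argument is identical in substance.
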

\begin{proof}
First, we assume $d\geq 2$. We choose $\delta >0$ satisfying $\delta <(s-s_{c})/2$ and $\delta \ll 1$. 
In the proof of proposition~\ref{HH_est}, for L.H.S of (\ref{HH_1_est}) and (\ref{HH_3_est}), 
we use the H\"older's inequality and (\ref{highMproj}) with $p=2/(1-2\delta )$ instead of $p=2$. 
Then we have
\[
\begin{split}
&||Q_{\geq M}^{\sigma_{1}}f_{1,N_{1},T}||_{L^{2}_{tx}}
\leq ||\ee_{[0,T)}||_{L^{1/\delta}_{t}}||Q_{\geq M}^{\sigma_{1}}f_{1,N_{1},T}||_{L_{t}^{2/(1-2\delta )}L_{x}^{2}}
\lesssim T^{\delta}M^{-(1-2\delta )/2}||f_{1,N_{1},T}||_{V^{2}_{\sigma_{1}}},\\
&||Q_{\geq M}^{\sigma_{3}}f_{3,N_{3},T}||_{L^{2}_{tx}}
\leq ||\ee_{[0,T)}||_{L^{1/\delta}_{t}}||Q_{\geq M}^{\sigma_{3}}f_{3,N_{3},T}||_{L_{t}^{2/(1-2\delta )}L_{x}^{2}}
\lesssim T^{\delta}M^{-(1-2\delta )/2}||f_{3,N_{3},T}||_{V^{2}_{\sigma_{3}}}.
\end{split}
\]
For the other part, by the same way of the proof of proposition~\ref{HH_est}, we obtain (\ref{hh_sub}).  

Next, we assume $d=1$. 
In the proof of proposition~\ref{HH_est}, for L.H.S of (\ref{HH_2_est}), 
we use (\ref{Vbilinear_1d}) instead of (\ref{Vbilinear}) 
and for the third inequality in (\ref{HH_4_est}), we use (\ref{U8_est}) and $V_{-,rc}^{2}\hookrightarrow U^{8}$ instead of 
(\ref{U4_est}) and $V_{-,rc}^{2}\hookrightarrow U^{4}$. 
For the other part, by the same way of the proof of proposition~\ref{HH_est}, we obtain (\ref{hh_sub}) with $T=1$.   
\end{proof}
\begin{prop}\label{HHH_est_sub}\kuuhaku \\
{\rm (i)} Let $d\geq 4$, $s>s_{c}$ and $0<T<\infty$. 
For any $\sigma_{1}$, $\sigma_{2}$, $\sigma_{3}\in \R \backslash \{ 0\}$, any dyadic numbers $N_{1}$, $N_{2}$, $N_{3}\in 2^{\Z}$ 
 and $1\leq j\leq 3$, we have
\begin{equation}\label{hhh_sub}
\begin{split}
&\left|N_{j}\int_{0}^{T}\int_{\R^{d}}(P_{N_{1}}u_{1})(P_{N_{2}}u_{2})(P_{N_{3}}u_{3})dxdt\right|\\
&\lesssim 
T^{\delta}(N_{j}\vee 1)^{s}||P_{N_{1}}u_{1}||_{V^{2}_{\sigma_{1}}}||P_{N_{2}}u_{2}||_{V^{2}_{\sigma_{2}}}||P_{N_{3}}u_{3}||_{V^{2}_{\sigma_{3}}}. 
\end{split}
\end{equation}
for some $\delta >0$.\\ 
{\rm (ii)} Let $d=1$, $2$, $3$, $s\geq 1$, $0<T<\infty$. 
For any $\sigma_{1}$, $\sigma_{2}$, $\sigma_{3}\in \R \backslash \{ 0\}$, any dyadic numbers $N_{1}$, $N_{2}$, $N_{3}\in 2^{\Z}$ and $1\leq j\leq 3$ 
, we have {\rm (\ref{hhh_sub})}.\\
{\rm (iii)} Let $s>s_{c}$, $0<T<\infty$ if $d=2$, $3$ and $s\geq 0$, $T=1$ if $d=1$. 
We assume $\sigma_{1}$, $\sigma_{2}$, $\sigma_{3}\in \R \backslash \{ 0\}$ satisfy $\sigma_{1}\sigma_{2}\sigma_{3}(1/\sigma_{1}+1/\sigma_{2}+1/\sigma_{3})>0$. 
For any dyadic numbers $N_{1}$, $N_{2}$, $N_{3}\in 2^{\Z}$ with $N_{1}\sim N_{2}\sim N_{3}$ and $1\leq j\leq 3$, we have {\rm (\ref{hhh_sub})}.
\end{prop}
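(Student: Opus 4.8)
The plan is to treat the three parts in order, in each case adapting the time-global estimate of Proposition~\ref{HHH_est} and the $T^\delta$-insertion device already used in Propositions~\ref{HL_est_sub} and~\ref{HH_est_sub}. For part~(i) ($d\geq 4$), since $s>s_c$ I would fix $\delta\in(0,(s-s_c)/2)$ and take $(p,q)$ to be the admissible Strichartz pair determined by $3/p=1-\delta$, so that $1/q=1/2-2(1-\delta)/(3d)$; then $q\in(2,3]$ (here $d\geq4$ is used to guarantee $q\leq3$), hence $q\leq a_j$ below, and $(p,q)$ is indeed admissible. Putting the derivative on the $j$-th factor, I apply H\"older in $x$ with the two factors $\neq j$ in $L^q_x$ and the $j$-th in $L^{a_j}_x$, where $1/a_j:=1-2/q\in[0,1/q]$; then Bernstein on the $j$-th factor, $\|P_{N_j}u_j\|_{L^{a_j}_x}\lesssim N_j^{d(1/q-1/a_j)}\|P_{N_j}u_j\|_{L^q_x}$; then H\"older in $t$ to pull out $\|\ee_{[0,T)}\|_{L^{1/\delta}_t}=T^\delta$ (legitimate since $3/p+\delta=1$); and finally~(\ref{V_Stri}) on each of the three $L^p_tL^q_x$-norms. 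This yields
\[
\Big|N_j\!\int_0^T\!\!\int_{\R^d}(P_{N_1}u_1)(P_{N_2}u_2)(P_{N_3}u_3)\,dxdt\Big|\lesssim T^\delta N_j^{\,1+d(1/q-1/a_j)}\prod_{i=1}^3\|P_{N_i}u_i\|_{V^2_{\sigma_i}}.
\]
Since $d(1/q-1/a_j)=d(3/q-1)=s_c-1+2\delta$, the exponent on $N_j$ equals $s_c+2\delta$, which by the choice of $\delta$ satisfies $0<s_c+2\delta\leq s$, whence $N_j^{s_c+2\delta}\leq(N_j\vee1)^s$ and~(\ref{hhh_sub}) follows. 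For part~(ii) ($d=1,2,3$) the observation that $s\geq1$ forces $N_j\leq(N_j\vee1)^s$ reduces the claim to the derivative-free bound
\[
\Big|\int_0^T\!\!\int_{\R^d}(P_{N_1}u_1)(P_{N_2}u_2)(P_{N_3}u_3)\,dxdt\Big|\lesssim T^{(4-d)/4}\prod_{i=1}^3\|P_{N_i}u_i\|_{V^2_{\sigma_i}},
\]
which follows from H\"older (three factors in $L^{12/d}_tL^3_x$, with $\ee_{[0,T)}\in L^{4/(4-d)}_t$), the admissibility of $(12/d,3)$ for $d\leq3$, and~(\ref{V_Stri}); no frequency-interaction structure is needed here.

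For part~(iii) ($d=1,2,3$, $N_1\sim N_2\sim N_3=:N$, $\sigma_1\sigma_2\sigma_3(1/\sigma_1+1/\sigma_2+1/\sigma_3)>0$) I would follow the proof of Proposition~\ref{HHH_est}~(ii): set $M:=C^{-1}N^2$, decompose $Id=Q^{\sigma_k}_{<M}+Q^{\sigma_k}_{\geq M}$ for $k=1,2,3$, and expand the integral as $\int_\R\int_{\R^d}\ee_{[0,T)}\prod_k(Q^{\sigma_k}_{<M}+Q^{\sigma_k}_{\geq M})P_{N_k}u_k$. By Lemma~\ref{modul_est}~(ii) the purely low-modulation term vanishes, and for each of the remaining seven pieces I pick an index $k$ carrying $Q^{\sigma_k}_{\geq M}$, apply Cauchy--Schwarz, use~(\ref{highMproj}) with $p=2/(1-2\delta)$ on that factor (after pulling $\|\ee_{[0,T)}\|_{L^{1/\delta}_t}=T^\delta$ out by H\"older in time, with $\delta\in(0,(s-s_c)/2)$; for $d=1$ one may simply take $p=2$ and $T=1$), which contributes $T^\delta M^{-(1-2\delta)/2}\sim T^\delta N^{-(1-2\delta)}$, together with the bilinear Strichartz estimate on the other two factors ($V^2_{-,rc}\hookrightarrow U^4$ and~(\ref{U4_est}) for $d=2,3$, $V^2_{-,rc}\hookrightarrow U^8$ and~(\ref{U8_est}) for $d=1$) and~(\ref{Vproj}), contributing $N^{s_c}$ (and no power of $N$ for $d=1$). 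Since $N_j\sim N$ and $M\sim N^2$, the net power of $N$ is $N^{1-(1-2\delta)+s_c}=N^{s_c+2\delta}\leq(N_j\vee1)^s$ (for $d=1$ it is $N^{1-1}=1\leq(N_j\vee1)^s$), which gives~(\ref{hhh_sub}).

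The step I expect to be most delicate is the exponent bookkeeping in part~(i): $\delta$ and the pair $(p,q)$ have to be chosen so that the time H\"older leaves exactly the factor $T^\delta$, the Bernstein step produces a power of $N_j$ only (and no powers of the other two frequencies), and simultaneously $(p,q)$ is admissible and $q\leq a_j$ — balancing all of this is precisely what consumes the sub-criticality $s>s_c$. Part~(iii) is then routine once the $T^\delta$-insertion of Propositions~\ref{HL_est_sub}--\ref{HH_est_sub} is in hand; the only point of care is to keep $\ee_{[0,T)}$ as a separate factor so that~(\ref{highMproj}) can be applied legitimately at the exponent $2/(1-2\delta)$.
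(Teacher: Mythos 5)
Your proposal is correct and takes essentially the same route as the paper: the identical H\"older-in-time device with $\ee_{[0,T)}\in L^{1/\delta}_{t}$, the same derivative-free $L^{12/d}_{t}L^{3}_{x}$ bound with $\delta=1-d/4$ in part (ii), and the same modulation decomposition via Lemma~\ref{modul_est}~(ii) with (\ref{highMproj}) at exponent $2/(1-2\delta)$ (resp.\ (\ref{U8_est}) and $T=1$ for $d=1$) in part (iii). In part (i) your Bernstein step on the dyadic block, yielding the gain $N_{j}^{s_{c}-1+2\delta}$, is just the frequency-localized form of the paper's Sobolev embedding $\dot{W}^{s_{c}+2\delta-1,\,6d/(3d-4+12\delta)}\hookrightarrow L^{3d/4}$, so the two arguments coincide up to the cosmetic choice of Strichartz exponents.
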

\begin{proof}
By symmetry, it is enough to prove for $j=3$. 
We choose $\delta >0$ satisfying $\delta <(s-s_{c})/2$ and $\delta \ll 1$. 

First, we consider the case $d\geq 4$. By the H\"older's inequality and the Sobolev embedding $\dot{W}^{s_{c}+2\delta -1, 6d/(3d-4+12\delta )}(\R^{d})\hookrightarrow  L^{3d/4}(\R^{d})$, we have
\[
\begin{split}
&\left|\int_{0}^{T}\int_{\R^{d}}(P_{N_{1}}u_{1})(P_{N_{2}}u_{2})(P_{N_{3}}u_{3})dxdt\right|\\
&\lesssim  ||\ee_{[0,T)}||_{L^{1/\delta}_{t}}||P_{N_{1}}u||_{L^{3}_{t}L^{6d/(3d-4)}_{x}}||P_{N_{2}}u_{2}||_{L^{3}_{t}L^{6d/(3d-4)}_{x}}|||\nabla |^{s_{c}+2\delta -1}(P_{N_{3}}u_{3})||_{L^{p}_{t}L^{q}_{x}}
\end{split}
\]
with $(p,q)=(3/(1-3\delta ), 6d/(3d-4+12\delta )$ which is the admissible pair of the Strichartz estimate. 
Therefore we obtain (\ref{hhh_sub}) by (\ref{V_Stri}). 

Second, we consider the case $d=1$, $2$, $3$ and $\sigma_{1}$, $\sigma_{2}$, $\sigma_{3}\in \R \backslash \{ 0\}$ are arbitary. 
By the H\"older's inequality and (\ref{V_Stri}), we have
\[
\begin{split}
&\left|\int_{0}^{T}\int_{\R^{d}}(P_{N_{1}}u_{1})(P_{N_{2}}u_{2})(P_{N_{3}}u_{3})dxdt\right|\\
&\lesssim  ||\ee_{[0,T)}||_{L^{4/(4-d)}_{t}}||P_{N_{1}}u_{1}||_{L^{12/d}_{t}L^{3}_{x}}||P_{N_{2}}u_{2}||_{L^{12/d}_{t}L^{3}_{x}}||P_{N_{3}}u_{3}||_{L^{12/d}_{t}L^{3}_{x}}\\
&\lesssim 
T^{1-d/4}||P_{N_{1}}u_{1}||_{V^{2}_{\sigma_{1}}}||P_{N_{2}}u_{2}||_{V^{2}_{\sigma_{2}}}||P_{N_{3}}u_{3}||_{V^{2}_{\sigma_{3}}}
\end{split}
\]
and obtain (\ref{hhh_sub}) as $\delta =1-d/4$ for $s\geq 1$.

Third, we consider the case $d=2$, $3$ and $\sigma_{1}\sigma_{2}\sigma_{3}(1/\sigma_{1}+1/\sigma_{2}+1/\sigma_{3})>0$. 
In the proof of proposition~\ref{HHH_est}, for L.H.S of (\ref{HHH_1_est}), 
we use the H\"older's inequality and (\ref{highMproj}) with $p=2/(1-2\delta )$ instead of $p=2$. 
Then we have
\[
||Q_{\geq M}^{\sigma_{1}}f_{1,N_{1},T}||_{L^{2}_{tx}}
\leq ||\ee_{[0,T)}||_{L^{1/\delta}_{t}}||Q_{\geq M}^{\sigma_{1}}f_{1,N_{1},T}||_{L_{t}^{2/(1-2\delta )}L_{x}^{2}}
\lesssim T^{\delta}M^{-(1-2\delta )/2}||f_{1,N_{1},T}||_{V^{2}_{\sigma_{1}}}. 
\]
For the other part, by the same way of the proof of proposition~\ref{HHH_est}, we obtain (\ref{hhh_sub}).  

Finally, we consider the case $d=1$ and $\sigma_{1}\sigma_{2}\sigma_{3}(1/\sigma_{1}+1/\sigma_{2}+1/\sigma_{3})>0$. 
In the proof of proposition~\ref{HHH_est}, for L.H.S of (\ref{HHH_2_est}), 
we use (\ref{U8_est}) and $V_{-,rc}^{2}\hookrightarrow U^{8}$ instead of 
(\ref{U4_est}) and $V_{-,rc}^{2}\hookrightarrow U^{4}$. 
For the other part, by the same way of the proof of proposition~\ref{HHH_est}, we obtain (\ref{hhh_sub}) with $T=1$.  
\end{proof}
Proposition~\ref{HH_est_sub} and Proposition~\ref{HHH_est_sub} imply the following: 
\begin{cor}\label{HH-HL_est_sub}
Let $0<T<\infty$ if $d\geq 2$ and $T=1$ if $d=1$. We assume $\sigma_{1}$, $\sigma_{2}$, $\sigma_{3}\in \R \backslash \{ 0\}$ satisfy $(\sigma_{1}+\sigma_{2})(\sigma_{2}+\sigma_{3})(\sigma_{3}+\sigma_{1})\neq 0$.\\
{\rm (i)} Let $s>s_{c}$ if $d\geq 4$, and $s\geq 1$ if $d=1$, $2$, $3$. Then the estimate {\rm (\ref{hh_sub})} holds if we replace $\sum_{N_{3}\ll N_{2}}$ by $\sum_{N_{3}\lesssim N_{2}}$. \\
{\rm (ii)} Let $s>s_{c}$ if $d=2$, $3$ and $s\geq 0$ if $d=1$. We assume $\sigma_{1}$, $\sigma_{2}$, $\sigma_{3}\in \R \backslash \{ 0\}$ satisfy $\sigma_{1}\sigma_{2}\sigma_{3}(1/\sigma_{1}+1/\sigma_{2}+1/\sigma_{3})>0$. Then the estimate {\rm (\ref{hh_sub})} holds if we replace $\sum_{N_{3}\ll N_{2}}$ by $\sum_{N_{3}\lesssim N_{2}}$. 
\end{cor}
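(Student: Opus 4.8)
The plan is to split $\sum_{N_3\lesssim N_2}=\sum_{N_3\ll N_2}+\sum_{N_3\sim N_2}$ and estimate the two pieces separately. For the off-diagonal piece $\sum_{N_3\ll N_2}$ there is nothing to prove beyond Proposition~\ref{HH_est_sub}; one only has to check its hypotheses. In case (i), the sign condition $(\sigma_1+\sigma_2)(\sigma_2+\sigma_3)(\sigma_3+\sigma_1)\neq0$ and the stated range of $s$ (with $T=1$ when $d=1$) are exactly the ones assumed there. In case (ii), one first observes that $\sigma_1\sigma_2\sigma_3(1/\sigma_1+1/\sigma_2+1/\sigma_3)=\sigma_1\sigma_2+\sigma_2\sigma_3+\sigma_3\sigma_1>0$ forces $(\sigma_1+\sigma_2)(\sigma_2+\sigma_3)(\sigma_3+\sigma_1)\neq0$ — if for instance $\sigma_1+\sigma_2=0$ then that quantity equals $-\sigma_1^2<0$ — so Proposition~\ref{HH_est_sub} again applies, now over $s>s_c$ for $d=2,3$ and $s\geq0$, $T=1$ for $d=1$.

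For the diagonal piece I would use that the left-hand side of (\ref{hh_sub}) already carries the restriction $N_1\sim N_2$, so on $\{N_3\sim N_2\}$ one has $N_1\sim N_2\sim N_3$ and hence $N_{\max}\sim N_3$; thus Proposition~\ref{HHH_est_sub} applies: in case (i), its part (i) when $d\geq4$ and its part (ii) when $d\in\{1,2,3\}$; in case (ii), its part (iii), the extra requirement $N_1\sim N_2\sim N_3$ of which is precisely what holds on the diagonal. With $j=3$ this yields, for each of the $O(1)$ dyadic values $N_3\sim N_2$,
\[
\Bigl|N_{\max}\int_0^T\!\!\int_{\R^d}(P_{N_1}u_1)(P_{N_2}u_2)(P_{N_3}u_3)\,dxdt\Bigr|
\lesssim T^{\delta}(N_3\vee 1)^{s}\|P_{N_1}u_1\|_{V^2_{\sigma_1}}\|P_{N_2}u_2\|_{V^2_{\sigma_2}}\|P_{N_3}u_3\|_{V^2_{\sigma_3}}.
\]
Taking the supremum over $\|u_3\|_{V^2_{\sigma_3}}=1$, squaring, inserting the weight $N_3^{2s}$, and summing the finitely many $N_3\in\{N_2/2,N_2,2N_2\}$ bounds the $N_3\sim N_2$ part of the sum in (\ref{hh_sub}) by $T^{2\delta}N_2^{2s}(N_2\vee 1)^{2s}\|P_{N_1}u_1\|_{V^2_{\sigma_1}}^2\|P_{N_2}u_2\|_{V^2_{\sigma_2}}^2$; since $N_1\sim N_2$ and $s\geq0$ we have $N_2^{2s}(N_2\vee 1)^{2s}\lesssim(N_1\vee 1)^{2s}(N_2\vee 1)^{2s}$ (trivially if $N_2<1$), so a square root gives the desired bound on this piece.

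Adding the two contributions, with $\delta>0$ the smaller of the two exponents produced above, yields (\ref{hh_sub}) with $\sum_{N_3\ll N_2}$ replaced by $\sum_{N_3\lesssim N_2}$. I expect no genuine analytic difficulty here: the whole content is already in Propositions~\ref{HH_est_sub} and ~\ref{HHH_est_sub}, and the only points to be careful about are the bookkeeping of sign and regularity hypotheses in the two cases and the fact that the diagonal sum has only $O(1)$ terms, so that no further summation in $N_3$ is needed.
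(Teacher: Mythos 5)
Your proposal is correct and is exactly the argument the paper intends: the paper proves this corollary with the single line ``Proposition~\ref{HH_est_sub} and Proposition~\ref{HHH_est_sub} imply the following,'' i.e.\ the off-diagonal sum $N_3\ll N_2$ is Proposition~\ref{HH_est_sub} and the $O(1)$ diagonal terms $N_3\sim N_2$ (where $N_1\sim N_2\sim N_3$) are absorbed via Proposition~\ref{HHH_est_sub} with the appropriate part in each case. Your bookkeeping of the hypotheses, including the use of part (iii) of Proposition~\ref{HHH_est_sub} under the positivity condition in case (ii), matches the paper's intent.
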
 
Let $(i,j,k)$ is one of the permutation of $(1,2,3)$. 
If $\sigma_{i}+\sigma_{j}=0$, then Proposition~\ref{modul_est} (i) fails only for the case $|\xi_{k}|\ll |\xi_{i}|\sim |\xi_{j}|$. 
We obtain following estimates for the case $|\xi_{k}|\ll |\xi_{i}|\sim |\xi_{j}|$. 
\begin{cor}\label{HL_HH_est_sub2}
Let $s>s_{c}$ if $d\geq 4$, and $s>1$ if $d=2,3$.\\
{\rm (i)}\ We assume $\sigma_{1}$, $\sigma_{2}$, $\sigma_{3}\in \R \backslash \{ 0\}$ satisfy $\sigma_{2}+\sigma_{3}=0$ and $(\sigma_{1}+\sigma_{2})(\sigma_{3}+\sigma_{1})\neq 0$. 
Then for any $0<T< \infty$, and any dyadic numbers $N_{2}$, $N_{3}\in 2^{\Z}$ with $N_{2}\sim N_{3}$, we have
\begin{equation}\label{hl_sub_2}
\begin{split}
&\left|\sum_{N_{1}\ll N_{2}}N_{1}\int_{0}^{T}\int_{\R^{d}}(P_{N_{1}}u_{1})(P_{N_{2}}u_{2})(P_{N_{3}}u_{3})dxdt\right|\\
&\lesssim 
T^{\delta}\left(\displaystyle \sum_{N_{1}\ll N_{2}}(N_{1}\vee 1)^{2s}||P_{N_{1}}u_{1}||_{V^{2}_{\sigma_{1}}}^{2}\right)^{1/2}||P_{N_{2}}u_{2}||_{V^{2}_{\sigma_{2}}}||P_{N_{3}}u_{3}||_{V^{2}_{\sigma_{3}}}
\end{split}
\end{equation}
{\rm (ii)}\ We assume $\sigma_{1}$, $\sigma_{2}$, $\sigma_{3}\in \R \backslash \{ 0\}$ satisfy $\sigma_{1}+\sigma_{2}=0$ and $(\sigma_{2}+\sigma_{3})(\sigma_{3}+\sigma_{1})\neq 0$. 
Then for any $0<T< \infty$, and any dyadic numbers $N_{1}$, $N_{2}\in 2^{\Z}$ with $N_{1}\sim N_{2}$, we have
\begin{equation}\label{hh_sub_2}
\begin{split}
&\left(\sum_{N_{3}\lesssim N_{2}}N_{3}^{2s}
\sup_{||u_{3}||_{V^{2}_{\sigma_{3}}}=1}\left|N_{3}\int_{0}^{T}\int_{\R^{d}}(P_{N_{1}}u_{1})(P_{N_{2}}u_{2})(P_{N_{3}}u_{3})dxdt\right|^{2}
\right)^{1/2}\\
&\lesssim  T^{\delta}(N_{1}\vee 1)^{s}||P_{N_{1}}u_{1}||_{V^{2}_{\sigma_{1}}}(N_{2}\vee 1)^{s}||P_{N_{2}}u_{2}||_{V^{2}_{\sigma_{2}}}. 
\end{split}
\end{equation}
for some $\delta >0$. 
\end{cor}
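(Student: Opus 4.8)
By the symmetry of the statement it suffices to prove (i); (ii) is obtained the same way after relabelling, noting that when $N_3\sim N_2$ one has $N_1\sim N_2\sim N_3$ so the extra summation is harmless. Throughout, $N_1\ll N_2\sim N_3=:N_{\max}$ and $\sigma_2+\sigma_3=0$. This is precisely the configuration in which Lemma~\ref{modul_est}(i) fails: completing the square as in (\ref{com_squ}) and using $\xi_2=-\xi_1-\xi_3$, $\sigma_3=-\sigma_2$ gives only
\[
\sum_{j=1}^3(\tau_j+\sigma_j|\xi_j|^2)=(\sigma_1+\sigma_2)|\xi_1|^2+2\sigma_2\,\xi_1\cdot\xi_3=O(N_1N_{\max}),
\]
so the ``all low modulation'' part of the trilinear form need not vanish. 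The structural point that rescues the estimate is that the derivative weight is $N_1$, the \emph{low} frequency; since $s\ge1$ in all cases considered, $N_1\lesssim(N_1\vee1)^s$.

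\textbf{The case $d=2,3$.} Here $s>1$ and no interaction geometry is needed. Hölder's inequality in $t$ and $x$ (spatial exponents $3,3,3$) together with the Strichartz estimate (\ref{V_Stri}) for the admissible pair $(12/d,3)$ gives, for $0<T<\infty$,
\[
\Big|\int_0^T\!\!\int_{\R^d}(P_{N_1}u_1)(P_{N_2}u_2)(P_{N_3}u_3)\,dx\,dt\Big|\lesssim T^{1-d/4}\prod_{i=1}^3\|P_{N_i}u_i\|_{V^2_{\sigma_i}}.
\]
Multiplying by the weight $N_1\lesssim(N_1\vee1)^s$ and applying the Cauchy--Schwarz inequality to the dyadic sum in $N_1$ yields (\ref{hl_sub_2}) with $\delta=1-d/4$, because $\sum_{N_1}N_1^2(N_1\vee1)^{-2s}<\infty$ for $s>1$. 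Estimate (\ref{hh_sub_2}) follows in the same way, using $N_1\sim N_2$, $\sum_{N_3\lesssim N_2}N_3^{2s+2}\lesssim(N_2\vee1)^{2s+2}$ and $s\ge1$.

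\textbf{The case $d\ge4$.} Here one keeps the modulation decomposition from the proofs of Propositions~\ref{HL_est_sub} and \ref{HH_est_sub}, with $M:=C^{-1}N_{\max}^2$. In every piece in which at least one of the three factors has modulation $\gtrsim M$, (\ref{highMproj}) supplies a factor $M^{-1/2}\sim N_{\max}^{-1}$; since the weight is only $N_1(\le N_{\max})$, this absorbs the derivative and still leaves a surplus $\sim N_1/N_{\max}$. Coupling that factor with (\ref{Vbilinear}) for the remaining low--high pair (and, for the high--high$\to$low pair, with the bilinear $L^2$ bound that is the ``output'' version of Lemma~\ref{L2be}), one checks that the sum over $N_1\ll N_2$ converges for every $s>s_c$. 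The part that does \emph{not} vanish is the ``all low modulation'' piece. I would decompose it dyadically by $\big|\sum_j(\tau_j+\sigma_j|\xi_j|^2)\big|\sim R$, $R$ dyadic and $\lesssim N_1N_{\max}$. On each such piece at least one modulation is $\gtrsim R$, giving $R^{-1/2}$ from (\ref{highMproj}); simultaneously the constraint $(\sigma_1+\sigma_2)|\xi_1|^2+2\sigma_2\,\xi_1\cdot\xi_3=O(R)$ confines $\xi_1$, for each fixed $\xi_3$, to a slab of thickness $\sim R/N_{\max}$ (here $\sigma_2\ne0$ and $|\xi_1|\ll|\xi_3|$ make the gradient in $\xi_1$ of size $\sim N_{\max}$), i.e.\ to a subset of $A_{N_1}$ of measure $\lesssim N_1^{d-1}R/N_{\max}$ — smaller than $|A_{N_1}|\sim N_1^d$ by the factor $R/(N_1N_{\max})$. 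Feeding this gain through Bernstein's inequality into a Hölder--Strichartz bound for the two factors other than the high-modulation one, summing the resulting geometric series in $R$, and then summing the dyadic series in $N_1$ (convergent for $s>s_c$), gives (\ref{hl_sub_2}); (\ref{hh_sub_2}) is parallel, with $N_1\sim N_2$.

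\textbf{Main obstacle.} The difficulty is concentrated in this last piece when $d\ge4$ and $s$ is close to $s_c$: any bound that ignores the interaction geometry loses one full derivative relative to the scaling, so one must genuinely use the weak transversality recorded by $\sum_j(\tau_j+\sigma_j|\xi_j|^2)=(\sigma_1+\sigma_2)|\xi_1|^2+2\sigma_2\,\xi_1\cdot\xi_3$, together with the hypotheses $(\sigma_1+\sigma_2)(\sigma_1+\sigma_3)\ne0$ which prevent a secondary degeneracy of the level sets. Carrying out the $R$-decomposition while tracking the two dyadic summations, and making the slab/measure estimate uniform in $\xi_3$, is where essentially all of the work lies.
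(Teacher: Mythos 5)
Your treatment of $d=2,3$ is correct and is essentially the paper's argument: a crude H\"older--Strichartz bound with the pair $(12/d,3)$, the observation that the derivative weight is the \emph{low} frequency $N_1$ (resp.\ $N_3$), and a Cauchy--Schwarz dyadic summation that converges precisely when $s>1$. (The paper phrases this as an application of Proposition~\ref{HHH_est_sub}\,(ii) with $j$ the low-frequency index, supplemented by the elementary bound (\ref{hhh_sub_sub}) for $N_1<1$ because the stated form $(N_1\vee 1)^s$ of (\ref{hhh_sub}) discards the $N_1$-decay there; since you keep the explicit factor $N_1$, your low-frequency summation is fine as written.)

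The gap is in the case $d\geq 4$. There you abandon the elementary route and propose a modulation decomposition followed by a dyadic decomposition in the size $R$ of the resonance function, a slab-measure estimate for $\xi_1$ at fixed $\xi_3$, and a Bernstein--H\"older--Strichartz interpolation, and you explicitly defer ``essentially all of the work'' to those steps. None of them is carried out, and the localization to $\{|\sum_j(\tau_j+\sigma_j|\xi_j|^2)|\sim R\}$ is a joint multiplier in $(\tau_1,\xi_1,\tau_3,\xi_3)$ that does not interact cleanly with the atomic $U^2/V^2$ structure, so this is not a routine completion. More importantly, the premise motivating it --- that ``any bound that ignores the interaction geometry loses one full derivative relative to the scaling'' --- is incorrect here, because the weight is $N_1$ and not $N_{\max}$. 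The paper's Proposition~\ref{HHH_est_sub}\,(i), which holds for \emph{arbitrary} $\sigma_1,\sigma_2,\sigma_3$ with no resonance hypothesis, already gives the termwise bound: H\"older with exponents $L^3_tL^{6d/(3d-4)}_x\times L^3_tL^{6d/(3d-4)}_x\times L^{3}_tL^{3d/4}_x$, the Sobolev embedding $\dot W^{s_c-1,\,6d/(3d-4)}(\R^d)\hookrightarrow L^{3d/4}(\R^d)$ applied to the $N_1$-factor, and (\ref{V_Stri}) yield
\begin{equation*}
\left|N_{1}\int_{0}^{T}\int_{\R^{d}}(P_{N_{1}}u_{1})(P_{N_{2}}u_{2})(P_{N_{3}}u_{3})\,dx\,dt\right|
\lesssim T^{\delta}\,N_{1}^{s_{c}+2\delta}\prod_{i=1}^{3}||P_{N_{i}}u_{i}||_{V^{2}_{\sigma_{i}}},
\end{equation*}
which is exactly scaling-critical (no derivative is lost), and the extra $N_1^{2\delta-(s-s_c)}$ makes the dyadic sum over $1\leq N_1\ll N_2$ geometric for any $s>s_c$; the range $N_1<1$ is handled by (\ref{hhh_sub_sub}). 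The same termwise bound with $j=3$ gives (\ref{hh_sub_2}). So the $d\geq4$ case requires no modulation or transversality analysis at all, and as it stands your proof of that case is incomplete.
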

\begin{proof}
By the H\"older's inequality, $V^{2}_{-,rc}\hookrightarrow L^{\infty}(\R ;L^{2})$ and (\ref{Vbilinear}), we have
\begin{equation}\label{hhh_sub_sub}
\begin{split}
&\left|N_{1}\int_{0}^{T}\int_{\R^{d}}(P_{N_{1}}u_{1})(P_{N_{2}}u_{2})(P_{N_{3}}u_{3})dxdt\right|\\
&\leq N_{1}||\ee_{[0,T)}||_{L^{2}_{t}}||(P_{N_{1}}u_{1})(P_{N_{2}}u_{2})||_{L^{2}_{tx}}||P_{N_{3}}u_{3}||_{L^{\infty}_{t}L^{2}_{x}}\\
&\lesssim T^{1/2}N_{1}^{s_{c}+1}\left(\frac{N_{1}}{N_{2}}\right)^{1/2}||P_{N_{1}}u_{1}||_{V^{2}_{\sigma_{1}}}||P_{N_{2}}u_{2}||_{V^{2}_{\sigma_{2}}}||P_{N_{3}}u_{3}||_{V^{2}_{\sigma_{3}}}
\end{split}
\end{equation}
for $N_{1}\ll N_{2}$. We use (\ref{hhh_sub_sub}) for the summation for $N_{1}<1$ 
and use (\ref{hhh_sub}) with $j=1$ for the summation for $1\leq N_{1}\ll N_{2}$. 
Then, we obtain (\ref{hl_sub_2}) by the Cauchy-Schwarz inequality for the dyadic sum. 

The estimate (\ref{hh_sub_2}) is obtained by using (\ref{hhh_sub}) with $j=3$. 
\end{proof}
%
%
\section{Proof of the well-posedness and the scattering \label{proof_thm}}\kuuhaku
In this section, we prove Theorems~\ref{wellposed_1}, ~\ref{wellposed_2}, ~\ref{ddqdnls_wp} and Corollary~\ref{sccat}. 
To begin with, we define the function spaces which spaces will be used to construct the solution. 
\begin{defn}\label{YZ_space}
Let $s$, $\sigma\in \R$.\\
{\rm (i)} We define $\dot{Z}^{s}_{\sigma}:=\{u\in C(\R ; \dot{H}^{s}(\R^{d}))\cap U^{2}_{\sigma }|\ ||u||_{\dot{Z}^{s}_{\sigma}}<\infty\}$ with the norm
\[
||u||_{\dot{Z}^{s}_{\sigma}}:=\left(\sum_{N}N^{2s}||P_{N}u||^{2}_{U^{2}_{\sigma}}\right)^{1/2}.
\]
{\rm (ii)} We define $Z^{s}_{\sigma}:=\{u\in C(\R ; H^{s}(\R^{d}))\cap U^{2}_{\sigma }|\ ||u||_{Z^{s}_{\sigma}}<\infty\}$ with the norm
\[
||u||_{Z^{s}_{\sigma}}:=||u||_{\dot{Z}^{0}_{\sigma}}+||u||_{\dot{Z}^{s}_{\sigma}}. 
\]
{\rm (iii)} We define $\dot{Y}^{s}_{\sigma}:=\{u\in C(\R ; \dot{H}^{s}(\R^{d}))\cap V^{2}_{-,rc,\sigma }|\ ||u||_{\dot{Y}^{s}_{\sigma}}<\infty\}$ with the norm
\[
||u||_{\dot{Y}^{s}_{\sigma}}:=\left(\sum_{N}N^{2s}||P_{N}u||^{2}_{V^{2}_{\sigma}}\right)^{1/2}.
\]
{\rm (iv)} We define $Y^{s}_{\sigma}:=\{u\in C(\R ; H^{s}(\R^{d}))\cap V^{2}_{-,rc,\sigma }|\ ||u||_{Y^{s}_{\sigma}}<\infty\}$ with the norm
\[
||u||_{Y^{s}_{\sigma}}:=||u||_{\dot{Y}^{0}_{\sigma}}+||u||_{\dot{Y}^{s}_{\sigma}}.
\]
\end{defn}
\begin{rem}
Let $E$ be a Banach space of continuous functions $f:\R\rightarrow H$, for some Hilbert space $H$. 
We also consider the corresponding restriction space to the interval $I\subset \R$ by
\[
E(I)=\{u\in C(I,H)|\exists v\in E\ s.t.\ v(t)=u(t),\ t\in I\}
\]
endowed with the norm $||u||_{E(I)}=\inf \{||v||_{E}|v(t)=u(t),\ t\in I\}$. 
Obviously, $E(I)$ is also a Banach space (see Remark\ 2.23 in \cite{HHK09}).
\end{rem}
We define the map $\Phi(u,v,w)=(\Phi_{T, \alpha, u_{0}}^{(1)}(w, v), \Phi_{T, \beta, v_{0}}^{(1)}(\overline{w}, v), \Phi_{T, \gamma, w_{0}}^{(2)}(u, \overline{v}))$ as
\[
\begin{split}
\Phi_{T, \sigma, \varphi}^{(1)}(f,g)(t)&:=e^{it\sigma \Delta}\varphi -I^{(1)}_{T,\sigma}(f,g)(t),\\
\Phi_{T, \sigma, \varphi}^{(2)}(f,g)(t)&:=e^{it\sigma \Delta}\varphi +I^{(2)}_{T,\sigma}(f,g)(t),
\end{split}
\] 
where
\[
\begin{split}
I^{(1)}_{T,\sigma}(f,g)(t)&:=\int_{0}^{t}\ee_{[0,T)}(t')e^{i(t-t')\sigma \Delta}(\nabla \cdot f(t'))g(t')dt',\\
I^{(2)}_{T,\sigma}(f,g)(t)&:=\int_{0}^{t}\ee_{[0,T)}(t')e^{i(t-t')\sigma \Delta}\nabla (f(t')\cdot g(t'))dt'.
\end{split}
\]
To prove the existence of the solution of (\ref{NLS_sys}), we prove that $\Phi$ is a contraction map 
on a closed subset of $\dot{Z}^{s}_{\alpha}([0,T])\times \dot{Z}^{s}_{\beta}([0,T])\times \dot{Z}^{s}_{\gamma}([0,T])$
 or $Z^{s}_{\alpha}([0,T])\times Z^{s}_{\beta}([0,T])\times Z^{s}_{\gamma}([0,T])$. 
 Key estimates are the followings:
\begin{prop}\label{Duam_est}
We assume that $\alpha$, $\beta$, $\gamma \in \R\backslash \{0\}$ satisfy the condition in {\rm Theorem~\ref{wellposed_1}}. Then for $s_{c}=d/2-1$ and any $0<T\leq \infty$, we have
\begin{align}
&||I_{T,\alpha}^{(1)}(w,v)||_{\dot{Z}^{s_{c}}_{\alpha}}\lesssim ||w||_{\dot{Y}^{s_{c}}_{\gamma}}||v||_{\dot{Y}^{s_{c}}_{\beta}},\label{Duam_al}\\
&||I_{T,\beta}^{(1)}(\overline{w},u)||_{\dot{Z}^{s_{c}}_{\beta}}\lesssim ||w||_{\dot{Y}^{s_{c}}_{\gamma}}||u||_{\dot{Y}^{s_{c}}_{\alpha}},\label{Duam_be}\\
&||I_{T,\gamma}^{(2)}(u,\overline{v})||_{\dot{Z}^{s_{c}}_{\gamma}}\lesssim ||u||_{\dot{Y}^{s_{c}}_{\alpha}}||v||_{\dot{Y}^{s_{c}}_{\beta}}.\label{Duam_ga}
\end{align}
\end{prop}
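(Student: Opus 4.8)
The plan is to deduce all three estimates from the trilinear bounds of Sections~\ref{be_for_2d}--\ref{local_tri_est_2d} by the $U^{2}$--$V^{2}$ duality; I explain (\ref{Duam_al}) in detail, and (\ref{Duam_be}), (\ref{Duam_ga}) are treated the same way, the only difference being that in (\ref{Duam_ga}) the derivative falls on the output frequency (still $\lesssim N_{\max}$ below). Expanding the $\dot{Z}^{s_{c}}_{\alpha}$-norm dyadically, for each output frequency $N_{0}$ the Duhamel formula, Theorem~\ref{duality}, and (\ref{Vproj}) (to localize the test function to frequency $\sim N_{0}$) give
\[
\|P_{N_{0}}I^{(1)}_{T,\alpha}(w,v)\|_{U^{2}_{\alpha}}\lesssim \sup\left|\int_{0}^{T}\int_{\R^{d}}(\nabla\cdot w)\,v\cdot\overline{u_{0}}\,dx\,dt\right|,
\]
the supremum being over $u_{0}$ with $u_{0}=P_{N_{0}}u_{0}$ and $\|u_{0}\|_{V^{2}_{\alpha}}\leq1$. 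Writing $w=\sum_{N_{1}}P_{N_{1}}w$ and $v=\sum_{N_{2}}P_{N_{2}}v$, the $x$-integration forces two of $N_{0},N_{1},N_{2}$ to be comparable and to dominate the third, and the divergence contributes a scalar factor $\lesssim N_{\max}:=\max(N_{0},N_{1},N_{2})$: directly when it falls on a frequency $\sim N_{\max}$, and otherwise --- when it falls on a strictly smaller frequency $N_{1}$ --- after rewriting $\sum_{N_{1}}N_{1}\int(P_{N_{1}}w)(\cdots)=N_{\max}\int\bigl(\sum_{N_{1}}(N_{1}/N_{\max})P_{N_{1}}w\bigr)(\cdots)$, where the rescaled function has $V^{2}_{\gamma}$-norm no larger. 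So everything reduces to weighted dyadic sums of $\bigl|N_{\max}\int_{0}^{T}\int_{\R^{d}}(P_{N_{1}}w)(P_{N_{2}}v)(P_{N_{0}}u_{0})\,dx\,dt\bigr|$.

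The key point is that in this trilinear form $w$ sits in a space adapted to $e^{it\gamma\Delta}$, $v$ to $e^{it\beta\Delta}$, and $\overline{u_{0}}$ to $e^{-it\alpha\Delta}$, so the parameters in Sections~\ref{be_for_2d}--\ref{local_tri_est_2d} are $(\sigma_{1},\sigma_{2},\sigma_{3})=(\gamma,\beta,-\alpha)$, for which
\[
(\sigma_{1}+\sigma_{2})(\sigma_{2}+\sigma_{3})(\sigma_{3}+\sigma_{1})=(\beta+\gamma)(\beta-\alpha)(\gamma-\alpha)=\kappa,\qquad \sigma_{1}\sigma_{2}\sigma_{3}\Bigl(\frac{1}{\sigma_{1}}+\frac{1}{\sigma_{2}}+\frac{1}{\sigma_{3}}\Bigr)=\theta ,
\]
and likewise for (\ref{Duam_be}) and (\ref{Duam_ga}) with the same $\theta$ and with $\pm\kappa$. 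Hence the hypotheses of Theorem~\ref{wellposed_1} are precisely what is needed: when $d\geq4$, $\kappa\neq0$ makes Lemma~\ref{modul_est}(i), and therefore Propositions~\ref{HL_est}, \ref{HH_est} and Corollary~\ref{HH-HL_est}, applicable, while Proposition~\ref{HHH_est}(i) requires no condition; when $d=2,3$, $\theta>0$ (which forces $\kappa\neq0$) makes Lemma~\ref{modul_est}(i) and (ii) available, hence all of Propositions~\ref{HL_est}, \ref{HH_est} and \ref{HHH_est}(ii).

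It remains to split the dyadic sum by frequency configuration and apply these results. When $N_{1}\sim N_{2}\gtrsim N_{0}$ (the low frequency is the test function), Corollary~\ref{HH-HL_est} with $s=s_{c}$ bounds the $N_{0}$-summed contribution, for fixed $N_{1}$, by $N_{1}^{s_{c}}\|P_{N_{1}}w\|_{V^{2}_{\gamma}}\,N_{1}^{s_{c}}\|P_{N_{2}}v\|_{V^{2}_{\beta}}$, which the Cauchy--Schwarz inequality in $N_{1}$ turns into $\|w\|_{\dot{Y}^{s_{c}}_{\gamma}}\|v\|_{\dot{Y}^{s_{c}}_{\beta}}$. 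When $N_{0}\sim N_{\max}$ and the smallest frequency is one of the two inputs, Proposition~\ref{HL_est} applies, with that input carrying the $\dot{H}^{s_{c}}$-weighted $\ell^{2}$-sum (bounded by one of the $\dot{Y}^{s_{c}}$-norms), the other $\dot{Y}^{s_{c}}$-norm arising by summing the high-input factor over $N_{0}$, and the test function contributing $\lesssim1$. When $N_{0}\sim N_{1}\sim N_{2}$, Proposition~\ref{HHH_est} applies. In every case the dyadic summations converge with no loss at $s=s_{c}$, because (\ref{Vbilinear}) supplies the off-diagonal gain $(N_{\min}/N_{\max})^{1/4}$ up to logarithmic factors and $\dot{H}^{s_{c}}(\R^{d})\hookrightarrow L^{d}(\R^{d})$ is scaling invariant; that $I^{(1)}_{T,\alpha}(w,v)\in C(\R;\dot{H}^{s_{c}})\cap U^{2}_{\alpha}$ then follows from Proposition~\ref{upvpprop}.

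The main obstacle is the high-high-high interaction $N_{0}\sim N_{1}\sim N_{2}$. In the other configurations a genuine frequency gap is present, so the needed $L^{2}_{t,x}$-decay of the nonresonant part comes for free from Lemma~\ref{modul_est}(i), which uses only $\kappa\neq0$; but when all three frequencies are comparable one must instead invoke Lemma~\ref{modul_est}(ii), and this is exactly where, for $d=2,3$, the sign condition $\theta>0$ --- that is $\sigma_{1}\sigma_{2}\sigma_{3}(1/\sigma_{1}+1/\sigma_{2}+1/\sigma_{3})>0$ --- is indispensable. For $d\geq4$ no resonance cancellation is required, since the high-high-high contribution closes unconditionally via the Sobolev embedding $\dot{W}^{s_{c}-1,\,6d/(3d-4)}(\R^{d})\hookrightarrow L^{3d/4}(\R^{d})$, the Strichartz norm $L^{3}_{t}L^{6d/(3d-4)}_{x}$, and (\ref{V_Stri}).
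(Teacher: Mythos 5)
Your proposal is correct and follows essentially the same route as the paper: dyadic decomposition of the $\dot{Z}^{s_{c}}$-norm, the $U^{2}$--$V^{2}$ duality of Theorem~\ref{duality} to reduce to trilinear forms, then Proposition~\ref{HL_est} and Corollary~\ref{HH-HL_est} (i.e.\ Propositions~\ref{HH_est} and~\ref{HHH_est}) according to the frequency configuration, finished by Cauchy--Schwarz in the dyadic sums. The paper happens to detail (\ref{Duam_ga}) with $(\sigma_{1},\sigma_{2},\sigma_{3})=(\alpha,-\beta,-\gamma)$ rather than (\ref{Duam_al}), but your identification of the parameters and of how the hypotheses $\kappa\neq 0$ (resp.\ $\theta>0$) feed into the trilinear estimates matches the paper's argument.
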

\begin{proof}
We prove only (\ref{Duam_ga}) since (\ref{Duam_al}) and (\ref{Duam_be}) are proved by the same way. 
We show the estimate
\begin{equation}\label{Nonlin_est}
||I_{T,\gamma}^{(2)}(u,\overline{v})||_{\dot{Z}^{s}_{\gamma}}\lesssim ||u||_{\dot{Y}^{s_{c}}_{\alpha}}||v||_{\dot{Y}^{s}_{\beta}}+||u||_{\dot{Y}^{s}_{\alpha}}||v||_{\dot{Y}^{s_{c}}_{\beta}}
\end{equation}
for $s\geq 0$. (\ref{Duam_ga}) follows from (\ref{Nonlin_est}) as $s=s_{c}$. We put $(u_{1},u_{2}):=(u,\overline{v})$ and $(\sigma_{1},\sigma_{2},\sigma_{3}):=(\alpha ,-\beta ,-\gamma )$. 
To obtain (\ref{Nonlin_est}), we use the argument of the proof of Theorem\ 3.2 in \cite{HHK09}. We define
\[
\begin{split}
J_{1}&:=\left|\left| \sum_{N_{2}}\sum_{N_{1}\ll N_{2}}I_{T,-\sigma_{3}}^{(2)}(P_{N_{1}}u_{1}, P_{N_{2}}u_{2})\right|\right|_{\dot{Z}^{s}_{-\sigma_{3}}},\\
J_{2}&:=\left|\left| \sum_{N_{2}}\sum_{N_{1}\sim N_{2}}I_{T,-\sigma_{3}}^{(2)}(P_{N_{1}}u_{1}, P_{N_{2}}u_{2})\right|\right|_{\dot{Z}^{s}_{-\sigma_{3}}},\\
J_{3}&:=\left|\left| \sum_{N_{1}}\sum_{N_{2}\ll N_{1}}I_{T,-\sigma_{3}}^{(2)}(P_{N_{1}}u_{1}, P_{N_{2}}u_{2})\right|\right|_{\dot{Z}^{s}_{-\sigma_{3}}},
\end{split}
\]
where implicit constants in $\ll$ actually depend on $\sigma_{1}$, $\sigma_{2}$, $\sigma_{3}$. 

First, we prove the estimate for $J_{1}$. By Theorem~\ref{duality}, we have
\[
\begin{split}
J_{1}&\leq \left\{ \sum_{N_{3}}N_{3}^{2s}\left(\sum_{N_{2}\sim N_{3}}\left|\left| e^{it\sigma_{3} \Delta}P_{N_{3}}\sum_{N_{1}\ll N_{2}}I_{T,-\sigma_{3}}^{(2)}(P_{N_{1}}u_{1}, P_{N_{2}}u_{2})\right|\right|_{U^{2}}\right)^{2}\right\}^{1/2}\\
&=\left\{\sum_{N_{3}}N_{3}^{2s}
\left( \sum_{N_{2}\sim N_{3}}\sup_{||u_{3}||_{V^{2}_{\sigma_{3}}}=1}\left|\sum_{N_{1}\ll N_{2}}N_{3}\int_{0}^{T}\int_{\R^{d}}(P_{N_{1}}u_{1})(P_{N_{2}}u_{2})(P_{N_{3}}u_{3})dxdt\right|\right)^{2}\right\}^{1/2}. 
\end{split}
\]
Therefore by Proposition~\ref{HL_est}, we have
\[
\begin{split}
J_{1}&\lesssim 
\left\{\sum_{N_{3}}N_{3}^{2s}
\left(\sum_{N_{2}\sim N_{3}}\sup_{||u_{3}||_{V^{2}_{\sigma_{3}}}=1}\left(\sum_{N_{1}\ll N_{2}}N_{1}^{2s_{c}}||P_{N_{1}}u_{1}||_{V^{2}_{\sigma_{1}}}^{2}\right)^{1/2}||P_{N_{2}}u_{2}||_{V^{2}_{\sigma_{2}}}||P_{N_{3}}u_{3}||_{V^{2}_{\sigma_{3}}}\right)^{2} 
\right\}^{1/2}\\
&\lesssim 
\left(\sum_{N_{1}}N_{1}^{2s_{c}}||P_{N_{1}}u_{1}||_{V^{2}_{\sigma_{1}}}^{2}\right)^{1/2}
\left(\sum_{N_{2}}N_{2}^{2s}||P_{N_{2}}u_{2}||_{V^{2}_{\sigma_{2}}}^{2}\right)^{1/2}\\
&= ||u_{1}||_{\dot{Y}^{s_{c}}_{\sigma_{1}}}||u_{2}||_{\dot{Y}^{s}_{\sigma_{2}}}.
\end{split}
\]

Second, we prove the estimate for $J_{2}$. By Theorem~\ref{duality}, we have
\[
\begin{split}
J_{2}&\leq
\sum_{N_{2}}\sum_{N_{1}\sim N_{2}}\left(\sum_{N_{3}\lesssim N_{2}}N_{3}^{2s}||e^{it\sigma_{3} \Delta}P_{N_{3}}I_{T,-\sigma_{3}}^{(2)}(P_{N_{1}}u_{1}, P_{N_{2}}u_{2})||_{U^{2}}^{2}\right)^{1/2}\\
&=\sum_{N_{2}}\sum_{N_{1}\sim N_{2}}\left(\sum_{N_{3}\lesssim N_{2}}N_{3}^{2s}
\sup_{||u_{3}||_{V^{2}_{\sigma_{3}}}=1}\left| \int_{0}^{T}\int_{\R^{d}}(P_{N_{1}}u_{1})(P_{N_{2}}u_{2})(P_{N_{3}}u_{3})dxdt\right|^{2}\right)^{1/2}.
\end{split}
\]
Therefore by {\rm Corollary~\ref{HH-HL_est}} and Cauchy-Schwarz inequality for dyadic sum, we have
\[
\begin{split}
J_{2}&\lesssim
\sum_{N_{2}}\sum_{N_{1}\sim N_{2}}N_{1}^{s_{c}}||P_{N_{1}}u_{1}||_{V^{2}_{\sigma_{1}}}N_{2}^{s}||P_{N_{2}}u_{2}||_{V^{2}_{\sigma_{2}}}\\
&\lesssim \left(\sum_{N_{1}}N_{1}^{2s_{c}}||P_{N_{1}}u_{1}||_{V^{2}_{\sigma_{1}}}^{2}\right)^{1/2}
\left(\sum_{N_{2}}N_{2}^{2s}||P_{N_{2}}u_{2}||_{V^{2}_{\sigma_{2}}}^{2}\right)^{1/2}\\
&= ||u_{1}||_{\dot{Y}^{s_{c}}_{\sigma_{1}}}||u_{2}||_{\dot{Y}^{s}_{\sigma_{2}}}.
\end{split}
\]

Finally, we prove the estimate for $J_{3}$. By the same manner as for $J_{1}$, we have
\[
J_{3}\lesssim ||u_{1}||_{\dot{Y}^{s}_{\sigma_{1}}}||u_{2}||_{\dot{Y}^{s_{c}}_{\sigma_{2}}}. 
\]

Therefore, we obtain (\ref{Nonlin_est}) since $||u_{1}||_{\dot{Y}^{s}_{\sigma_{1}}}=||u||_{\dot{Y}^{s}_{\alpha}}$ and $||u_{2}||_{\dot{Y}^{s_{c}}_{\sigma_{2}}}=||v||_{\dot{Y}^{s_{c}}_{\beta}}$. 
\end{proof}
\begin{cor}\label{Duam_est_inhom}
We assume that $\alpha$, $\beta$, $\gamma \in \R\backslash \{0\}$ satisfy the condition in {\rm Theorem~\ref{wellposed_1}}. 
Then for $s\geq s_{c}$ $(=d/2-1)$ and any $0<T\leq \infty$, we have
\begin{align}
&||I_{T,\alpha}^{(1)}(w,v)||_{Z^{s}_{\alpha}}\lesssim ||w||_{Y^{s}_{\gamma}}||v||_{Y^{s}_{\beta}},\label{Duam_al_inhom}\\
&||I_{T,\beta}^{(1)}(\overline{w},u)||_{Z^{s}_{\beta}}\lesssim ||w||_{Y^{s}_{\gamma}}||u||_{Y^{s}_{\alpha}},\label{Duam_be_inhom}\\
&||I_{T,\gamma}^{(2)}(u,\overline{v})||_{Z^{s}_{\gamma}}\lesssim ||u||_{Y^{s}_{\alpha}}||v||_{Y^{s}_{\beta}}.\label{Duam_ga_inhom}
\end{align}
\end{cor}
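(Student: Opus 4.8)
The plan is to derive Corollary~\ref{Duam_est_inhom} directly from the estimate (\ref{Nonlin_est}) established inside the proof of Proposition~\ref{Duam_est}. That estimate is valid for every $s\geq 0$ and reads
\[
\|I_{T,\gamma}^{(2)}(u,\overline{v})\|_{\dot{Z}^{s}_{\gamma}}\lesssim \|u\|_{\dot{Y}^{s_{c}}_{\alpha}}\|v\|_{\dot{Y}^{s}_{\beta}}+\|u\|_{\dot{Y}^{s}_{\alpha}}\|v\|_{\dot{Y}^{s_{c}}_{\beta}},
\]
together with the obvious $I^{(1)}$-analogues for $\|I_{T,\alpha}^{(1)}(w,v)\|_{\dot{Z}^{s}_{\alpha}}$ and $\|I_{T,\beta}^{(1)}(\overline{w},u)\|_{\dot{Z}^{s}_{\beta}}$. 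First I would note that the hypotheses of Theorem~\ref{wellposed_1}, which are assumed here, concern only $d\geq 2$, so $s_{c}=d/2-1\geq 0$, and the corollary assumes $s\geq s_{c}$; hence $0\leq s_{c}\leq s$.

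The one elementary ingredient is the dyadic inequality $N^{2s_{c}}\leq 1+N^{2s}$, which holds for every dyadic $N$ as soon as $0\leq s_{c}\leq s$ (treat $N\leq 1$ and $N\geq 1$ separately). Multiplying by $\|P_{N}u\|_{V^{2}_{\sigma}}^{2}$, summing over $N$, and comparing with Definition~\ref{YZ_space} yields $\|u\|_{\dot{Y}^{s_{c}}_{\sigma}}\lesssim \|u\|_{Y^{s}_{\sigma}}$, while trivially $\|u\|_{\dot{Y}^{0}_{\sigma}}\leq \|u\|_{Y^{s}_{\sigma}}$ and $\|u\|_{\dot{Y}^{s}_{\sigma}}\leq \|u\|_{Y^{s}_{\sigma}}$.

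I would then apply (\ref{Nonlin_est}) once with exponent $0$ and once with the given exponent $s$, add the two inequalities, and recognize the left-hand side as $\|I_{T,\gamma}^{(2)}(u,\overline{v})\|_{\dot{Z}^{0}_{\gamma}}+\|I_{T,\gamma}^{(2)}(u,\overline{v})\|_{\dot{Z}^{s}_{\gamma}}=\|I_{T,\gamma}^{(2)}(u,\overline{v})\|_{Z^{s}_{\gamma}}$ by Definition~\ref{YZ_space}~(ii). The right-hand side is bounded by $\|u\|_{\dot{Y}^{s_{c}}_{\alpha}}\big(\|v\|_{\dot{Y}^{0}_{\beta}}+\|v\|_{\dot{Y}^{s}_{\beta}}\big)+\big(\|u\|_{\dot{Y}^{0}_{\alpha}}+\|u\|_{\dot{Y}^{s}_{\alpha}}\big)\|v\|_{\dot{Y}^{s_{c}}_{\beta}}$, and the previous paragraph lets me replace every homogeneous norm by the corresponding $Y^{s}$ norm, giving (\ref{Duam_ga_inhom}). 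The estimates (\ref{Duam_al_inhom}) and (\ref{Duam_be_inhom}) follow identically from the $I^{(1)}$-analogues. That $I_{T,\gamma}^{(2)}(u,\overline{v})$ lies in $C(\R;H^{s})\cap U^{2}_{\gamma}$, as required by the space $Z^{s}_{\gamma}$, is inherited from the corresponding membership statements at regularities $0$ and $s$ already used in Proposition~\ref{Duam_est}.

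I do not expect a genuine obstacle. The only points deserving a moment's care are to confirm that the proof of (\ref{Nonlin_est}) is uniform in $s\in[0,\infty)$ — it is, because the trilinear estimates invoked there (Proposition~\ref{HL_est} and Corollary~\ref{HH-HL_est}) place the regularity weight solely on the high-frequency output factor — and to notice that the admissibility condition on $(\alpha,\beta,\gamma)$ is exactly the one in Proposition~\ref{Duam_est}, so no further case analysis on the coefficients is needed.
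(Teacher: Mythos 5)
Your proposal is correct and follows essentially the same route as the paper: apply the scale-invariant estimate (\ref{Nonlin_est}) at regularities $0$ and $s$, then absorb the resulting $\dot{Y}^{s_{c}}$ norms into $Y^{s}=\dot{Y}^{0}\cap\dot{Y}^{s}$ using $0\le s_{c}\le s$. The paper phrases this last absorption via the decomposition $u=P_{0}u+(Id-P_{0})u$, which is just the low/high-frequency form of your dyadic inequality $N^{2s_{c}}\le 1+N^{2s}$.
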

\begin{proof}
We prove only (\ref{Duam_ga_inhom}) since (\ref{Duam_al_inhom}) and (\ref{Duam_be_inhom}) are proved by the same way. 
By (\ref{Nonlin_est}), we have
\[
\begin{split}
||I_{T,\gamma}^{(2)}(u,\overline{v})||_{Z^{s}_{\gamma}}
&=||I_{T,\gamma}^{(2)}(u,\overline{v})||_{\dot{Z}^{0}_{\gamma}}+||I_{T,\gamma}^{(2)}(u,\overline{v})||_{\dot{Z}^{s}_{\gamma}}\\
&\lesssim ||u||_{\dot{Y}^{s_{c}}_{\alpha}}||v||_{\dot{Y}^{0}_{\beta}}+||u||_{\dot{Y}^{0}_{\alpha}}||v||_{\dot{Y}^{s_{c}}_{\beta}}+||u||_{\dot{Y}^{s_{c}}_{\alpha}}||v||_{\dot{Y}^{s}_{\beta}}+||u||_{\dot{Y}^{s}_{\alpha}}||v||_{\dot{Y}^{s_{c}}_{\beta}}.
\end{split}
\]
We decompose $u=P_{0}u+(Id-P_{0})u$ and $v=P_{0}v+(Id-P_{0})v$. Since
\[
\begin{split}
&||P_{0}u||_{\dot{Y}^{s_{c}}_{\alpha}}\lesssim ||P_{0}u||_{\dot{Y}^{0}_{\alpha}},\ ||(Id-P_{0})u||_{\dot{Y}^{s_{c}}_{\alpha}}\lesssim ||(Id-P_{0})u||_{\dot{Y}^{s}_{\alpha}},\\
&||P_{0}v||_{\dot{Y}^{s_{c}}_{\beta}}\lesssim ||P_{0}v||_{\dot{Y}^{0}_{\beta}},\ ||(Id-P_{0})v||_{\dot{Y}^{s_{c}}_{\beta}}\lesssim ||(Id-P_{0})v||_{\dot{Y}^{s}_{\beta}}
\end{split}
\]
for $s\geq s_{c}$, we obtain (\ref{Duam_ga_inhom}). 
\end{proof}
\begin{prop}\label{Duam_est_inhom_sub}\kuuhaku \\
{\rm (i)}\ Let $d\geq 2$. We assume that $\alpha$, $\beta$, $\gamma \in \R\backslash \{0\}$ and $s\in \R$ 
satisfy the condition in {\rm Theorem~\ref{wellposed_2}}. 
Then there exists $\delta >0$ such that for any $0<T< \infty$, we have
\begin{align}
&||I_{T,\alpha}^{(1)}(w,v)||_{Z^{s}_{\alpha}}\lesssim T^{\delta}||w||_{Z^{s}_{\gamma}}||v||_{Z^{s}_{\beta}},\label{Duam_al_inhom_sub}\\
&||I_{T,\beta}^{(1)}(\overline{w},u)||_{Z^{s}_{\beta}}\lesssim T^{\delta}||w||_{Z^{s}_{\gamma}}||u||_{Z^{s}_{\alpha}},\label{Duam_be_inhom_sub}\\
&||I_{T,\gamma}^{(2)}(u,\overline{v})||_{Z^{s}_{\gamma}}\lesssim T^{\delta}||u||_{Z^{s}_{\alpha}}||v||_{Z^{s}_{\beta}}.\label{Duam_ga_inhom_sub}
\end{align}
{\rm (ii)}\ Let $d=1$. We assume that $\alpha$, $\beta$, $\gamma \in \R\backslash \{0\}$ and $s\in \R$ 
satisfy the condition in {\rm Theorem~\ref{wellposed_2}} except the case $1>s\geq 1/2$, $\theta <0$ and $(\alpha -\gamma )(\beta +\gamma ) \neq 0$. Then we have {\rm (\ref{Duam_al_inhom_sub})--(\ref{Duam_ga_inhom_sub})} with $T=1$. 
\end{prop}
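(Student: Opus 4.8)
The plan is to rerun the argument of Proposition~\ref{Duam_est} and Corollary~\ref{Duam_est_inhom}, replacing every application of a time-global trilinear estimate from Section~\ref{global_tri_est_2d} by its time-local analogue from Section~\ref{local_tri_est_2d}; each such replacement produces the gain $T^{\delta}$. By symmetry and the same reasoning as in Proposition~\ref{Duam_est} (the placement of the derivative in $I^{(1)}$ versus $I^{(2)}$ being immaterial) it suffices to prove (\ref{Duam_ga_inhom_sub}). Putting $(u_{1},u_{2}):=(u,\overline{v})$ and $(\sigma_{1},\sigma_{2},\sigma_{3}):=(\alpha,-\beta,-\gamma)$ one has $\sigma_{1}+\sigma_{2}=\alpha-\beta$, $\sigma_{2}+\sigma_{3}=-(\beta+\gamma)$, $\sigma_{3}+\sigma_{1}=\alpha-\gamma$, hence $(\sigma_{1}+\sigma_{2})(\sigma_{2}+\sigma_{3})(\sigma_{3}+\sigma_{1})=-\kappa$ and $\sigma_{1}\sigma_{2}\sigma_{3}(1/\sigma_{1}+1/\sigma_{2}+1/\sigma_{3})=\theta$, so the hypotheses of Theorem~\ref{wellposed_2} become exactly the hypotheses under which the estimates of Section~\ref{local_tri_est_2d} are available. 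Since those estimates already carry the weight $(N_{j}\vee 1)^{s}$ on every frequency factor, no separate homogeneous/inhomogeneous bookkeeping is needed (or one uses the decomposition into $P_{0}$ and $Id-P_{0}$ of Corollary~\ref{Duam_est_inhom}); the resulting $V^{2}$-norms are turned into $U^{2}$-norms via $\|P_{N}u\|_{V^{2}_{\sigma}}\lesssim\|P_{N}u\|_{U^{2}_{\sigma}}$, so that the right-hand side is bounded by $\|u\|_{Z^{s}_{\alpha}}\|v\|_{Z^{s}_{\beta}}$.

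As in Proposition~\ref{Duam_est} I split $I^{(2)}_{T,-\sigma_{3}}(u,\overline{v})=\sum I^{(2)}_{T,-\sigma_{3}}(P_{N_{1}}u_{1},P_{N_{2}}u_{2})$ into the low--high regime $J_{1}$ ($N_{1}\ll N_{2}$, output frequency $N_{3}\sim N_{2}$), the high--high regime $J_{2}$ ($N_{1}\sim N_{2}$, $N_{3}\lesssim N_{2}$) and the high--low regime $J_{3}$ ($N_{2}\ll N_{1}$, $N_{3}\sim N_{1}$). In each regime I apply Theorem~\ref{duality} to write the $\dot{Z}^{s}_{-\sigma_{3}}$-norm as a supremum over $\|u_{3}\|_{V^{2}_{\sigma_{3}}}=1$ of a trilinear space-time integral, the outer $\nabla$ in $I^{(2)}$ contributing a factor $N_{3}\le N_{\max}$, and then invoke Proposition~\ref{HL_est_sub} for $J_{1}$ and $J_{3}$ and Corollary~\ref{HH-HL_est_sub} for $J_{2}$, finishing the dyadic summations by Cauchy--Schwarz exactly as in Proposition~\ref{Duam_est}. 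For $d\geq 4$ this runs verbatim as long as $\kappa\neq 0$; when only $(\alpha-\gamma)(\beta+\gamma)\neq 0$ holds, the case $\alpha=\beta$ (i.e. $\sigma_{1}+\sigma_{2}=0$) makes the cancellation of Lemma~\ref{modul_est}(i) fail precisely in the sub-regime $N_{3}\ll N_{1}\sim N_{2}$ of $J_{2}$, where Corollary~\ref{HL_HH_est_sub2}(ii) is used instead, while the diagonal part $N_{3}\sim N_{1}\sim N_{2}$ of $J_{2}$ is always controlled by Proposition~\ref{HHH_est_sub}(i), which needs no condition on the $\sigma_{j}$.

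For $d=2,3$ the same scheme applies, with the diagonal contribution $N_{1}\sim N_{2}\sim N_{3}$ handled by Proposition~\ref{HHH_est_sub}(iii) when $\theta>0$ (so that Lemma~\ref{modul_est}(ii) is available) and by Proposition~\ref{HHH_est_sub}(ii) when $\theta\leq 0$ and $\kappa\neq 0$ (which forces $s\geq 1$); the off-diagonal parts still go through Proposition~\ref{HL_est_sub} and Corollary~\ref{HH-HL_est_sub}, since $\theta>0$ or $\kappa\neq 0$ makes all three $\sigma_{i}+\sigma_{j}$ nonzero, and in the remaining case $\alpha=\beta$ one falls back on Corollary~\ref{HL_HH_est_sub2} (hence $s>1$). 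For $d=1$ the argument is identical, except that the bilinear Strichartz inputs behind Proposition~\ref{HL_est_sub}, Corollary~\ref{HH-HL_est_sub} and Corollary~\ref{HL_HH_est_sub2} are available only on $[0,1]$ (cf. Corollary~\ref{UVbilinear}(ii)), so all the time-local estimates --- and hence (\ref{Duam_al_inhom_sub})--(\ref{Duam_ga_inhom_sub}) --- come out with $T=1$; the diagonal term uses Proposition~\ref{HHH_est_sub}(iii) when $\theta>0$ (valid for $s\geq 0$) and Proposition~\ref{HHH_est_sub}(ii) when $\theta\leq 0$ (valid only for $s\geq 1$), which is exactly why the range $1>s\geq 1/2$, $\theta<0$ is excluded from part (ii) and must be treated separately with the classical Bourgain spaces in Appendix~\ref{appendix}.

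The main obstacle is obtaining a \emph{strictly positive} power of $T$ in every interaction. In the high-modulation pieces this is done by replacing $L^{2}_{t}$ by $L^{2/(1-2\delta)}_{t}$ and the endpoint Strichartz exponents by non-endpoint ones, which costs roughly $2\delta$ derivatives and hence works only for $s>s_{c}$ with $\delta<(s-s_{c})/2$. This is why the borderline regularities ($s=1$ when $\theta\leq 0$ or $\alpha=\beta$, $s=1/2$ when $d=1$ and $\theta<0$) occur in the statement: there the $\delta$-loss cannot be absorbed and one must instead exploit the sharper resonance identity (Lemma~\ref{modul_est}) or the improved one-dimensional bilinear estimate (\ref{Vbilinear_1d}), and in the genuinely critical one-dimensional case the $U^{2}$/$V^{2}$ method does not reach the endpoint at all. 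The only further subtlety is bookkeeping: in each of the three regimes and for each of the three Duhamel terms one must track which of the factors $\sigma_{i}+\sigma_{j}$ is allowed to vanish, so that either Lemma~\ref{modul_est} or Corollary~\ref{HL_HH_est_sub2} always applies.
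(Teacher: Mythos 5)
Your proposal is correct and follows essentially the same route as the paper, whose proof of this proposition is a one-line instruction to rerun Proposition~\ref{Duam_est} with Proposition~\ref{HL_est_sub} and Corollary~\ref{HH-HL_est_sub} in place of their time-global counterparts when $\alpha\neq\beta$, and with Corollary~\ref{HL_HH_est_sub2} when $d\geq 2$ and $\alpha=\beta$. Your additional bookkeeping — the identification $(\sigma_{1}+\sigma_{2})(\sigma_{2}+\sigma_{3})(\sigma_{3}+\sigma_{1})=-\kappa$ and $\sigma_{1}\sigma_{2}\sigma_{3}(1/\sigma_{1}+1/\sigma_{2}+1/\sigma_{3})=\theta$, the localization of the failure of Lemma~\ref{modul_est}(i) to the single sub-regime where one pairwise sum vanishes, and the $T=1$ restriction for $d=1$ — accurately fills in what the paper leaves implicit.
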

\begin{proof}
We obtain (\ref{Duam_al_inhom_sub})--(\ref{Duam_ga_inhom_sub}) by using Proposition~\ref{HL_est_sub} and Corollary~\ref{HH-HL_est_sub} 
if $\alpha \neq \beta$, using Corollary~\ref{HL_HH_est_sub2} if $d\geq 2$ and $\alpha =\beta$ instead of Proposition~\ref{HL_est} and Corollary~\ref{HH-HL_est} in the proof of Proposition~\ref{Duam_est}. 
\end{proof}
\begin{proof}[\rm{\bf{Proof of Theorem~\ref{wellposed_1}.}}]
We prove only the homogeneous case. The inhomogeneous case is also proved by the same way. 
For $s\in \R$ and interval $I\subset \R$, we define
\begin{equation}\label{Xs_norm}
\dot{X}^{s}(I):=\dot{Z}^{s}_{\alpha}(I)\times \dot{Z}^{s}_{\beta}(I)\times \dot{Z}^{s}_{\gamma}(I). 
\end{equation}
Furthermore for $r>0$, we define 
\begin{equation}\label{Xrs_norm}
\dot{X}^{s}_{r}(I)
:=\left\{(u,v,w)\in \dot{X}^{s}(I)\left|\ ||u||_{\dot{Z}_{\alpha}^{s}(I)},\ ||v||_{\dot{Z}_{\beta}^{s}(I)},\ ||w||_{\dot{Z}_{\gamma}^{s}(I)}\leq 2r \right.\right\}
\end{equation}
which is a closed subset of $\dot{X}^{s}(I)$. 
Let $(u_{0}$, $v_{0}$, $w_{0})\in B_{r}(\dot{H}^{s_{c}}\times \dot{H}^{s_{c}}\times \dot{H}^{s_{c}})$ be given. For $(u,v,w)\in \dot{X}^{s_{c}}_{r}([0,\infty ))$, 
we have
\[
\begin{split}
||\Phi^{(1)}_{T,\alpha ,u_{0}}(w, v)||_{\dot{Z}_{\alpha}^{s_{c}}([0,\infty ))}&\leq ||u_{0}||_{\dot{H}^{s_{c}}} +C||w||_{\dot{Z}_{\gamma}^{s_{c}}([0,\infty ))}||v||_{\dot{Z}_{\beta}^{s_{c}}([0,\infty ))}\leq r(1+4Cr),\\
||\Phi^{(1)}_{T,\beta ,v_{0}}(\overline{w}, u)||_{\dot{Z}_{\beta}^{s_{c}}([0,\infty ))}&\leq ||v_{0}||_{\dot{H}^{s_{c}}} +C||w||_{\dot{Z}_{\gamma}^{s_{c}}([0,\infty ))}||u||_{\dot{Z}_{\alpha}^{s_{c}}([0,\infty ))}\leq r(1+4Cr),\\
||\Phi^{(2)}_{T,\gamma ,w_{0}}(u, \overline{v})||_{\dot{Z}_{\gamma}^{s_{c}}([0,\infty ))}&\leq ||w_{0}||_{\dot{H}^{s_{c}}} +C||u||_{\dot{Z}_{\alpha}^{s_{c}}([0,\infty ))}||v||_{\dot{Z}_{\beta}^{s_{c}}([0,\infty ))}\leq r(1+4Cr)
\end{split}
\]
and
\[
\begin{split}
||\Phi^{(1)}_{T,\alpha ,u_{0}}(w_{1}, v_{1})-\Phi^{(1)}_{T,\alpha ,u_{0}}(w_{2}, v_{2})||_{\dot{Z}_{\alpha}^{s_{c}}([0,\infty ))}
&\leq 2Cr\left( ||w_{1}-w_{2}||_{\dot{Z}_{\gamma}^{s_{c}}([0,\infty ))}+||v_{1}-v_{2}||_{\dot{Z}_{\beta}^{s_{c}}([0,\infty ))}\right),\\
||\Phi^{(1)}_{T,\beta ,v_{0}}(\overline{w_{1}}, u_{1})-\Phi^{(1)}_{T,\beta ,v_{0}}(\overline{w_{2}}, u_{2})||_{\dot{Z}_{\beta}^{s_{c}}([0,\infty ))}
&\leq 2Cr\left( ||w_{1}-w_{2}||_{\dot{Z}_{\gamma}^{s_{c}}([0,\infty ))}+||u_{1}-u_{2}||_{\dot{Z}_{\alpha}^{s_{c}}([0,\infty ))}\right),\\
||\Phi^{(2)}_{T,\gamma ,w_{0}}(u_{1}, \overline{v_{1}})-\Phi^{(1)}_{T,\gamma ,w_{0}}(u_{2}, \overline{v_{2}})||_{\dot{Z}_{\gamma}^{s_{c}}([0,\infty ))}
&\leq 2Cr\left( ||u_{1}-u_{2}||_{\dot{Z}_{\alpha}^{s_{c}}([0,\infty ))}+||v_{1}-v_{2}||_{\dot{Z}_{\beta}^{s_{c}}([0,\infty ))}\right)
\end{split}
\]
by Proposition~\ref{Duam_est} and
\[
||e^{i\sigma t\Delta}\varphi ||_{\dot{Z}^{s_{c}}_{\sigma}([0,\infty ))}\leq ||\ee_{[0,\infty )}e^{i\sigma t\Delta}\varphi ||_{\dot{Z}^{s_{c}}_{\sigma}}\leq ||\varphi ||_{\dot{H}^{s_{c}}}, 
\] 
where $C$ is an implicit constant in (\ref{Duam_al})--(\ref{Duam_ga}). Therefore if we choose $r$ satisfying
\[
r <(4C)^{-1},
\]
then $\Phi$ is a contraction map on $\dot{X}^{s_{c}}_{r}([0,\infty ))$. 
This implies the existence of the solution of the system (\ref{NLS_sys}) and the uniqueness in the ball $\dot{X}^{s_{c}}_{r}([0,\infty ))$. 
The Lipschitz continuously of the flow map is also proved by similar argument. 
\end{proof} 
Theorem~\ref{wellposed_2} except the case $d=1$, $1>s\geq 1/2$, $\theta <0$ and $(\alpha -\gamma )(\beta +\gamma ) \neq 0$ is proved by the same way for the proof of Theorem~\ref{wellposed_1}. 
\begin{rem}
For $d=1$ and $s>s_{c}$ (in particular $s\geq 0$), we can assume the $H^{s}$-norm of the initial data is small enough 
by the scaling {\rm (\ref{scaling_tr})} with large $\lambda$ since $s_{c}<0$. 
\end{rem}
\begin{proof}[\rm{\bf{Proof of Corollary~\ref{sccat}.}}]
We prove only the homogeneous case. The inhomogeneous case is also proved by the same way. 
By Proposition~\ref{Duam_est}, 
the global solution $(u,v,w)\in \dot{X}^{s_{c}}([0,\infty ))$ of (\ref{NLS_sys}) which was constructed in Theorem~\ref{wellposed_1} 
satisfies
\[
N^{s_{c}}(e^{-it\alpha \Delta}P_{N}I_{\infty , \alpha}^{(1)}(w,v),\ e^{-it\beta \Delta}P_{N}I_{\infty , \beta}(\overline{w},u),\ e^{-it\gamma \Delta}P_{N}I_{\infty,\gamma}^{(2)}(u,\overline{v}))\in V^{2}\times V^{2}\times V^{2}
\]
for each $N\in 2^{\Z}$. This implies that
\[
(u_{+}, v_{+}, w_{+}):=\lim_{t\rightarrow \infty}(u_{0}-e^{-it\alpha \Delta}I_{\infty , \alpha}^{(1)}(w,v),\ v_{0}-e^{-it\beta \Delta}I_{\infty , \beta}(\overline{w},u),\ w_{0}+e^{-it\gamma \Delta}I_{\infty,\gamma}^{(2)}(u,\overline{v}))
\]
exists in $\dot{H}^{s_{c}}\times \dot{H}^{s_{c}}\times \dot{H}^{s_{c}}$ by Proposition~\ref{upvpprop}\ {\rm (4)}. 
Then we obtain
\[
(u,v,w)-(e^{it\alpha \Delta}u_{+}, e^{it\beta \Delta}v_{+}, e^{it\gamma \Delta}w_{+})\rightarrow 0
\]
in $\dot{H}^{s_{c}}\times \dot{H}^{s_{c}}\times \dot{H}^{s_{c}}$ as $t\rightarrow \infty$. 
\end{proof}
Theorem~\ref{ddqdnls_wp} is proved by using the estimate (\ref{Duam_al}) and (\ref{Duam_al_inhom}) for 
$(\alpha, \beta, \gamma ) =(-1,1,1)$.  
%
%
\section{A priori estimates \label{proof_apriori}}
In this section, we prove Theorem~\ref{global_extend}. We define
\[
\begin{split}
M(u,v,w)&:=2||u||_{L^{2}_{x}}^{2}+||v||_{L^{2}_{x}}^{2}+||w||_{L^{2}_{x}}^{2}\\
H(u,v,w)&:=\alpha ||\nabla u||_{L^{2}_{x}}^{2}+\beta ||\nabla v||_{L^{2}_{x}}^{2}+\gamma ||\nabla w ||_{L^{2}_{x}}^{2}+2{\rm Re}(w ,\nabla (u\cdot \overline{v}))_{L^{2}_{x}}
\end{split}
\]
and put $M_{0}:=M(u_{0}, v_{0}, w_{0})$, $H_{0}:=H(u_{0}, v_{0}, w_{0})$.
\begin{prop}\label{conservation}
For the smooth solution $(u, v, w)$ of the system\ {\rm{(\ref{NLS_sys})}}, we have
\[
M(u,v,w)=M_{0},\ H(u,v,w)=H_{0}
\]
\end{prop}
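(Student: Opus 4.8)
The plan is to run the standard conserved-quantity computation for Schr\"odinger-type systems, working first with solutions that are Schwartz in $x$ for each $t$, so that differentiation under the integral is legitimate and every integration by parts below has vanishing boundary term; the identities then propagate to the solution classes of Theorems~\ref{wellposed_1} and ~\ref{wellposed_2} by the usual approximation argument. Throughout, for $\C^{d}$-valued $f,g$ I use $(f,g)_{L^{2}_{x}}=\int f\cdot\overline{g}\,dx$ and $|f|^{2}=f\cdot\overline{f}$.

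\textbf{Conservation of $M$.} First I would take the $L^{2}_{x}$ pairing of the first equation in (\ref{NLS_sys}) with $u$, of the second with $v$, and of the third with $w$, and pass to imaginary parts. On the left-hand sides $\mathrm{Im}\,(i\partial_{t}f,f)_{L^{2}_{x}}=\tfrac12\tfrac{d}{dt}\|f\|_{L^{2}_{x}}^{2}$, while the dispersive contributions drop out because $(\sigma\Delta f,f)_{L^{2}_{x}}=-\sigma\|\nabla f\|_{L^{2}_{x}}^{2}$ is real. Weighting the three resulting identities by $2,1,1$ and adding gives
\[
\tfrac12\tfrac{d}{dt}M(u,v,w)=-2\,{\rm Im}\!\int_{\R^{d}}(\nabla\cdot w)(v\cdot\overline{u})\,dx-{\rm Im}\!\int_{\R^{d}}(\nabla\cdot\overline{w})(u\cdot\overline{v})\,dx+{\rm Im}\!\int_{\R^{d}}\nabla(u\cdot\overline{v})\cdot\overline{w}\,dx .
\]
Integrating by parts in the last integral moves $\nabla$ onto $\overline{w}$ and turns it into $-{\rm Im}\int(\nabla\cdot\overline{w})(u\cdot\overline{v})\,dx$; since $\overline{\int(\nabla\cdot w)(v\cdot\overline{u})\,dx}=\int(\nabla\cdot\overline{w})(u\cdot\overline{v})\,dx$, the terms cancel in conjugate pairs, so $\tfrac{d}{dt}M\equiv0$.

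\textbf{Conservation of $H$.} For the energy the efficient route, which I would adopt, is to observe that (\ref{NLS_sys}) is the Hamiltonian system generated by $H$. Rewriting the coupling term by integration by parts, $2\,{\rm Re}(w,\nabla(u\cdot\overline{v}))_{L^{2}_{x}}=-2\,{\rm Re}\int_{\R^{d}}(\nabla\cdot w)(v\cdot\overline{u})\,dx$, and computing the Wirtinger functional derivatives of $H$, one checks
\[
\frac{\delta H}{\delta\overline{u}}=-\alpha\Delta u-(\nabla\cdot w)v,\qquad\frac{\delta H}{\delta\overline{v}}=-\beta\Delta v-(\nabla\cdot\overline{w})u,\qquad\frac{\delta H}{\delta\overline{w}}=-\gamma\Delta w+\nabla(u\cdot\overline{v}),
\]
so that (\ref{NLS_sys}) reads precisely $i\partial_{t}u=\delta H/\delta\overline{u}$, $i\partial_{t}v=\delta H/\delta\overline{v}$, $i\partial_{t}w=\delta H/\delta\overline{w}$. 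Since $H$ is real-valued, $\delta H/\delta u=\overline{\delta H/\delta\overline{u}}$ and likewise for $v,w$, whence by the chain rule
\[
\frac{d}{dt}H=2\,{\rm Re}\!\int_{\R^{d}}\Big(\frac{\delta H}{\delta\overline{u}}\cdot\overline{\partial_{t}u}+\frac{\delta H}{\delta\overline{v}}\cdot\overline{\partial_{t}v}+\frac{\delta H}{\delta\overline{w}}\cdot\overline{\partial_{t}w}\Big)dx=2\,{\rm Re}\!\int_{\R^{d}}i\big(|\partial_{t}u|^{2}+|\partial_{t}v|^{2}+|\partial_{t}w|^{2}\big)dx=0 .
\]
Alternatively one can differentiate $H$ in time by brute force, insert $\partial_{t}u,\partial_{t}v,\partial_{t}w$ from (\ref{NLS_sys}), integrate by parts so that each Laplacian becomes an $L^{2}_{x}$ pairing, discard the purely dispersive cross terms (which are imaginary after pairing), and check that the nonlinear terms produced by $\tfrac{d}{dt}(\alpha\|\nabla u\|_{L^{2}_{x}}^{2}+\beta\|\nabla v\|_{L^{2}_{x}}^{2}+\gamma\|\nabla w\|_{L^{2}_{x}}^{2})$ cancel against those from $\tfrac{d}{dt}[2\,{\rm Re}(w,\nabla(u\cdot\overline{v}))_{L^{2}_{x}}]$.

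\textbf{Main obstacle.} The only delicate step is the algebra in the energy identity: in the brute-force approach the nonlinear terms are numerous, and seeing their cancellation requires the divergence structure of the nonlinearity together with the conjugation identity $\overline{(\nabla\cdot w)(v\cdot\overline{u})}=(\nabla\cdot\overline{w})(u\cdot\overline{v})$ and repeated integration by parts. Recognizing that $H$ is exactly the Hamiltonian is what organizes (and automates) this cancellation, so the real content of the proof is the verification of the three functional-derivative formulas displayed above, the rest being the routine justification of differentiating under the integral for smooth solutions and of the absence of boundary terms.
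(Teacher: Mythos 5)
Your proposal is correct, and for the mass $M$ it coincides with the paper's argument: the paper forms $\mathrm{Im}\int\{(-2u\times\overline{(\ref{ssu})})+(\overline{v}\times(\ref{ssv}))+(\overline{w}\times(\ref{ssw}))\}\,dx$, which is exactly your weighted $(2,1,1)$ pairing followed by taking imaginary parts; the cancellation in conjugate pairs after one integration by parts is the whole content, and your computation of it is right. For the energy $H$ the paper instead uses the multiplier $\partial_{t}$, i.e.\ it computes $\mathrm{Re}\int\{(\partial_{t}u\times\overline{(\ref{ssu})})+(\partial_{t}\overline{v}\times(\ref{ssv}))+(\partial_{t}\overline{w}\times(\ref{ssw}))\}\,dx$ and checks by hand that the nonlinear contributions from the kinetic part cancel against the time derivative of the interaction term $2\,\mathrm{Re}(w,\nabla(u\cdot\overline{v}))_{L^{2}_{x}}$ --- this is precisely the ``brute force'' alternative you sketch at the end. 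Your primary route, verifying that $H$ is the Hamiltonian (the three functional-derivative identities, which I checked and which are correct, including the sign bookkeeping coming from $2\,\mathrm{Re}(w,\nabla(u\cdot\overline{v}))_{L^{2}_{x}}=-2\,\mathrm{Re}\int(\nabla\cdot w)(v\cdot\overline{u})\,dx$) and then concluding $\tfrac{d}{dt}H=2\,\mathrm{Re}\int i(|\partial_{t}u|^{2}+|\partial_{t}v|^{2}+|\partial_{t}w|^{2})\,dx=0$, is a genuinely cleaner organization: it trades the term-by-term cancellation for a one-time verification of the variational structure, and it explains \emph{why} the specific coefficients $\alpha,\beta,\gamma$ and the relative sign between the two divergence-form nonlinearities make the energy conserved. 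The two arguments involve the same integrations by parts and the same decay assumptions, so nothing is lost; your version is, if anything, more informative than the paper's one-line prescription.
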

\begin{proof}
For the system
\begin{align}
&(i\partial_{t}+\alpha \Delta )u=-(\nabla \cdot w )v\label{ssu}\\
&(i\partial_{t}+\beta \Delta )v=-(\nabla \cdot \overline{w})u\label{ssv}\\
&(i\partial_{t}+\gamma \Delta )w =\nabla (u\cdot \overline{w}) \label{ssw}, 
\end{align}
We have the conservation law for $M$ by calculating 
\[
{\rm Im}\int_{\R^{d}}\{ (-2u\times \overline{{\rm (\ref{ssu})})}+(\overline{v}\times {\rm (\ref{ssv})})+(\overline{w} \times {\rm (\ref{ssw})})\}dx
\]
and for $H$ by calculating
\[
{\rm Re}\int_{\R^{d}}\{ (\partial_{t}u\times \overline{{\rm (\ref{ssu})}})+(\partial_{t}\overline{v}\times {\rm (\ref{ssv})})+(\partial_{t}\overline{w} \times {\rm (\ref{ssw})})\}dx .
\]
\end{proof}
The following a priori estimates imply Theorem~\ref{global_extend}. 
\begin{prop}\label{apriori_est}
We assume $\alpha$, $\beta$ and $\gamma$ have the same sign and put 
\[
\rho_{max}:= \max \{|\alpha |, |\beta |, |\gamma |\},\ \rho_{min}:=\min \{|\alpha |, |\beta |, |\gamma |\} . 
\]
{\rm (i)} Let $d=1$, $2$. For the data $(u_{0}, v_{0}, w_{0})\in H^{1}\times H^{1}\times H^{1}$ satisfying
\begin{equation}\label{apriori_condi_1}
M_{0}^{1-d/4}\ll \rho_{min}, 
\end{equation}
there exists $C>0$ such that for the solution $(u,v,w)\in \left(C([0,T];H^{1})\right)^{3}$ of {\rm{(\ref{NLS_sys})}}, the following estimate holds: 
\begin{equation}\label{apriori_1}
\sup_{0\leq t\leq T}\left( ||\nabla u(t)||_{L^{2}_{x}}^{2}+||\nabla v(t)||_{L^{2}_{x}}^{2}+||\nabla w(t) ||_{L^{2}_{x}}^{2}\right)\leq \frac{H_{0}+CM_{0}^{1-d/4}}{\rho_{min}-CM_{0}^{1-d/4}}. 
\end{equation} 
{\rm (ii)} Let $d=3$. If the data $(u_{0}, v_{0}, w_{0})\in H^{1}\times H^{1}\times H^{1}$ satisfies 
\begin{equation}\label{apriori_condi_2}
||\nabla u_{0}||_{L^{2}_{x}}^{2}+||\nabla v_{0}||_{L^{2}_{x}}^{2}+||\nabla w_{0}||_{L^{2}_{x}}^{2}
< \epsilon^{2}/\rho_{max}
\end{equation}
for some $\epsilon$ with $0<\epsilon \ll 1$, then for the solution $(u,v,w)\in \left(C([0,T];H^{1})\right)^{3}$ of {\rm{(\ref{NLS_sys})}}, the following estimate holds: 
\begin{equation}\label{apriori_2}
\sup_{0\leq t\leq T}\left(||\nabla u(t)||_{L^{2}_{x}}^{2}+||\nabla v(t)||_{L^{2}_{x}}^{2}+||\nabla w(t) ||_{L^{2}_{x}}^{2}\right)<3\epsilon^{2}/\rho_{\min}.
\end{equation}
\end{prop}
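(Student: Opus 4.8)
The plan is to extract coercivity of the $H^1$ norm from the conserved energy $H$ and absorb the single non-coercive term using the conserved mass $M$. Write $E(t):=\|\nabla u(t)\|_{L^2_x}^2+\|\nabla v(t)\|_{L^2_x}^2+\|\nabla w(t)\|_{L^2_x}^2$. Replacing $H$ by $-H$ if $\alpha,\beta,\gamma<0$, we may assume $\alpha,\beta,\gamma>0$, so that $\alpha\|\nabla u\|_{L^2_x}^2+\beta\|\nabla v\|_{L^2_x}^2+\gamma\|\nabla w\|_{L^2_x}^2\ge\rho_{\min}E(t)$. By Proposition~\ref{conservation} (extended to $H^1$ solutions by approximation with smooth ones), $H(u,v,w)=H_0$, which rearranges to
\[
\rho_{\min}E(t)\le H_0+\bigl|2\,{\rm Re}(w,\nabla(u\cdot\overline{v}))_{L^2_x}\bigr|.
\]

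To bound the cross term I would integrate by parts, use Cauchy--Schwarz, and then H\"older together with the Gagliardo--Nirenberg inequality $\|f\|_{L^4(\R^d)}\lesssim\|f\|_{L^2_x}^{1-d/4}\|\nabla f\|_{L^2_x}^{d/4}$ (valid for $1\le d\le4$):
\[
\bigl|2\,{\rm Re}(w,\nabla(u\cdot\overline{v}))_{L^2_x}\bigr|
=\bigl|2\,{\rm Re}(\nabla\cdot w,u\cdot\overline{v})_{L^2_x}\bigr|
\le 2\|\nabla w\|_{L^2_x}\|u\|_{L^4_x}\|v\|_{L^4_x}
\lesssim M_0^{1-d/4}E(t)^{\frac{1}{2}+\frac{d}{4}},
\]
where I used $\|u\|_{L^2_x}^2,\|v\|_{L^2_x}^2\le M_0$ (conservation of $M$) and AM--GM on the three gradient factors. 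With $C$ the resulting constant this gives $\rho_{\min}E(t)\le H_0+CM_0^{1-d/4}E(t)^{\frac{1}{2}+\frac{d}{4}}$.

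For $d=1,2$ we have $\tfrac{1}{2}+\tfrac{d}{4}\le1$, hence $E(t)^{\frac{1}{2}+\frac{d}{4}}\le E(t)+1$, and reading the hypothesis (\ref{apriori_condi_1}) as $CM_0^{1-d/4}<\rho_{\min}$ one absorbs $CM_0^{1-d/4}E(t)$ on the left to obtain (\ref{apriori_1}) directly, with no continuity argument. For $d=3$ we instead have $\tfrac{1}{2}+\tfrac{d}{4}=\tfrac{5}{4}>1$, so direct absorption fails; here I would run a bootstrap. Since $(u,v,w)\in C([0,T];H^1)^3$, the map $t\mapsto E(t)$ is continuous and $E(0)<\epsilon^2/\rho_{\max}\le3\epsilon^2/\rho_{\min}$. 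On the closed subset of $[0,T]$ where $E(t)\le3\epsilon^2/\rho_{\min}$, the estimate gives $\rho_{\min}E(t)\le H_0+CM_0^{1/4}(3\epsilon^2/\rho_{\min})^{1/4}E(t)$, so for $\epsilon$ small enough (depending on $M_0,\rho_{\min},\rho_{\max}$) we get $E(t)\le2H_0/\rho_{\min}$; and $H_0\le\rho_{\max}E(0)+CM_0^{1/4}E(0)^{5/4}<\epsilon^2+CM_0^{1/4}(\epsilon^2/\rho_{\max})^{5/4}<\tfrac{3}{2}\epsilon^2$ for $\epsilon$ small, so $E(t)<3\epsilon^2/\rho_{\min}$ \emph{strictly}. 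This open condition, combined with $E(0)$ lying strictly below the threshold, closes the bootstrap on all of $[0,T]$ and yields (\ref{apriori_2}).

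The main obstacle is the $d=3$ case: there the cross term scales like $E(t)^{5/4}$ and cannot be absorbed into the coercive part uniformly in $E(t)$, forcing one to exploit the smallness of the initial energy through the continuity/bootstrap scheme and to track carefully how small $\epsilon$ must be relative to $M_0$, $\rho_{\min}$, $\rho_{\max}$. A secondary technical point is justifying the conservation identities of Proposition~\ref{conservation} for merely $H^1$ solutions, which is handled by a standard density argument.
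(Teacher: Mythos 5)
Your proposal is correct and follows essentially the same route as the paper: coercivity of $H$ from the common sign of $\alpha,\beta,\gamma$, the Gagliardo--Nirenberg bound $|(\nabla\cdot w, u\cdot\overline{v})_{L^2_x}|\lesssim M_0^{1-d/4}E^{(d+2)/4}$, direct absorption for $d\le 2$, and a continuity/bootstrap argument for $d=3$ (the paper uses the threshold $4\epsilon^2/\rho_{\min}$ where you use $3\epsilon^2/\rho_{\min}$, a cosmetic difference). Your explicit remark on extending the conservation laws from smooth to $H^1$ solutions by density is a point the paper leaves implicit.
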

\begin{proof}
We put
\[
F=F(t):=||\nabla u(t)||_{L^{2}_{x}}^{2}+||\nabla v(t)||_{L^{2}_{x}}^{2}+||\nabla w(t)||_{L^{2}_{x}}^{2}.
\]
Since $\alpha$, $\beta$ and $\gamma$ are same sign, we have
\[
F\leq \frac{1}{\rho_{min}}(H(u,v,w)+2|( (\nabla \cdot w), (u\cdot \overline{v}))_{L^{2}_{x}}|). 
\]
By the Cauchy-Schwarz inequality and the Gagliardo-Nirenberg inequality we have
\[
\begin{split}
|( (\nabla \cdot w), (u\cdot \overline{v}))_{L^{2}_{x}}|)
&\leq ||\nabla \cdot w ||_{L^{2}_{x}}||u||_{L^{4}_{x}}||v||_{L^{4}_{x}}\\
&\lesssim ||\nabla \cdot w||_{L^{2}_{x}}||u||_{L^{2}_{x}}^{1-d/4}||\nabla u||_{L^{2}_{x}}^{d/4}||v||_{L^{2}_{x}}^{1-d/4}||\nabla v||_{L^{2}_{x}}^{d/4}\\
&\lesssim M(u,v,w)^{1-d/4}F^{(d+2)/4}
\end{split}
\]
for $d\leq 4$. Therefore, by using Proposition~\ref{conservation}, we obtain
\begin{equation}\label{F_est}
F\leq \frac{1}{\rho_{min}}\left(H_{0}+CM_{0}^{1-d/4}F^{(d+2)/4}\right)
\end{equation}
for some constant $C>0$. For $d\leq 2$ we have $F^{(d+2)/4}\leq 1+F$ 
because of $(d+2)/4\leq 1$. 
Therefore if (\ref{apriori_condi_1}) holds, then the estimate {\rm (\ref{apriori_1})} follows from {\rm (\ref{F_est})}. 

By the same argument as above, we obtain
\[
\begin{split}
H_{0}&\leq \rho_{max}F(0)+2|((\nabla \cdot w(0)), (u(0)\cdot \overline{v(0)}))_{L^{2}_{x}}|
\leq \rho_{max}F(0)+CM_{0}^{1-d/4}F(0)^{(d+2)/4}
\end{split}
\]
for some constant $C>0$ and $d\leq 4$. Therefore if (\ref{apriori_condi_2}) holds
for some $\epsilon$ with $0<\epsilon \ll 1$, we have
\[
H_{0}<\epsilon^{2}(1+CM_{0}^{1-d/4}\rho_{max}^{-(d+2)/4}\epsilon^{(d-2)/2}). 
\]
By choosing $\epsilon$ sufficiently small, we have 
$H_{0}<2\epsilon^{2}$
for $d=3$\ (and also $d=4$). Therefore the estimate 
\begin{equation}\label{F_est_2}
F\leq \frac{1}{\rho_{min}}\left(2\epsilon^{2}+CM_{0}^{1-d/4}F^{(d+2)/4}\right)
\end{equation}
follows from {\rm (\ref{F_est})}. If there exists $t_{0}\in [0, T]$ such that 
$F(t_{0})< 4\epsilon^{2}/\rho_{min}$ 
for sufficiently small $\epsilon$, then we have 
$F(t_{0})<3\epsilon^{2}/\rho_{min}$
by {\rm (\ref{F_est_2})}. 
Since $F(0)<\epsilon^{2}/\rho_{min}<4\epsilon^{2}/\rho_{min}$ and $F(t)$ is continuous with respect to $t$, 
we obtain {\rm (\ref{apriori_2})}. 
\end{proof}


\section{$C^{2}$-ill-posedness \label{proof_ill}}
In this section, we prove Theorem~\ref{notC2}. 
We rewrite Theorem~\ref{notC2} as follows: 
\begin{thm}Let $d\geq 1$, $0<T\ll 1$ and $\alpha$, $\beta$, $\gamma \in \R\backslash \{0\}$. 
We assume $s\in \R$ if $(\alpha -\gamma )(\beta +\gamma )=0$, $s<1$ if $\alpha \beta \gamma (1/\alpha -1/\beta -1/\gamma) =0$, 
and $s<1/2$ if $\alpha \beta \gamma (1/\alpha -1/\beta -1/\gamma) <0$. 
Then for every $C>0$ there exist $f$, $g\in H^{s}(\R^{d})$ such that
\begin{equation}\label{flow_map_estimate}
\sup_{0\leq t\leq T}\left|\left|\int_{0}^{t}e^{i(t-t')\gamma \Delta}\nabla ((e^{it'\alpha \Delta}f)(\overline{e^{it'\beta \Delta}g}))dt'\right|\right|_{H^{s}}
\geq C||f||_{H^{s}}||g||_{H^{s}}. 
\end{equation}
\end{thm}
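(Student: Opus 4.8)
The plan is to observe that the function inside the norm in \eqref{flow_map_estimate} is $I^{(2)}_{T,\gamma}(e^{it\alpha\Delta}f,\overline{e^{it\beta\Delta}g})(t)$ and, for each large dyadic $N$, to exhibit $f,g$ (with $\widehat f,\widehat g$ normalized indicators of carefully placed small boxes, and --- harmlessly --- only one nonzero component) for which the left side of \eqref{flow_map_estimate} exceeds $N^{\kappa(s)}\|f\|_{H^{s}}\|g\|_{H^{s}}$ with $\kappa(s)>0$ in the stated range; fixing $T$ and letting $N\to\infty$ then beats any $C$. First I would compute, for $t\le T$,
\[
\widehat{I}(t,\xi)=i\xi\,e^{-it\gamma|\xi|^{2}}\int_{\xi_{1}-\xi_{2}=\xi}\widehat f(\xi_{1})\,\overline{\widehat g(\xi_{2})}\Big(\int_{0}^{t}e^{it'\Phi(\xi_{1},\xi_{2})}\,dt'\Big)\,d\xi_{1},
\]
with resonance function $\Phi(\xi_{1},\xi_{2}):=\gamma|\xi_{1}-\xi_{2}|^{2}-\alpha|\xi_{1}|^{2}+\beta|\xi_{2}|^{2}=(\gamma-\alpha)|\xi_{1}|^{2}+(\gamma+\beta)|\xi_{2}|^{2}-2\gamma\,\xi_{1}\!\cdot\!\xi_{2}$. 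The mechanism: for a fixed small $\varepsilon_{0}>0$, if $|\Phi|\le\varepsilon_{0}/T$ on the support then $\operatorname{Re}\int_{0}^{t}e^{it'\Phi}\,dt'\ge t/2$ for all $t\le T$, so taking $\widehat f=|A|^{-1/2}\1_{A}\ge0$, $\widehat g=|B|^{-1/2}\1_{B}\ge0$ with $A\times B\subset\{|\Phi|\le\varepsilon_{0}/T\}$ removes all cancellation and gives $|\widehat I(t,\xi)|\ge\frac{t}{2}|\xi|\,|A|^{-1/2}|B|^{-1/2}|A\cap(\xi+B)|$. Evaluating the supremum at $t=T$ and restricting $\|I(T)\|_{H^{s}}^{2}\ge\int_{G}\LR{\xi}^{2s}|\widehat I(T,\xi)|^{2}d\xi$ to $G:=\{\xi:\,|A\cap(\xi+B)|\ge c\min(|A|,|B|)\}$ --- on which $|\xi|\sim N$ below, and for which $|G|\gtrsim\max(|A|,|B|)$ --- yields
\[
\|I(T)\|_{H^{s}}^{2}\gtrsim N^{2s}\Big(T N\,|A|^{-1/2}|B|^{-1/2}\min(|A|,|B|)\Big)^{2}\max(|A|,|B|),
\]
while $\|f\|_{H^{s}}\sim\LR{\xi_{1}^{0}}^{s}$, $\|g\|_{H^{s}}\sim\LR{\xi_{2}^{0}}^{s}$ for the box centres $\xi_{1}^{0},\xi_{2}^{0}$. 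So the whole problem is to put $A,B$ as large as possible inside the low-modulation slab $\{|\Phi|\le\varepsilon_{0}/T\}$ with $|\xi_{1}^{0}-\xi_{2}^{0}|\sim N$, keeping $\LR{\xi_{1}^{0}}^{s}\LR{\xi_{2}^{0}}^{s}$ small; the shape of $\Phi$ in each regime dictates this, and hence the threshold on $s$.

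If $(\alpha-\gamma)(\beta+\gamma)=0$, say $\alpha=\gamma$ (the case $\beta=-\gamma$ is symmetric under $f\leftrightarrow g$), then $\Phi=(\gamma+\beta)|\xi_{2}|^{2}-2\gamma\,\xi_{1}\!\cdot\!\xi_{2}$ is affine in $\xi_{1}$; this is what produces the full range $s\in\R$. For $d\ge2$ I would take $\xi_{1}^{0}=Ne_{2}$, $\xi_{2}^{0}=e_{1}$, with $A$ a unit cube about $\xi_{1}^{0}$ and $B$ a box about $\xi_{2}^{0}$ of width $\sim(NT)^{-1}$ along $e_{2}$ and $\sim1$ in the other directions, so that $|\Phi|\le\varepsilon_{0}/T$ on $A\times B$ (which forces $T$ small in terms of $|\beta+\gamma|$, permitted since $T\ll1$); for $d=1$ one instead takes $\xi_{1}^{0}=N$, $\xi_{2}^{0}\sim(NT)^{-1}\to0$, with $A,B$ intervals of lengths $\sim1$, $\sim(NT)^{-1}$. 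In all cases $\LR{\xi_{2}^{0}}\sim1$ (so $\|g\|_{H^{s}}\sim1$), $\LR{\xi_{1}^{0}}\sim N$, $|A|\sim1$, $|B|\sim(NT)^{-1}$, $|\xi_{1}^{0}-\xi_{2}^{0}|\sim N$, and the displayed bound gives $\|I(T)\|_{H^{s}}\gtrsim N^{s+1/2}T^{1/2}$, hence a ratio $\gtrsim N^{1/2}T^{1/2}\to\infty$ for every $s\in\R$.

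In the two remaining regimes $(\alpha-\gamma)(\beta+\gamma)\ne0$, and it is natural to complete the square. Let $\kappa$ be a root of $(\gamma-\alpha)\kappa^{2}-2\gamma\kappa+(\gamma+\beta)=0$ (a quadratic with discriminant $-4\theta$) and put $\zeta:=\xi_{1}-\kappa\xi_{2}$. If $\theta=0$ (double root $\kappa=\gamma/(\gamma-\alpha)\notin\{0,1\}$) then $\Phi=(\gamma-\alpha)|\zeta|^{2}$, so $\{|\Phi|\le\varepsilon_{0}/T\}$ contains a ball of radius $\sim T^{-1/2}$ in the variable $\zeta$: take $\xi_{2}^{0}=Ne_{1}$, $\xi_{1}^{0}=\kappa\xi_{2}^{0}$ and $A,B$ balls of radius $\sim T^{-1/2}$ about $\xi_{1}^{0},\xi_{2}^{0}$, so $|\xi_{1}^{0}|\sim|\xi_{2}^{0}|\sim|\xi_{1}^{0}-\xi_{2}^{0}|\sim N$, $\|f\|_{H^{s}}\|g\|_{H^{s}}\sim N^{2s}$, $|A|=|B|\sim T^{-d/2}$, and the bound gives $\|I(T)\|_{H^{s}}\gtrsim N^{s+1}T^{1-d/4}$, a ratio $\gtrsim N^{1-s}T^{1-d/4}\to\infty$ for $s<1$. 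If $\theta<0$ (distinct real roots $\kappa\ne\kappa'$, both nonzero, and one may take $\kappa\ne1$) then $\Phi=(\gamma-\alpha)\big(|\zeta|^{2}+(\kappa-\kappa')\,\zeta\!\cdot\!\xi_{2}\big)$; with $|\xi_{2}|\sim N$ the cross term forces $\zeta$ to have size only $\sim(NT)^{-1}$ along $e_{1}$ but permits size $\sim T^{-1/2}$ in the $d-1$ directions orthogonal to $e_{1}$. Taking $\xi_{2}^{0}=Ne_{1}$, $\xi_{1}^{0}=\kappa\xi_{2}^{0}$ and $A,B$ the corresponding boxes about $\xi_{1}^{0},\xi_{2}^{0}$ ($(NT)^{-1}$-thin along $e_{1}$, $T^{-1/2}$-fat transversally), one gets $|A|=|B|\sim T^{-(d+1)/2}N^{-1}$, whence $\|I(T)\|_{H^{s}}\gtrsim N^{s+1/2}T^{(3-d)/4}$ and a ratio $\gtrsim N^{1/2-s}T^{(3-d)/4}\to\infty$ for $s<1/2$.

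I expect the main obstacle to be exactly this last geometric step. Because $\widehat f$ and $\widehat g$ are chosen independently, the support one places in the low-modulation region must be a product $A\times B$, and a product of boxes cannot take advantage of a slab whose normal $\nabla\Phi$ is skew to both factor subspaces $\R^{d}_{\xi_{1}}$ and $\R^{d}_{\xi_{2}}$. The remedy is to pass to the resonant coordinate $\zeta=\xi_{1}-\kappa\xi_{2}$, in which $\Phi$ (for $|\xi_{2}|\sim N$) is a pure square in $\zeta$ when $\theta=0$ and a square plus a $\zeta$-linear term with coefficient of size $\sim N$ when $\theta<0$, and then to make $A,B$ thin precisely along the direction of that linear term and fat in the remaining directions, up to the scale $T^{-1/2}$ at which the quadratic part of $\Phi$ begins to contribute $\sim1/T$. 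The two thresholds are the two outcomes of this optimization --- a box of volume $\sim T^{-d/2}$ when $\theta=0$, versus $\sim T^{-(d+1)/2}N^{-1}$ when $\theta<0$ --- the extra factor $N^{-1}$ costing $N^{1/2}$ in the ratio, which is what lowers the threshold from $s<1$ to $s<1/2$. The remaining points --- that $\widehat f,\widehat g$ may be chosen non-negative, that $\operatorname{Re}\int_{0}^{t}e^{it'\Phi}\,dt'\ge t/2$, that $|\xi|\sim N$ on $G$ and $|G|\gtrsim\max(|A|,|B|)$, and that the small boxes remain inside $\{|\xi_{i}|\sim N\}$ for $N$ large --- are routine.
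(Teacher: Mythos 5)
Your proposal is correct and follows essentially the same route as the paper's Section 8: the Molinet--Saut--Tzvetkov-type construction with characteristic-function data supported on boxes adapted to the zero set of the resonance function, the case split driven by the factorization of $(\gamma-\alpha)\kappa^{2}-2\gamma\kappa+(\gamma+\beta)$ (your $\kappa,\kappa'$ are the paper's $M_{\pm}$ up to sign convention), and the same three exponents $N^{1/2}$, $N^{1-s}$, $N^{1/2-s}$. The only differences are parametric --- $L^{2}$-normalized indicators, box widths kept explicit in $T$, and anisotropic boxes in $\R^{d}$ instead of the paper's tensoring of the one-dimensional sets with $[0,1]^{d-1}$ --- none of which changes the thresholds.
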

\begin{proof}
We prove only for $d=1$. 
For $d\geq 2$, it is enough to replace $D_{1}$, $D_{2}$ and $D$ by $D_{1}\times [0,1]^{d-1}$, $D_{2}\times [0,1]^{d-1}$ and $D\times [1/2,1]^{d-1}$ 
in the following argument. 
We use the argument of the proof of Theorem 1 in \cite{MST01}.  
For the sets $D_{1}$, $D_{2}\subset \R$, we define the functions $f$, $g\in H^{s}(\R)$ as
\[
\widehat{f}(\xi )=\ee_{D_{1}}(\xi ),\ \widehat{g}(\xi )=\ee_{D_{2}}(\xi ).  
\]

First, we consider the case $(\alpha -\gamma )(\beta +\gamma )=0$. 
We assume $\alpha -\gamma =0$. (For the case $\beta +\gamma =0$ is proved by similar argument. ) 
We put $M:=-(\beta +\gamma)/2\gamma$, 
then we have
\[
\alpha |\xi_{1}|^{2}-\beta |\xi -\xi_{1}|^{2}-\gamma |\xi |^{2}
=2\gamma \{\xi_{1}-M(\xi -\xi_{1})\}(\xi -\xi_{1}). 
\]
For $N\gg 1$, we define the sets $D_{1}$, $D_{2}$ and $D\subset \R$ as
\[
D_{1}:=[N,\ N+N^{-1}],\ D_{2}:=[N^{-1},\ 2N^{-1}],\ D:= [N+3N^{-1}/2,\ N+2N^{-1}]
\]
Then, we have 
\[
||f||_{H^{s}}\sim N^{s-1/2},\ ||g||_{H^{s}}\sim N^{-1/2} ,\ |(\widehat{f}*\widehat{g})(\xi) |\gtrsim N^{-1}\ee_{D}(\xi ) 
\]
and
\[
\int_{0}^{t}e^{-it'(\alpha |\xi_{1}|^{2}-\beta |\xi -\xi_{1}|^{2}-\gamma |\xi |^{2})}
dt'\sim t
\]
for any $\xi \in D_{1}$ satisfying $\xi -\xi_{1}\in D_{2}$ and $0\leq t\ll 1$.   
This implies
\[
\sup_{0\leq t\leq T}\left|\left|\int_{0}^{t}e^{i(t-t')\gamma \Delta}\nabla ((e^{it'\alpha \Delta}f)(\overline{e^{it'\beta \Delta}g}))dt'\right|\right|_{H^{s}}
\gtrsim  N^{s-1/2} 
\]
Therefore we obtain {\rm (\ref{flow_map_estimate})} because $s-1/2>s-1$ for any $s\in \R$. 

Second, we consider the case $\alpha \beta \gamma (1/\alpha -1/\beta -1/\gamma)= 0$. 
We put $M:=\gamma /(\alpha -\gamma)$, 
then $M\neq -1$ since $\alpha \neq 0$ and we have
\[
\alpha |\xi_{1}|^{2}-\beta |\xi -\xi_{1}|^{2}-\gamma |\xi |^{2}
=(\alpha -\gamma )|\xi_{1}-M(\xi -\xi_{1})|^{2}. 
\]
For $N\gg 1$, we define the sets $D_{1}$, $D_{2}$ and $D\subset \R$ as
\[
D_{1}:=[N,\ N+1],\ D_{2}:=[N/M,\ N/M+1/|M|],\ D:=[(1+1/M)N+1/2,\ (1+1/M)N+1]. 
\]
Then, we have 
\[
||f||_{H^{s}}\sim N^{s},\ ||g||_{H^{s}}\sim N^{s} ,\ |(\widehat{f}*\widehat{g})(\xi) |\gtrsim \ee_{D}(\xi )
\]
and
\[
\int_{0}^{t}e^{-it'(\alpha |\xi_{1}|^{2}-\beta |\xi -\xi_{1}|^{2}-\gamma |\xi |^{2})}
dt'\sim t
\]
for any $\xi \in D_{1}$ satisfying $\xi -\xi_{1}\in D_{2}$ and $0\leq t\ll 1$. This implies
\[
\sup_{0\leq t\leq T}\left|\left|\int_{0}^{t}e^{i(t-t')\gamma \Delta}\nabla ((e^{it'\alpha \Delta}f)(\overline{e^{it'\beta \Delta}g}))dt'\right|\right|_{H^{s}}
\gtrsim  N^{s+1}
\]
Therefore we obtain {\rm (\ref{flow_map_estimate})} because $s+1>2s$ for any $s<1$. 

Finally, we consider the case $(\alpha -\gamma )(\beta +\gamma )\neq 0$ and $\alpha \beta \gamma (1/\alpha -1/\beta -1/\gamma)< 0$. We put
\[
M_{\pm}:=\frac{\gamma}{\alpha -\gamma}\pm \frac{1}{\alpha -\gamma }\sqrt{-\alpha \beta \gamma \left(\frac{1}{\alpha}-\frac{1}{\beta}-\frac{1}{\gamma}\right)}, 
\]
then $M_{\pm}\in \R$ and $M_{+}\neq M_{-}$ since $\alpha \beta \gamma (1/\alpha -1/\beta -1/\gamma)< 0$, and we have 
\[
\alpha |\xi_{1}|^{2}-\beta |\xi -\xi_{1}|^{2}-\gamma |\xi |^{2}
=(\alpha +\gamma )\{\xi_{1}-M_{+}(\xi -\xi_{1})\} \{\xi_{1}-M_{-}(\xi -\xi_{1})\}. 
\]
Because $M_{+}\neq M_{-}$, at least one of $M_{+}$ and $M_{-}$ is not equal to $-1$. 
We can assume $M_{+}\neq -1$ without loss of generality. 
For $N\gg 1$, we define the sets $D_{1}$, $D_{2}$ and $D\subset \R$ as
\[
\begin{split}
&D_{1}:=[N,\ N+N^{-1}],\ 
D_{2}:=[N/M_{+},\ N/M_{+}+N^{-1}/|M_{+}|],\\ 
&D:=[(1+1/M_{+})N+N^{-1}/2,\ (1+1/M_{+})N+N^{-1}]. 
\end{split}
\]
Then, we have 
\[
||f||_{H^{s}}\sim N^{s-1/2},\ ||g||_{H^{s}}\sim N^{s-1/2} ,\ |(\widehat{f}*\widehat{g})(\xi) |\gtrsim N^{-1}\ee_{D}(\xi )
\]
and
\[
\int_{0}^{t}e^{-it'(\alpha |\xi_{1}|^{2}-\beta |\xi -\xi_{1}|^{2}-\gamma |\xi |^{2})}
dt'\sim t
\]
for any $\xi \in D_{1}$ satisfying $\xi -\xi_{1}\in D_{2}$ and $0\leq t\ll 1$. This implies
\[
\sup_{0\leq t\leq T}\left|\left|\int_{0}^{t}e^{i(t-t')\gamma \Delta}\nabla ((e^{it'\alpha \Delta}f)(\overline{e^{it'\beta \Delta}g}))dt'\right|\right|_{H^{s}}
\gtrsim  N^{s-1/2}.
\]
Therefore we obtain {\rm (\ref{flow_map_estimate})} because $s-1/2>2s-1$ for any $s<1/2$. 
\end{proof}
%
%
\appendix
\section{Bilinear estimates for $1$D Bourgain norm \label{appendix}}
In this section, we give the bilinear estimates for the standard $1$-dimensional Bourgain norm   
under the condition 
$(\alpha-\gamma)(\beta+\gamma)\neq 0$ and $\alpha \beta \gamma (1/\alpha-1/\beta-1/\gamma)\neq 0$. 
Which estimates imply the well-posedness of (\ref{NLS_sys}) for $1>s\geq 1/2$ as the solution $(u,v,w)$ be in the
Bourgain space 
$X^{s}_{\alpha}([0,T])\times X^{s}_{\beta}([0,T])\times X^{s}_{\gamma}([0,T])$.   
\begin{lemm}\label{modul_est_1d}
Let $\sigma_{1}$, $\sigma_{2}$, $\sigma_{3} \in \R \backslash \{0\}$ satisfy $(\sigma_{2}+\sigma_{3})(\sigma_{3}+\sigma_{1})\neq 0$ 
and $(\tau_{1},\xi_{1})$, $(\tau_{2}, \xi_{2})$, $(\tau_{3}, \xi_{3})\in \R\times \R$ satisfy $\tau_{1}+\tau_{2}+\tau_{3}=0$, $\xi_{1}+\xi_{2}+\xi_{3}=0$. 
If there exist $1\leq i,j\leq 3$ such that $|\xi_{i}|\ll |\xi_{j}|$, then we have
\[
\max_{1\leq j\leq 3}|\tau_{j}+\sigma_{j} \xi_{j}^{2}|
\gtrsim \xi_{3}^{2}. 
\]
\end{lemm}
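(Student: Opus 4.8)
The plan is to follow the scheme of the proof of Lemma~\ref{modul_est}(i) — reduce to an algebraic lower bound via the resonance relation, then complete the square in one variable — the only new feature being that the weaker hypothesis $(\sigma_2+\sigma_3)(\sigma_3+\sigma_1)\neq0$ now permits $\sigma_1+\sigma_2=0$, a case that has to be handled separately.

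First I would use $\tau_1+\tau_2+\tau_3=0$ and the triangle inequality to get
\[
M_0:=\max_{1\le j\le 3}|\tau_j+\sigma_j\xi_j^2|\ \gtrsim\ |\sigma_1\xi_1^2+\sigma_2\xi_2^2+\sigma_3\xi_3^2|,
\]
so it suffices to bound the right-hand side below by $\xi_3^2$. The hypothesis $|\xi_i|\ll|\xi_j|$ for some $i,j$ says the smallest of $|\xi_1|,|\xi_2|,|\xi_3|$ is much smaller than the largest; since $\xi_1+\xi_2+\xi_3=0$ this forces the two largest frequencies to be comparable. Using the symmetry of the statement under the interchange $(\sigma_1,\xi_1,\tau_1)\leftrightarrow(\sigma_2,\xi_2,\tau_2)$ — which preserves both $(\sigma_2+\sigma_3)(\sigma_3+\sigma_1)\neq0$ and the quantities $M_0$, $\xi_3^2$ — I may assume $|\xi_1|\le|\xi_2|$, leaving two cases: (a) $|\xi_3|\ll|\xi_1|\sim|\xi_2|$, and (b) $|\xi_1|\ll|\xi_2|\sim|\xi_3|$.

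In case (b) I eliminate $\xi_2=-\xi_1-\xi_3$ to obtain
\[
\sigma_1\xi_1^2+\sigma_2\xi_2^2+\sigma_3\xi_3^2=(\sigma_2+\sigma_3)\xi_3^2+2\sigma_2\xi_1\xi_3+(\sigma_1+\sigma_2)\xi_1^2 ;
\]
since $|\xi_1|\ll|\xi_3|$ the last two terms are $o(\xi_3^2)$ (with the implied smallness controlled by the $\ll$-threshold and $\sigma_1,\sigma_2$), so the expression equals $(\sigma_2+\sigma_3)\xi_3^2+o(\xi_3^2)$, which is $\gtrsim\xi_3^2$ by $\sigma_2+\sigma_3\neq0$. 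In case (a) I eliminate $\xi_1=-\xi_2-\xi_3$ to obtain
\[
\sigma_1\xi_1^2+\sigma_2\xi_2^2+\sigma_3\xi_3^2=(\sigma_1+\sigma_2)\xi_2^2+2\sigma_1\xi_2\xi_3+(\sigma_1+\sigma_3)\xi_3^2 .
\]
If $\sigma_1+\sigma_2\neq0$ the leading term dominates because $|\xi_3|\ll|\xi_2|$, giving $\gtrsim\xi_2^2\ge\xi_3^2$. If $\sigma_1+\sigma_2=0$ I factor out $\xi_3$, writing the expression as $\xi_3\bigl(2\sigma_1\xi_2+(\sigma_1+\sigma_3)\xi_3\bigr)$; since $|\xi_2|\gg|\xi_3|$ the second factor has size $\sim|\xi_2|$, so the whole has size $\sim|\xi_2||\xi_3|\ge\xi_3^2$. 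Collecting the cases (and undoing the symmetry reduction) gives $M_0\gtrsim\xi_3^2$.

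I expect the only delicate point is the bookkeeping of which combination $\sigma_i+\sigma_j$ is used where: case (b) with smallest frequency $\xi_1$ needs $\sigma_2+\sigma_3\neq0$, the symmetric case with smallest frequency $\xi_2$ needs $\sigma_3+\sigma_1\neq0$, and case (a) requires nothing beyond the factorization trick for $\sigma_1+\sigma_2=0$. Thus $(\sigma_2+\sigma_3)(\sigma_3+\sigma_1)\neq0$ is exactly what is invoked, and — in contrast to Lemma~\ref{modul_est}(i) — the value of $\sigma_1+\sigma_2$ is irrelevant; the dependence of the implicit constant on $\sigma_1,\sigma_2,\sigma_3$ and on the $\ll$-threshold is harmless.
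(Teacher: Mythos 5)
Your proof is correct and follows essentially the same route as the paper: reduce via the triangle inequality to the resonance quantity $|\sigma_1\xi_1^2+\sigma_2\xi_2^2+\sigma_3\xi_3^2|$, treat $\sigma_1+\sigma_2\neq 0$ by the completing-the-square argument of Lemma~\ref{modul_est}(i), and handle $\sigma_1+\sigma_2=0$ by the factorization $\xi_3\bigl((\sigma_1+\sigma_3)\xi_3+2\sigma_1\xi_2\bigr)=\xi_3\bigl((\sigma_2+\sigma_3)\xi_3+2\sigma_2\xi_1\bigr)$, which is exactly what the paper does (the paper simply cites the earlier lemma for the nondegenerate case instead of re-expanding).
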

\begin{proof}For the case $\sigma_{1}+\sigma_{2}\neq 0$, proof was complete in Lemma~\ref{modul_est}. 
We assume $\sigma_{1}+\sigma_{2}= 0$. Then we have
\[
\begin{split}
M_{0}:=&\max\{|\tau_{1}+\sigma_{1}\xi_{1}^{2}|, |\tau_{2}+\sigma_{2}\xi_{2}^{2}|, |\tau_{3}+\sigma_{3}\xi_{3}^{2}|\}\\
&\gtrsim |\sigma_{1} \xi_{1}^{2}+\sigma_{2} \xi_{2}^{2}+\sigma_{3} \xi_{3}^{2}|\\
&=|\xi_{3}||(\sigma_{1} +\sigma_{3} )\xi_{3}+2\sigma_{1} \xi_{2}|\\
&=|\xi_{3}||(\sigma_{2} +\sigma_{3} )\xi_{3}+2\sigma_{2} \xi_{1}|
\end{split}
\]
by the triangle inequality. Therefore if $|\xi_{i}|\ll |\xi_{j}|$ for some $1\leq i,j\leq 3$, then we have $M_{0}\gtrsim \xi_{3}^{2}$. 
\end{proof}
\begin{lemm}\label{integral_est}
We assume $\sigma_{1}$, $\sigma_{2}$, $\sigma_{3}\in \R\backslash \{0\}$ satisfy $\theta :=\sigma_{1} \sigma_{2} \sigma_{3} (1/\sigma_{1}+1/\sigma_{2}+1/\sigma_{3})\neq 0$. 
For any $(\tau, \xi) \in \R\times \R$ with $|\xi|\geq 1$ and $b>1/2$, we have
\begin{equation}\label{integral_estimate}
\int_{\R}\int_{\R}\frac{d\tau_{1}d\xi_{1}}{\langle \tau_{1}+\sigma_{1}\xi_{1}^{2}\rangle^{2b}\langle \tau-\tau_{1} +\sigma_{2} (\xi-\xi_{1})^{2}\rangle^{2b}}
\lesssim
\langle (\sigma_{1}+\sigma_{2})(\tau -\sigma_{3}\xi^{2})+\theta \xi^{2}\rangle^{-1/2}
\end{equation}
and
\begin{equation}\label{integral_estimate_2}
\int_{|\xi_{1}|\gg |\xi -\xi_{1}|\ {\rm or}\ |\xi_{1}|\ll |\xi -\xi_{1}|}\int_{\R}\frac{d\tau_{1}d\xi_{1}}{\langle \tau_{1}+\sigma_{1}\xi_{1}^{2}\rangle^{2b}\langle \tau-\tau_{1} +\sigma_{2} (\xi-\xi_{1})^{2}\rangle^{2b}}
\lesssim
\langle \xi\rangle^{-1},
\end{equation}
where implicit constants in $\ll$ actually depend on $\sigma_{1}$, $\sigma_{2}$. 
\end{lemm}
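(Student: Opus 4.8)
The plan is to carry out the $\tau_1$-integration first and then reduce everything to elementary one-variable integrals in $\xi_1$ involving the resonance phase $\Psi(\xi_1):=\tau+\sigma_1\xi_1^2+\sigma_2(\xi-\xi_1)^2$. For the $\tau_1$-integral I would use the standard convolution bound: for $b>1/2$, $\int_\R\langle\tau_1-a\rangle^{-2b}\langle\tau_1-c\rangle^{-2b}\,d\tau_1\lesssim\langle a-c\rangle^{-2b}$, with implicit constant depending only on $b$. Applying it with $a=-\sigma_1\xi_1^2$ and $c=\tau+\sigma_2(\xi-\xi_1)^2$, so that $a-c=-\Psi(\xi_1)$, the left-hand sides of (\ref{integral_estimate}) and (\ref{integral_estimate_2}) are both dominated, up to constants, by $\int\langle\Psi(\xi_1)\rangle^{-2b}\,d\xi_1$ over the relevant $\xi_1$-set.

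For (\ref{integral_estimate}) I would expand $\Psi(\xi_1)=(\sigma_1+\sigma_2)\xi_1^2-2\sigma_2\xi\xi_1+\sigma_2\xi^2+\tau$ and distinguish two cases. If $\sigma_1+\sigma_2\neq0$, complete the square at $\xi_1^{*}=\sigma_2\xi/(\sigma_1+\sigma_2)$: then $\Psi(\xi_1)=(\sigma_1+\sigma_2)(\xi_1-\xi_1^{*})^2+A/(\sigma_1+\sigma_2)$, and since $\theta=\sigma_1\sigma_2+\sigma_2\sigma_3+\sigma_3\sigma_1$ a direct computation gives $(\sigma_1+\sigma_2)\bigl(\tau+\tfrac{\sigma_1\sigma_2}{\sigma_1+\sigma_2}\xi^2\bigr)=A$, where $A:=(\sigma_1+\sigma_2)(\tau-\sigma_3\xi^2)+\theta\xi^2$ is exactly the quantity inside the bracket on the right of (\ref{integral_estimate}). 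Rescaling $\xi_1=\xi_1^{*}+|\sigma_1+\sigma_2|^{-1/2}y$ reduces the $\xi_1$-integral to $\int_\R\langle cy^2+r\rangle^{-2b}\,dy$ with $c>0$ fixed and $|r|\sim\langle A\rangle$; the classical one-variable calculus estimate $\int_\R\langle y^2+r\rangle^{-2b}\,dy\lesssim\langle r\rangle^{-1/2}$ (valid for $2b>1$, proved by splitting into $|y|\lesssim\langle r\rangle^{1/2}$ and its complement, plus the neighbourhoods of $y=\pm\sqrt{-r}$ when $r<0$) then gives $\lesssim\langle A\rangle^{-1/2}$, as wanted. If instead $\sigma_1+\sigma_2=0$, then $\Psi$ is affine in $\xi_1$ with slope $2\sigma_1\xi$, so $\int\langle\Psi(\xi_1)\rangle^{-2b}\,d\xi_1\lesssim|\xi|^{-1}$, while the right-hand side of (\ref{integral_estimate}) equals $\langle\theta\xi^2\rangle^{-1/2}\sim|\xi|^{-1}$ because $|\xi|\geq1$ and $\theta\neq0$; so the estimate still holds.

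For (\ref{integral_estimate_2}) I would exploit that on the unbalanced region the phase is non-degenerate. On $\{|\xi_1|\gg|\xi-\xi_1|\}$ one has $|\xi_1|\sim|\xi|$, and on $\{|\xi_1|\ll|\xi-\xi_1|\}$ one has $|\xi-\xi_1|\sim|\xi|$; in either case, taking the implicit constant in $\gg$ large enough in terms of $\sigma_1,\sigma_2$, the derivative $\Psi'(\xi_1)=2\bigl(\sigma_1\xi_1-\sigma_2(\xi-\xi_1)\bigr)$ satisfies $|\Psi'(\xi_1)|\gtrsim|\xi|$ and keeps a constant sign on each connected piece. Hence $\Psi$ is a diffeomorphism there, and the substitution $u=\Psi(\xi_1)$ gives $\int\langle\Psi(\xi_1)\rangle^{-2b}\,d\xi_1\lesssim|\xi|^{-1}\int_\R\langle u\rangle^{-2b}\,du\lesssim|\xi|^{-1}\lesssim\langle\xi\rangle^{-1}$, using $2b>1$ and $|\xi|\geq1$.

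The argument is mostly routine $X^{s,b}$-calculus; the points needing care are the algebraic identity identifying the completed square with $A$ (where the definition of $\theta$ is essential), the separate and slightly degenerate case $\sigma_1+\sigma_2=0$ in (\ref{integral_estimate}), and checking that $\Psi$ is genuinely non-stationary on the unbalanced regions so that the change of variables in (\ref{integral_estimate_2}) is legitimate. I expect the sign bookkeeping in $\int_\R\langle y^2+r\rangle^{-2b}\,dy$ (the sub-case $r<0$, where the integrand is large near $y=\pm\sqrt{-r}$) to be the most error-prone part, though nothing there is conceptually difficult.
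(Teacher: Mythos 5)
Your proposal is correct and follows essentially the same route as the paper: both first integrate out $\tau_{1}$ via the standard convolution inequality (Lemma~2.3, (2.8) in the Kenig--Ponce--Vega paper cited there) to reduce to $\int\langle \tau+\sigma_{1}\xi_{1}^{2}+\sigma_{2}(\xi-\xi_{1})^{2}\rangle^{-2b}d\xi_{1}$, and then exploit the identity $(\sigma_{1}\xi_{1}-\sigma_{2}(\xi-\xi_{1}))^{2}=(\sigma_{1}+\sigma_{2})\mu-\theta\xi^{2}$ together with the nonvanishing of the derivative of the phase on the unbalanced region for (\ref{integral_estimate_2}). The only cosmetic difference is that you complete the square and rescale to the elementary bound $\int_{\R}\langle y^{2}+r\rangle^{-2b}dy\lesssim\langle r\rangle^{-1/2}$, where the paper instead substitutes $\mu=\sigma_{1}\xi_{1}^{2}+\sigma_{2}(\xi-\xi_{1})^{2}+\sigma_{3}\xi^{2}$ and cites (2.9) of the same lemma; the two computations are equivalent.
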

\begin{proof}
We put $I(\tau ,\xi ):=$(L.H.S of (\ref{integral_estimate})). 
By Lemma\ 2.3,\ (2.8) in \cite{KPV96}, we have
\[
I(\tau ,\xi )\lesssim \int_{\R}\frac{d\xi_{1}}{\langle \sigma_{1}\xi_{1}^{2}+\sigma_{2}(\xi -\xi_{1})^{2}+\sigma_{3}\xi^{2}+(\tau -\sigma_{3}\xi^{2}) \rangle^{2b}}. 
\]
We change the variable $\xi_{1}\mapsto \mu$ as $\mu =\sigma_{1}\xi_{1}^{2}+\sigma_{2}(\xi -\xi_{1})^{2}+\sigma_{3}\xi^{2}$, then we have  
\[
d\mu =2|\sigma_{1}\xi_{1}-\sigma_{2}(\xi -\xi_{1})|d\xi_{1}\sim \left|(\sigma_{1}+\sigma_{2})\mu -\theta \xi^{2}\right|^{1/2} d\xi_{1}. 
\]
Therefore if $\sigma_{1}+\sigma_{2}=0$, we obtain
\[
I(\tau ,\xi )\lesssim \frac{1}{|\xi |}\int_{\R}\frac{d\mu}{\langle \mu+(\tau -\sigma_{3}\xi^{2}) \rangle^{2b}}\lesssim \langle \xi \rangle^{-1} 
\]
for $b>1/2$ since $\theta \neq 0$ and $|\xi |\geq 1$. While if $\sigma_{1}+\sigma_{2}\neq 0$, we obtain
\[
I(\tau ,\xi )\lesssim \int_{\R}\frac{d\mu}{\langle \mu+(\tau -\sigma_{3}\xi^{2}) \rangle^{2b}\left| (\sigma_{1}+\sigma_{2})\mu -\theta \xi^{2}\right|^{1/2}}
\lesssim
\langle (\sigma_{1}+\sigma_{2})(\tau -\sigma_{3}\xi^{2})+\theta \xi^{2}\rangle^{-1/2}
\]
for $b>1/2$ by Lemma\ 2.3,\ (2.9) in \cite{KPV96}. The estimate (\ref{integral_estimate_2}) follows from
\[
d\mu =2|\sigma_{1}\xi_{1}-\sigma_{2}(\xi -\xi_{1})|d\xi_{1}\sim \max\{|\xi_{1}|,|\xi -\xi_{1}|\}d\xi_{1}\sim |\xi |d\xi_{1}
\]
when $|\xi_{1}|\gg |\xi -\xi_{1}|$ or $|\xi_{1}|\ll |\xi -\xi_{1}|$.
\end{proof}
\begin{prop}\label{Bourgain_be_prop}
We assume $d=1$ and $\sigma_{1}$, $\sigma_{2}$, $\sigma_{3}\in \R\backslash\{0\}$ satisfy 
$(\sigma_{1}+\sigma_{3})(\sigma_{2}+\sigma_{3})\neq 0$ and $\theta :=\sigma_{1} \sigma_{2} \sigma_{3} (1/\sigma_{1}+1/\sigma_{2}+1/\sigma_{3})\neq 0$. 
Then for $3/4\geq b>1/2$ and $1>s\geq 1/2$, we have 
\begin{align}
||(\partial_{x}u_{3})u_{2}||_{X^{s,b-1}_{-\sigma_{1}}}
&\lesssim ||u_{3}||_{X^{s,b}_{\sigma_{3}}}||u_{2}||_{X^{s,b}_{\sigma_{2}}},\label{Bourgain_be_1}\\
||\partial_{x}(u_{1}u_{2})||_{X^{s,b-1}_{-\sigma_{3}}}
&\lesssim ||u_{1}||_{X^{s,b}_{\sigma_{1}}}||u_{2}||_{X^{s,b}_{\sigma_{2}}},\label{Bourgain_be_2}
\end{align}
where
\[
||u||_{X^{s,b}_{\sigma}}:=||\langle \xi \rangle^{s}\langle \tau +\sigma \xi^{2}\rangle^{b}\widetilde{u}||_{L^{2}_{\tau \xi}}. 
\]
\end{prop}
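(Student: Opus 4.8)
The plan is to prove \eqref{Bourgain_be_1} and \eqref{Bourgain_be_2} together by duality and Plancherel, since the two resonance functions both equal $\sigma_{1}\xi_{1}^{2}+\sigma_{2}\xi_{2}^{2}+\sigma_{3}\xi_{3}^{2}$ (with $\xi_{1}+\xi_{2}+\xi_{3}=0$), only the interpretation of which frequency is the output differing; I will carry out \eqref{Bourgain_be_2}, where $\xi_{3}=-\xi$ is the output frequency and $\partial_{x}$ produces the factor $|\xi|=|\xi_{3}|$. Setting $F_{j}:=\langle\xi_{j}\rangle^{s}\langle\tau_{j}+\sigma_{j}\xi_{j}^{2}\rangle^{b}\widetilde{u_{j}}$ so that $\|F_{j}\|_{L^{2}_{\tau\xi}}=\|u_{j}\|_{X^{s,b}_{\sigma_{j}}}$, and pairing against $h\in L^{2}_{\tau\xi}$, the estimate \eqref{Bourgain_be_2} becomes the trilinear bound
\[
\left|\int_{*}\frac{\langle\xi\rangle^{s}|\xi|\,h(\tau,\xi)F_{1}(\tau_{1},\xi_{1})F_{2}(\tau_{2},\xi_{2})}{\langle\xi_{1}\rangle^{s}\langle\xi_{2}\rangle^{s}\langle\tau-\sigma_{3}\xi^{2}\rangle^{1-b}\langle\tau_{1}+\sigma_{1}\xi_{1}^{2}\rangle^{b}\langle\tau_{2}+\sigma_{2}\xi_{2}^{2}\rangle^{b}}\right|\lesssim\|F_{1}\|_{L^{2}}\|F_{2}\|_{L^{2}}\|h\|_{L^{2}},
\]
where $*$ imposes $\tau=\tau_{1}+\tau_{2}$, $\xi=\xi_{1}+\xi_{2}$. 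Applying the Cauchy--Schwarz inequality first in $(\tau_{1},\xi_{1})$ and then in $(\tau,\xi)$, and using that $\int|F_{1}(\tau_{1},\xi_{1})F_{2}(\tau-\tau_{1},\xi-\xi_{1})|^{2}\,d\tau_{1}\,d\xi_{1}\,d\tau\,d\xi=\|F_{1}\|_{L^{2}}^{2}\|F_{2}\|_{L^{2}}^{2}$, this reduces to the pointwise kernel bound
\[
\sup_{\tau\in\R,\ |\xi|\ge1}\ \frac{\langle\xi\rangle^{2s}|\xi|^{2}}{\langle\tau-\sigma_{3}\xi^{2}\rangle^{2(1-b)}}\iint_{\R^{2}}\frac{d\tau_{1}\,d\xi_{1}}{\langle\xi_{1}\rangle^{2s}\langle\xi-\xi_{1}\rangle^{2s}\langle\tau_{1}+\sigma_{1}\xi_{1}^{2}\rangle^{2b}\langle\tau-\tau_{1}+\sigma_{2}(\xi-\xi_{1})^{2}\rangle^{2b}}\lesssim1,
\]
together with its elementary counterpart for $|\xi|\le1$, where $|\xi|^{2}\le1$ and, in the only delicate subregion $|\xi_{1}|\sim|\xi-\xi_{1}|\gg1$, either the resonance identity forces $\max_{j}\langle\tau_{j}+\sigma_{j}\xi_{j}^{2}\rangle\gtrsim|\xi_{1}|^{2}$ (when $\sigma_{1}+\sigma_{2}\ne0$) or the change of variables $\xi_{1}\mapsto\sigma_{1}\xi_{1}^{2}+\sigma_{2}(\xi-\xi_{1})^{2}$ gains $|\xi|^{-1}$ (when $\sigma_{1}+\sigma_{2}=0$), so the gain is more than enough for $s\ge1/2$, $b>1/2$.

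To prove the kernel bound for $|\xi|\ge1$ I would split the $\xi_{1}$-integration. When $|\xi_{1}|\gg|\xi-\xi_{1}|$ or $|\xi_{1}|\ll|\xi-\xi_{1}|$ one has $\langle\xi_{1}\rangle^{-2s}\langle\xi-\xi_{1}\rangle^{-2s}\lesssim\langle\xi\rangle^{-2s}$; after factoring this out, \eqref{integral_estimate_2} bounds the remaining integral by $\langle\xi\rangle^{-1}$, while Lemma~\ref{modul_est_1d} gives $\max_{j}\langle\tau_{j}+\sigma_{j}\xi_{j}^{2}\rangle\gtrsim\xi^{2}$; splitting according to which of the three modulations realizes this maximum and using $2(1-b)\ge1/2$ when it is the output modulation (here $b\le3/4$ enters) or $2b>1$ otherwise then absorbs the surplus factor $\langle\xi\rangle^{2s}|\xi|^{2}$. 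When $|\xi_{1}|\sim|\xi-\xi_{1}|$ I would instead invoke Lemma~\ref{integral_est}: the inner integral is $\lesssim\langle(\sigma_{1}+\sigma_{2})(\tau-\sigma_{3}\xi^{2})+\theta\xi^{2}\rangle^{-1/2}$, and $\lesssim\langle\xi\rangle^{-1}$ when $\sigma_{1}+\sigma_{2}=0$ --- this is exactly the step that uses the non-degeneracy $\theta\ne0$. If additionally $|\xi_{1}|\sim|\xi|$ this gain combined with $\langle\tau-\sigma_{3}\xi^{2}\rangle^{-2(1-b)}$, $b\le3/4$, beats $\langle\xi\rangle^{2s}|\xi|^{2}$ (the exponent arithmetic being borderline precisely at $s=1/2$, $b=3/4$); if $|\xi_{1}|\sim|\xi-\xi_{1}|=:N\gg|\xi|$ the extra factor $\langle\xi_{1}\rangle^{-2s}\langle\xi-\xi_{1}\rangle^{-2s}\sim N^{-4s}$ is what pays for the derivative, and $s\ge1/2$ is again the borderline, with the endpoint $s=1/2$ producing a logarithm in the $\xi_{1}$-integral that is reabsorbed by the modulation weights.

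For \eqref{Bourgain_be_1} one repeats the argument with $\xi_{3}$ now the derivative-carrying input frequency and $\xi_{1}=-\xi$ the output; since the resonance function, and hence the hypothesis $(\sigma_{1}+\sigma_{3})(\sigma_{2}+\sigma_{3})\ne0$ required by Lemma~\ref{modul_est_1d} and the hypothesis $\theta\ne0$ required by Lemma~\ref{integral_est} (after a cyclic relabeling of its integration index), are unchanged, nothing new is needed. The main obstacle is the high-high-to-low interaction in the presence of $\partial_{x}$: there the output modulation offers no direct help and the two high-frequency Sobolev weights $\langle\xi_{i}\rangle^{-s}$ must carry the whole burden, which is precisely why the argument is confined to $s\ge1/2$; the constraint $1/2<b\le3/4$ arises from balancing the output-modulation weight $\langle\tau-\sigma_{3}\xi^{2}\rangle^{-2(1-b)}$ against $\langle\xi\rangle$ in the resonant regime while keeping the $\tau$-integrations (and the application of the calculus lemma from \cite{KPV96} inside Lemma~\ref{integral_est}) convergent.
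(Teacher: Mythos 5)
Your proposal is correct and follows essentially the same route as the paper: the Cauchy--Schwarz reduction to the pointwise kernel bound on $I(\tau,\xi)$, the use of Lemma~\ref{integral_est} (where $\theta\neq 0$ enters) in the frequency-balanced regime and of Lemma~\ref{modul_est_1d} together with \eqref{integral_estimate_2} (where $(\sigma_{1}+\sigma_{3})(\sigma_{2}+\sigma_{3})\neq 0$ enters) in the unbalanced regime, and the same exponent arithmetic pinning down $s\geq 1/2$ and $1/2<b\leq 3/4$. The only difference is organizational --- you split first by frequency configuration where the paper splits its regions $S_{1}$, $S_{2}$, $S_{3}$ by output-frequency size and modulation size --- and the cases match up after relabeling, so nothing further is needed.
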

\begin{proof}
We prove only (\ref{Bourgain_be_2}) since the proof of (\ref{Bourgain_be_1}) is similar. 
By the Cauchy-Schwarz inequality, we have
\[
||\partial_{x}(u_{1}u_{2})||_{X^{s,b-1}_{-\sigma_{3}}}
\lesssim ||I||_{L^{\infty}_{\tau \xi}}||u_{1}||_{X^{s,b}_{\sigma_{1}}}||u_{2}||_{X^{s,b}_{\sigma_{2}}},
\]
where
\[
I(\tau , \xi ):=\left(\frac{\langle \xi \rangle^{2s}|\xi|^{2}}{\langle \tau -\sigma_{3} \xi^{2}\rangle^{2(1-b)}}
\int_{\R}\int_{\R}\frac{\langle \xi_{1}\rangle^{-2s}\langle \xi -\xi_{1}\rangle^{-2s}}{\langle \tau_{1}+\sigma_{1}\xi_{1}^{2}\rangle^{2b}\langle \tau-\tau_{1} +\sigma_{2} (\xi-\xi_{1})^{2}\rangle^{2b}}d\tau_{1}d\xi_{1}\right)^{1/2}. 
\]
It is enough to prove $I(\tau,\xi)\lesssim 1$ for $|\xi|\geq 1$. 
For fixed $(\tau ,\xi )\in \R\times \R$, we divide $\R\times \R$ into three regions $S_{1}$, $S_{2}$, $S_{3}$ as
\[
\begin{split}
S_{1}&:=\{(\tau_{1},\xi_{1})\in \R\times \R|\ |\xi|\ll |\xi_{1}|\}\\
S_{2}&:=\{(\tau_{1},\xi_{1})\in \R\times \R|\ |\xi|\gtrsim |\xi_{1}|,\ 
\max\{|\tau_{1}+\sigma_{1}\xi_{1}^{2}|, |\tau-\tau_{1}+\sigma_{2}(\xi -\xi_{1})^{2}|\}\gtrsim \xi^{2}\}\\
S_{3}&:=\{(\tau_{1},\xi_{1})\in \R\times \R|\ |\xi|\gtrsim |\xi_{1}|,\ 
\max\{|\tau_{1}+\sigma_{1}\xi_{1}^{2}|, |\tau-\tau_{1}+\sigma_{2}(\xi -\xi_{1})^{2}|\}\ll \xi^{2}\}
\end{split}
\]

First, we consider the region $S_{1}$. 
For any $(\tau_{1},\xi_{1})\in S_{1}$, we have
\[
\langle \xi \rangle^{2s}|\xi|^{2}\langle \xi_{1}\rangle^{-2s}\langle \xi -\xi_{1}\rangle^{-2s}\lesssim \langle \xi \rangle^{2-2s}
\]
because $|\xi|\ll |\xi_{1}|\sim |\xi -\xi_{1}|$. 
Therefore, we have
\[
\begin{split}
I(\tau ,\xi )
&\lesssim \left(\frac{\langle \xi \rangle^{2-2s}}{\langle \tau -\sigma_{3} \xi^{2}\rangle^{2(1-b)}\langle (\sigma_{1}+\sigma_{2})(\tau -\sigma_{3}\xi^{2})+\theta\xi^{2}
\rangle^{1/2}}\right)^{1/2}
\end{split}
\]
for $b>1/2$ by (\ref{integral_estimate}). 
Because $\theta \neq 0$, 
\[
\xi ^{2}=\frac{1}{\theta}\left\{(\sigma_{1}+\sigma_{2})(\tau -\sigma_{3}\xi ^{2})+\theta \xi ^{2}-(\sigma_{1}+\sigma_{2})(\tau -\sigma_{3}\xi ^{2})\right\}. 
\]
Therefore we obtain 
\[
\begin{split}
I(\tau, \xi )&\lesssim \left(\frac{1}{\langle \tau -\sigma_{3}\xi ^{2}\rangle^{s-(2b-1)}\langle (\sigma_{1}+\sigma_{2})(\tau -\sigma_{3}\xi ^{2})+\theta \xi^{2}\rangle^{1/2}}\right.\\
&\ \ \ \ \ \ \ \ \ \ \ \ \ \ \ \ \ \ \ \ \left.+\frac{1}{\langle \tau -\sigma_{3} \xi ^{2}\rangle^{2(1-b)}\langle(\sigma_{1}+\sigma_{2})(\tau -\sigma_{3}\xi ^{2})+\theta \xi^{2}\rangle^{s-1/2}}\right)^{1/2}\\
&\lesssim 1
\end{split}
\]
for $3/4\geq b>1/2$ and $1>s\geq 1/2$. 

Second, we consider the region $S_{2}$. 
We assume $|\tau-\tau_{1}+\sigma_{2}(\xi -\xi_{1})^{2}|\gtrsim \xi^{2}$ $(\gtrsim |\xi|^{1/b}|\xi_{1}|^{1-1/2b}|\xi -\xi_{1}|^{1-1/2b})$ 
since for the case $|\tau_{1}+\sigma_{1}\xi_{1}^{2}|\gtrsim \xi^{2}$ is same argument. 
Then, we have
\[
I(\tau , \xi )\lesssim \left(\frac{\langle \xi \rangle^{2s}}{\langle \tau -\sigma_{3} \xi^{2}\rangle^{2(1-b)}}
\int_{\R}\int_{\R}\frac{\langle \xi_{1}\rangle^{1-2b-2s}\langle \xi -\xi_{1}\rangle^{1-2b-2s}}{\langle \tau_{1}+\sigma_{1}\xi_{1}^{2}\rangle^{2b}}d\tau_{1}d\xi_{1}\right)^{1/2}. 
\]
Because
\[
\int_{\R}\frac{d\tau_{1}}{\langle \tau_{1}+\sigma_{1}\xi_{1}^{2}\rangle^{2b}}\lesssim 1
\]
for $b>1/2$, we obtain
\[
\begin{split}
I(\tau , \xi )&\lesssim \left(\frac{\langle \xi \rangle^{2s}}{\langle \tau -\sigma_{3}\xi^{2}\rangle^{2(1-b)}}\int_{\R}\frac{d\xi_{1}}{\langle \xi_{1}\rangle^{2s+2b-1}\langle \xi -\xi_{1}\rangle^{2s+2b-1}}\right)^{1/2}\\
&\lesssim \left(\frac{1}{\langle \tau -\sigma_{3} \xi ^{2}\rangle^{2(1-b)}\langle \xi \rangle^{2b-1}}\right)^{1/2}\\
&\lesssim 1
\end{split}
\]
for $1\geq b>1/2$ and $s\geq 1/2$ by Lemma\ 2.3,\ (2.8) in \cite{KPV96}. 

Finally, we consider the region $S_{3}$.
To begin with, we consider the case $|\tau -\sigma_{3}\xi^{2}|\gtrsim \xi^{2}$. 
Then we have
\[
\frac{\langle \xi \rangle^{2s}|\xi |^{2}}{\langle \tau -\sigma_{3} \xi^{2}\rangle^{2(1-b)}}\langle \xi_{1}\rangle^{-2s}\langle \xi -\xi_{1}\rangle^{-2s}
\lesssim 
\begin{cases}
\langle \xi\rangle^{4b-2-2s}\ \ {\rm if}\ \ |\xi_{1}|\sim |\xi -\xi_{1}|\\
\langle \xi\rangle^{4b-2}\ \ {\rm if}\ \ |\xi_{1}|\gg |\xi -\xi_{1}|\ {\rm or}\ |\xi_{1}|\ll |\xi -\xi_{1}|
\end{cases}
\]
since $|\xi |\sim \max\{|\xi_{1}|,|\xi -\xi_{1}|\}$ for any $(\tau,\xi)\in S_{3}$. Therefore we obtain
\[
\begin{split}
I(\tau ,\xi )&\lesssim \left(\frac{\langle \xi\rangle^{4b-2-2s}}{\langle (\sigma_{1}+\sigma_{2})(\tau -\sigma_{3}\xi^{2})+\theta\xi^{2}
\rangle^{1/2}}+\langle \xi\rangle^{4b-3}\right)^{1/2}\lesssim 1
\end{split}
\]
for $3/4\geq b>1/2$ and $s\geq 1/2$ by (\ref{integral_estimate}) and (\ref{integral_estimate_2}). 
Next, we consider the case $|\tau -\sigma_{3}\xi^{2}|\ll \xi^{2}$. Because $(\sigma_{1}+\sigma_{3})(\sigma_{2}+\sigma_{3})\neq 0$ and
\[
\max\{|\tau_{1}+\sigma_{1}\xi_{1}^{2}|,|\tau -\tau_{1}+\sigma_{2}(\xi -\xi_{1})^{2}|,|\tau-\sigma_{3}\xi^{2}|\}\ll \xi^{2}, 
\]
we have $|\xi|\sim |\xi -\xi_{1}|\sim |\xi_{1}|$ by Lemma~\ref{modul_est_1d}. Therefore, we have
\[
I(\tau ,\xi )
\lesssim \left(\frac{\langle \xi \rangle^{2-2s}}{\langle \tau -\sigma_{3} \xi^{2}\rangle^{2(1-b)}\langle(\sigma_{1}+\sigma_{2})(\tau -\sigma_{3}\xi^{2})+\theta\xi^{2}
\rangle^{1/2}}\right)^{1/2}
\]
for $b>1/2$ by (\ref{integral_estimate}). 
Because $\theta \neq 0$ and $|\tau-\sigma_{3}\xi^{2}|\ll \xi^{2}$, we have
\[
I(\tau ,\xi )\lesssim \left(\frac{\langle \xi \rangle^{1-2s}}{\langle \tau -\sigma_{3} \xi^{2}\rangle^{2(1-b)}}\right)^{1/2}\lesssim 1
\]
for $1\geq b>1/2$ and $s\geq 1/2$. 
\end{proof}
\begin{cor}\label{Bourgain_be_cor}
We assume $d=1$ and $\alpha$, $\beta$, $\gamma\in \R\backslash\{0\}$ satisfy $(\alpha -\gamma )(\beta +\gamma)\neq 0$ and $\alpha \beta \gamma (1/\alpha-1/\beta-1/\gamma)\neq 0$. Then for $3/4\geq b>1/2$ and $1>s\geq 1/2$, we have 
\begin{align}
||(\partial_{x}w)v||_{X^{s,b-1}_{\alpha}}&\lesssim ||w||_{X^{s,b}_{\gamma}}||v||_{X^{s,b}_{\beta}},\label{Bourgain_be_al}\\
||(\partial_{x}\overline{w})u||_{X^{s,b-1}_{\beta}}&\lesssim ||w||_{X^{s,b}_{\gamma}}||u||_{X^{s,b}_{\alpha}},\label{Bourgain_be_be}\\
||\partial_{x}(u\overline{v})||_{X^{s,b-1}_{\gamma}}&\lesssim ||u||_{X^{s,b}_{\alpha}}||v||_{X^{s,b}_{\beta}},\label{Bourgain_be_ga}.
\end{align}
\end{cor}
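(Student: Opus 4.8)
The plan is to obtain Corollary~\ref{Bourgain_be_cor} directly from Proposition~\ref{Bourgain_be_prop}, by relabelling the parameters $\sigma_{1}$, $\sigma_{2}$, $\sigma_{3}$ and exploiting the behaviour of the Bourgain norm under complex conjugation. First I would record the elementary identity $||\overline{u}||_{X^{s,b}_{\sigma}}=||u||_{X^{s,b}_{-\sigma}}$, which follows from $\widetilde{\overline{u}}(\tau,\xi)=\overline{\widetilde{u}(-\tau,-\xi)}$, Plancherel's theorem and the change of variables $(\tau,\xi)\mapsto(-\tau,-\xi)$, using $\langle -\tau+\sigma\xi^{2}\rangle=\langle \tau+(-\sigma)\xi^{2}\rangle$. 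This is the $X^{s,b}_{\sigma}$-analogue of the identity already noted for $U^{p}_{\sigma}$ and $V^{p}_{\sigma}$.

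Next I would check that, under the hypotheses $(\alpha-\gamma)(\beta+\gamma)\neq 0$ and $\alpha\beta\gamma(1/\alpha-1/\beta-1/\gamma)\neq 0$ of the corollary, each of (\ref{Bourgain_be_al})--(\ref{Bourgain_be_ga}) is a special case of Proposition~\ref{Bourgain_be_prop}. For (\ref{Bourgain_be_al}) I would apply (\ref{Bourgain_be_1}) with $(u_{3},u_{2})=(w,v)$ and $(\sigma_{1},\sigma_{2},\sigma_{3})=(-\alpha,\beta,\gamma)$, so that $-\sigma_{1}=\alpha$ and
\[
(\sigma_{1}+\sigma_{3})(\sigma_{2}+\sigma_{3})=-(\alpha-\gamma)(\beta+\gamma),\qquad \sigma_{1}\sigma_{2}\sigma_{3}(1/\sigma_{1}+1/\sigma_{2}+1/\sigma_{3})=\alpha\beta\gamma(1/\alpha-1/\beta-1/\gamma),
\]
whence both nonvanishing conditions in Proposition~\ref{Bourgain_be_prop} hold. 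For (\ref{Bourgain_be_be}) I would apply (\ref{Bourgain_be_1}) with $(u_{3},u_{2})=(\overline{w},u)$ and $(\sigma_{1},\sigma_{2},\sigma_{3})=(-\beta,\alpha,-\gamma)$, rewriting $||\overline{w}||_{X^{s,b}_{-\gamma}}=||w||_{X^{s,b}_{\gamma}}$; the two quantities above become $-(\alpha-\gamma)(\beta+\gamma)$ and $\alpha\beta\gamma(1/\alpha-1/\beta-1/\gamma)$ once more. For (\ref{Bourgain_be_ga}) I would apply (\ref{Bourgain_be_2}) with $(u_{1},u_{2})=(u,\overline{v})$ and $(\sigma_{1},\sigma_{2},\sigma_{3})=(\alpha,-\beta,-\gamma)$, rewriting $||\overline{v}||_{X^{s,b}_{-\beta}}=||v||_{X^{s,b}_{\beta}}$, and again the same two quantities reduce to $-(\alpha-\gamma)(\beta+\gamma)$ and $\alpha\beta\gamma(1/\alpha-1/\beta-1/\gamma)$. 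Collecting the three estimates finishes the proof.

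The corollary carries no genuine analytic difficulty: all the work sits in Proposition~\ref{Bourgain_be_prop} and Lemmas~\ref{modul_est_1d} and~\ref{integral_est}. The only point requiring care is the sign bookkeeping — getting the conjugation identity and the reductions of $(\sigma_{1}+\sigma_{3})(\sigma_{2}+\sigma_{3})$ and of $\sigma_{1}\sigma_{2}\sigma_{3}(1/\sigma_{1}+1/\sigma_{2}+1/\sigma_{3})$ correct in each of the three relabellings. It is worth emphasizing that in every case $\sigma_{1}\sigma_{2}\sigma_{3}(1/\sigma_{1}+1/\sigma_{2}+1/\sigma_{3})$ equals $\alpha\beta\gamma(1/\alpha-1/\beta-1/\gamma)$, so the hypothesis $\alpha\beta\gamma(1/\alpha-1/\beta-1/\gamma)\neq 0$ is precisely what is needed throughout, and $(\sigma_{1}+\sigma_{3})(\sigma_{2}+\sigma_{3})=\pm(\alpha-\gamma)(\beta+\gamma)$.
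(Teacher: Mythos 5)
Your proposal is correct and follows essentially the same route as the paper: both deduce the corollary from Proposition~\ref{Bourgain_be_prop} by relabelling $(\sigma_{1},\sigma_{2},\sigma_{3})$ and using the conjugation identity $||\overline{u}||_{X^{s,b}_{\sigma}}=||u||_{X^{s,b}_{-\sigma}}$, and you correctly check that in every case $(\sigma_{1}+\sigma_{3})(\sigma_{2}+\sigma_{3})=-(\alpha-\gamma)(\beta+\gamma)$ and $\sigma_{1}\sigma_{2}\sigma_{3}(1/\sigma_{1}+1/\sigma_{2}+1/\sigma_{3})=\alpha\beta\gamma(1/\alpha-1/\beta-1/\gamma)$. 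In fact, for (\ref{Bourgain_be_be}) your choice $(\sigma_{1},\sigma_{2},\sigma_{3})=(-\beta,\alpha,-\gamma)$ is the one that actually produces the target norm $X^{s,b-1}_{\beta}$ on the left and $X^{s,b}_{\alpha}$, $X^{s,b}_{\gamma}$ on the right; the assignment $(\alpha,-\beta,-\gamma)$ printed in the paper for that estimate appears to be a slip.
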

\begin{proof}
(\ref{Bourgain_be_al}) follows from (\ref{Bourgain_be_1}) with $(u_{2},u_{3})=(v,w)$ and $(\sigma_{1},\sigma_{2},\sigma_{3})=(-\alpha,\beta,\gamma)$. 
(\ref{Bourgain_be_be}) follows from (\ref{Bourgain_be_1}) with $(u_{2},u_{3})=(u,\overline{w})$ and $(\sigma_{1},\sigma_{2},\sigma_{3})=(\alpha,-\beta,-\gamma)$. 
(\ref{Bourgain_be_ga}) follows from (\ref{Bourgain_be_2}) with $(u_{1},u_{2})=(u,\overline{v})$ and $(\sigma_{1},\sigma_{2},\sigma_{3})=(\alpha,-\beta,-\gamma)$.
\end{proof}
Theorem~\ref{wellposed_2}\ {\rm (iii)} under the condition $1>s\geq 1/2$, $\theta =\alpha \beta \gamma (1/\alpha-1/\beta-1/\gamma) <0$ and $(\alpha -\gamma )(\beta +\gamma )\neq 0$ follows from  Lemma\ 2.1 in \cite{GTV97} and Corollary~\ref{Bourgain_be_cor}. 
\section*{acknowledgements}
The author would like to express his appreciation to Kotaro Tsugawa for many discussions and very valuable comments. 

\end{document}